\DeclareMathAlphabet{\can}{OT1}{cmss}{m}{n}
\newtheorem{thm}{ Theorem}[section]
\newtheorem{cor}[thm]{Corollary}
\newtheorem{lem}[thm]{Lemma}
\newtheorem{prop}[thm]{Proposition}
\newtheorem{rem}[thm]{Remark}
\numberwithin{equation}{section}
\newcommand{\mc}{\mathcal}
\newcommand{\dis}{\displaystyle}
\newcommand{\A}{\mathbb{A}}
\newcommand{\OO}{\mathcal{O}}
\newcommand{\bF}{\mathbb{F}}
\newcommand{\PP}{\mathbb{P}}
\newcommand{\mb}{\mathbb}
\begin{document}
\title[Average values of $L$-functions in even characteristic]
{Average values of $L$-functions in even characteristic}%

\author[S. Bae]{Sunghan Bae}
\address{\rm Department of Mathematics, KAIST, Daejon 305-701, Korea}
\email{shbae@kaist.ac.kr}

\author[H. Jung]{Hwanyup Jung}
\address{\rm Department of Mathematics Education, Chungbuk National University, Cheongju 361-763, Korea}
\email{hyjung@chungbuk.ac.kr}
\subjclass[2000]{11R11, 11R29, 11R42, 11R58}%
\keywords{L-functions, Class numbers, Quadratic function fields}%

\begin{abstract}
Let $k = \bF_{q}(T)$ be the rational function field over a finite field $\bF_{q}$, where $q$ is a power of $2$.
In this paper we solve the problem of averaging the quadratic $L$--functions $L(s, \chi_{u})$ over fundamental discriminants.
Any separable quadratic extension $K$ of $k$ is of the form $K = k(x_{u})$, where $x_{u}$ is a zero of $X^2+X+u=0$ for some $u\in k$.
We characterize the family $\mc I$ (resp. $\mc F$, $\mc F'$) of rational functions $u\in k$ such that
any separable quadratic extension $K$ of $k$ in which the infinite prime $\infty = (1/T)$ of $k$ ramifies (resp. splits, is inert)
can be written as $K = k(x_{u})$ with a unique $u\in\mc I$ (resp. $u\in\mc F$, $u\in\mc F'$).
For almost all $s\in\mb C$ with ${\rm Re}(s)\ge \frac{1}2$, we obtain the asymptotic formulas for the summation of $L(s,\chi_{u})$ over all $k(x_{u})$ with $u\in \mc I$,
all $k(x_{u})$ with $u\in \mc F$ or all $k(x_{u})$ with $u\in \mc F'$ of given genus.
As applications, we obtain the asymptotic mean value formulas of $L$-functions at $s=\frac{1}2$ and $s=1$
and the asymptotic mean value formulas of the class number $h_{u}$ or the class number times regulator $h_{u} R_{u}$.
\end{abstract}
\maketitle
\section{Introduction}
In Disquisitiones Arithmeticae \cite{Gauss}, Gauss presented two famous conjectures
concerning the average values of class numbers associated with binary quadratic forms over $\mb Z$,
which can be restated as the average values of class numbers of orders in quadratic number fields.
The imaginary case of these conjecture was first proved by Lipschitz, and the real case by Siegel~\cite{Si44}.
By the Dirichlet's class number formula, these two conjectures can be stated as averages of the values of
quadratic Dirichlet $L$-functions at $s=1$.
In~\cite{Si44}, Siegel showed how to average over all discriminants.
Let $\chi_{d}$ be the quadratic character defined by the Kronecker symbol $\chi_{d}(n) = (\frac{d}{n})$ and
$$
L(s,\chi_{d}) = \sum_{n=1}^{\infty} \chi_{d}(n) n^{-s}
$$
be the Dirichlet series associated to $\chi_{d}$.
Siegel \cite{Si44} has obtained averaging formulas for $L(1,\chi_{d})$ over all positive discriminants $d$ between $1$ and $N$,
or all negative discriminants $d$ such that $1<|d|\le N$.
From these averaging formulas with Dirichlet's class number formula,
Siegel has obtained averaging formulas for the class number $h_{d}$
or the class number times regulator $h_{d} R_{d}$ over all positive discriminants $d$ between $1$ and $N$,
or all negative discriminants $d$ such that $1<|d|\le N$.
At the critical point $s=\frac{1}2$, Jutila~\cite{Ju81} derived an asymptotic formula for
$$
\sum_{d} L^{k}(\tfrac{1}2, \chi_{d}) ~~(k=1,2),
$$
where $d$ runs over fundamental discriminants in the interval $0<d\le X$.
In \cite{GH85}, using the Eisenstein series of $\frac{1}2$-integral weight, for $s\in\mb C$ with ${\rm Re}(s)\ge 1$,
Goldfeld and Hoffstein obtained an asymptotic formula for $\sum L(s, \chi_{m})$,
where the sum is either over positive square-free $m$ between $1$ and $N$, or over negative square-free $m$ with $1<|m|\le N$.
Putting $s=1$ and using the Dirichlet's class number formula, one can average the class number $h_{m}$
or the class number times regulator $h_{m} R_{m}$ over the fields $\mb Q(\sqrt{m})$.

Now, we introduce the analogous results in function fields over finite fields.
Let $k = \bF_{q}(T)$ be the rational function field over a finite field $\bF_{q}$ of $q$ elements,
and $\A = \bF_{q}[T]$ be the ring of polynomials.
First, we consider the case of $q$ being odd.
Let $\chi_{N}$ be the quadratic character defined by the Kronecker symbol $\chi_{N}(f) = (\frac{N}{f})$ and
$$
L(s,\chi_{N}) = \sum_{\substack{f\in\A\\ f : \text{monic}}} \chi_{N}(f) |f|^{-s}
$$
be the quadratic Dirichlet $L$-function associated to $\chi_{N}$.
In \cite{HR92}, Hoffstein and Rosen has obtained a result on averaging $L(1,\chi_{N})$
over all non-square monic polynomials $N\in\A$ (all discriminants) of given degree.
Using the averaging of $L(1,\chi_{N})$ with the Dirichlet's class number formula, they solved the problem of
averaging the class number $h_{N}$ or the class number times regulator $h_{N} R_{N}$
over all non-square monic polynomials $N\in\A$ of given degree.
Moreover, by defining and analyzing metaplectic Eisenstein series in function fields,
Hoffstein and Rosen~\cite{HR92} also succeed in averaging $L(s,\chi_{N})$
over all square-free polynomials $N$ (fundamental discriminants) of given degree.
Andrade and Keating~\cite{AK12} and Jung~\cite{Ju13} have obtained asymptotic formulas for $\sum L(\tfrac{1}2,\chi_{N})$,
where the sum is over all monic square-free $N$ of given degree,
by elementary analytic method using approximate functional equation.
Recently, Andrade~\cite{An12} and Jung~\cite{Ju14} also have obtained asymptotic formulas for $\sum L(1,\chi_{N})$.

In the case of $q$ being even, Chen~\cite{Ch08} obtained formulas of average values of $L$-functions associated to
orders in quadratic function fields, and then derived formulas of average class numbers of these orders.
The aim of this paper is to solve the problem of averaging $L(s, \chi_{u})$ over fundamental discriminants in even characteristic.
Any separable quadratic extension $K$ of $k$ is of the form $K = k(x_{u})$,
where $x_{u}$ is a zero of $X^2+X+u=0$ for some $u\in k$.
We characterize three families $\mc F$, $\mc F'$ and $\mc I$ of rational functions $u\in k$
such that any separable quadratic extension $K$ of $k$ can be written uniquely as $K = K(x_{u})$,
where $u\in\mc F$, $u\in\mc F'$ or $u\in\mc I$ according as the infinite prime $\infty = (1/T)$ of $k$
splits, is inert or is ramified in $K$ (see \S2.2).
By extending the analytic methods in \cite{AK12, An12, AK13, AK14, Ju13, Ju14, RW15}, for almost all $s\in\mb C$ with ${\rm Re}(s)\ge \frac{1}2$,
we obtain the asymptotic formulas for the summation of $L(s,\chi_{u})$ over all $K_{u}$ with $u\in \mc F$,
all $K_{u}$ with $u\in \mc F'$ or all $K_{u}$ with $u\in \mc I$ of given genus (see Theorem \ref{mean-1}).
In \cite{AK12, An12, Ju13, Ju14}, the authors have obtained the asymptotic formulas for the summation of $L(s,\chi_{N})$ only at $s=\frac{1}2$ and $s=1$.
The asymptotic formulas for the summation of $L(s,\chi_{u})$ obtained in this paper hold for almost all $s\in\mb C$ with ${\rm Re}(s)\ge \frac{1}2$.
This can be regarded as an analogue of the result of Hoffstein and Rosen~\cite{HR92} in even characteristic.
The method of proving ``Approximate" functional equations of $L$-functions (Lemma \ref{E-A-FE}) in this paper also holds in odd characteristic case
and all results in \S \ref{S3-2} hold in odd characteristic too.
Thus our method can also be applied to obtain the asymptotic formulas for the summation of $L(s,\chi_{N})$
for almost all $s\in\mb C$ with ${\rm Re}(s)\ge \frac{1}2$ in odd characteristic.
The methods and calculations in \cite{An12, AK12, HR92} also can give the same asymptotic formulas for ${\rm Re}(s)\ge \frac{1}2$.
As applications, we obtain the asymptotic mean value formulas of $L$-functions at $s=\frac{1}2$ and $s=1$
(see Corollaries \ref{mean-1/2-corollary} and \ref{mean-1-corollary}),
and using the Dirichlet's class number formula, we also obtain the asymptotic mean value formulas
of the class number $h_{u}$ or the class number times regulator $h_{u} R_{u}$
(see Theorem \ref{mean-class-number} and Corollary \ref{mean-class-number-coro}).

\section{Statement of results}
\subsection{Some Background on $\A = \mathbb{F}_{q}[T]$}
Let $q$ be a power of $2$.
Let $k := \bF_{q}(T)$ be the rational function field with a constant field $\bF_{q}$,
$\infty = (1/T)$ the infinite prime of $k$, and $\A := \bF_{q}[T]$.
We denote by $\A^{+}$ the set of monic polynomials in $\A$
and by $\mb P$ the set of monic irreducible polynomials in $\A$.
Throughout this paper, any monic irreducible polynomial $P\in\mb{P}$ will be also called a ``prime" polynomial.
For a positive integer $n$ we denote by $\A_{n}^{+}$ the set of monic polynomials in $\A$ of degree $n$
and by $\mb P_{n}$ the set of monic irreducible polynomials in $\A$ of degree $n$.
The \textit{norm} $|f|$ of a polynomial $f\in\A$ is defined to be $|f|:= \#(\A/f\A) =q^{\deg(f)}$ for $f\neq0$,
and $|f|:=0$ for $f=0$, where $\# X$ denotes the cardinality of a set $X$.
For any $0 \ne f \in \A$, let $\Phi(f) := \#(\A/f\A)^{\times}$, and let $sgn(f)$ be the leading coefficient of $f$.
Let $\wp : k \to k$ be the additive homomorphism defined by $\wp(x) = x^2+x$.

The zeta function $\zeta_{\A}(s)$ of $\A$ is defined for $\mathrm{Re}(s)>1$ to be the following infinite series:
\begin{equation}\label{eq:zetaA}
\zeta_{\A}(s) := \sum_{f\in \A^{+}} \frac{1}{|f|^{s}}
= \prod_{P\in\mb P}\left(1-\frac{1}{|P|^{s}}\right)^{-1}, \quad \mathrm{Re}(s)>1.
\end{equation}
It is well known that $\zeta_{\A}(s)=\frac{1}{1-q^{1-s}}$.

The monic irreducible polynomials in $\A$ also satisfies the analogue of the Prime Number Theorem.
In other words we have the following.
\begin{thm}[Prime Polynomial Theorem] \label{thm:pnt}
Let $\pi_{\A}(n)$ denote the number of monic irreducible polynomials in $\A$ of degree $n$.
Then, we have
\begin{equation}
\pi_{\A}(n)=\#\mb{P}_{n}=\frac{q^{n}}{n}+O\left(\frac{q^{\tfrac{n}{2}}}{n}\right).
\end{equation}
\end{thm}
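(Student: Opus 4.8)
The plan is to exploit the two expressions for the zeta function of $\A$ and to extract the prime count by a logarithmic-derivative and M\"obius-inversion argument. First I would pass to the variable $u = q^{-s}$ and set $Z(u) := \zeta_{\A}(s) = \sum_{f\in\A^{+}} u^{\deg f}$. Counting the $q^{n}$ monic polynomials of each degree gives $Z(u) = \sum_{n\ge 0} q^{n} u^{n} = (1-qu)^{-1}$ for $|u| < q^{-1}$, while the Euler product \eqref{eq:zetaA} rewrites as $Z(u) = \prod_{P\in\mb P}(1-u^{\deg P})^{-1}$. These two formulas for the same analytic object are the entire engine of the proof.

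Next I would take the logarithmic derivative, multiplied by $u$, of both expressions. From the closed form,
\begin{equation}
u\,\frac{Z'(u)}{Z(u)} = \frac{qu}{1-qu} = \sum_{n\ge 1} q^{n} u^{n}.
\end{equation}
From the Euler product, expanding each factor as a geometric series,
\begin{equation}
u\,\frac{Z'(u)}{Z(u)} = \sum_{P\in\mb P} \frac{(\deg P)\,u^{\deg P}}{1-u^{\deg P}} = \sum_{n\ge 1}\Bigl(\sum_{d\mid n} d\,\#\mb P_{d}\Bigr) u^{n},
\end{equation}
since a prime $P$ of degree $d$ contributes $d\,u^{dm}$ for each $m\ge 1$, and $dm=n$ forces $d\mid n$. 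Comparing the coefficients of $u^{n}$ yields the fundamental identity $\sum_{d\mid n} d\,\#\mb P_{d} = q^{n}$.

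M\"obius inversion then gives $n\,\#\mb P_{n} = \sum_{d\mid n} \mu(n/d)\,q^{d}$, and isolating the dominant divisor $d=n$ produces the main term $q^{n}/n$. It remains only to bound the error: the surviving terms have $d\mid n$ with $d\le n/2$, so
\begin{equation}
\Bigl|\sum_{\substack{d\mid n\\ d<n}} \mu(n/d)\,q^{d}\Bigr| \le \sum_{j=1}^{\lfloor n/2\rfloor} q^{j} \le \frac{q^{n/2+1}}{q-1} = O\bigl(q^{n/2}\bigr),
\end{equation}
whence $\#\mb P_{n} = q^{n}/n + O(q^{n/2}/n)$. None of the steps poses a genuine difficulty; the one point requiring care is this final estimate, where I must confirm that the largest proper-divisor contribution is exactly the $d=n/2$ term (present only for even $n$) and that the geometric sum absorbs all lower terms into the stated $O(q^{n/2})$ bound, so that the error is genuinely of the claimed order after dividing by $n$.
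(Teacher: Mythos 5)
Your proof is correct, and it is the standard argument: the paper states Theorem \ref{thm:pnt} without proof, deferring to the literature (it is the Prime Polynomial Theorem as in Rosen \cite{Ro02}), and that classical proof is exactly yours---equate the closed form $(1-qu)^{-1}$ of the zeta function with its Euler product, take logarithmic derivatives to obtain $\sum_{d\mid n} d\,\#\mb P_{d}=q^{n}$, and apply M\"obius inversion, bounding the proper-divisor terms by a geometric series ending at $q^{\lfloor n/2\rfloor}$. Your error estimate is sound: every proper divisor of $n$ is at most $n/2$, and $\sum_{j=1}^{\lfloor n/2\rfloor}q^{j}\le \tfrac{q}{q-1}\,q^{n/2}\le 2q^{n/2}$ since $q\ge 2$, which after dividing by $n$ gives the claimed $O\bigl(q^{n/2}/n\bigr)$ remainder.
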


\subsection{Quadratic function field in even characteristic}
Any separable quadratic extension $K$ of $k$ is of the form $K = K_{u} := k(x_{u})$,
where $x_{u}$ is a zero of $X^2+X+u=0$ for some $u\in k$.
We say that $u, v \in k$ are equivalent if $K_{u} = K_{v}$.
It is known that $u$ and $v$ are equivalent if and only if $u+v = \wp(w)$ for some $w\in k$ (see \cite{Ha34} or \cite{HL10}).
Fix an element $\xi \in \bF_{q}\setminus \wp(\bF_{q})$.
The following lemma is due to Y. Li.
In fact, Y. Li obtained the result which holds for any Artin-Schreier extensions of the rational function fields of any characteristic.

\begin{lem}\label{normalization}
Any separable quadratic extension $K$ of $k$ is of the form $K=K_{u}$,
where $u\in k$ can be uniquely normalized to satisfy the following conditions:
\begin{align}\label{normalization-2}
u = \sum_{i=1}^{m} \sum_{j=1}^{e_{i}} \frac{Q_{ij}}{P_{i}^{2j-1}} + \sum_{\ell=1}^{n} \alpha_{\ell} T^{2\ell-1} + \alpha,
\end{align}
where $P_{i}\in\mb P$ are distinct, $Q_{ij}\in\A$ with $\deg(Q_{ij}) < \deg(P_{i})$, $Q_{i e_{i}} \ne 0$, $\alpha \in \{0, \xi\}$, $\alpha_{\ell} \in\bF_{q}$
and $\alpha_{n} \ne 0$ for $n>0$.
\end{lem}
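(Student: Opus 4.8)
The plan is to read \eqref{normalization-2} as a distinguished set of representatives for the additive coset space $k/\wp(k)$ and to invoke the equivalence criterion recalled just above the lemma: $K_u=K_v$ exactly when $u+v\in\wp(k)$ (in characteristic $2$ we have $-1=1$, so ``difference'' and ``sum'' coincide). A preliminary observation that organizes everything is that $\wp$ is additive, $\wp(w_1+w_2)=\wp(w_1)+\wp(w_2)$, since the cross term in $(w_1+w_2)^2$ vanishes in characteristic $2$; this lets me reduce the partial-fraction components of an element one place at a time and independently.

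\emph{Existence.} Starting from the partial-fraction expansion $u=g+\sum_{P}\sum_{j\ge1}A_{P,j}/P^{j}$ with $g\in\A$ and $\deg A_{P,j}<\deg P$, I would normalize each principal part modulo $\wp$ so that only odd pole orders survive. The engine is the identity $\wp(C/P^{m})=C^{2}/P^{2m}+C/P^{m}$: because the residue field $\A/P\A$ is perfect of characteristic $2$, its Frobenius is bijective, so for any $A$ with $\deg A<\deg P$ there is a unique $C$ with $\deg C<\deg P$ and $C^{2}\equiv A\pmod P$; subtracting $\wp(C/P^{m})$ then cancels an even-order pole $P^{-2m}$ while introducing only terms of strictly smaller order. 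An induction on the maximal pole order removes every even-order term and leaves $\sum_{j}Q_{ij}/P_i^{2j-1}$ with $\deg Q_{ij}<\deg P_i$. Applying the same device to $g=\sum_m c_m T^{m}$ through $\wp(\sqrt{c}\,T^{m})=c\,T^{2m}+\sqrt{c}\,T^{m}$ (with $\sqrt{c}\in\bF_q$ the unique square root) clears all even positive degrees and yields $\sum_{\ell}\alpha_\ell T^{2\ell-1}$ plus a constant $c_0\in\bF_q$. Finally, since $\wp\colon\bF_q\to\bF_q$ has kernel $\bF_2$ and hence image of index $2$, I reduce $c_0$ modulo $\wp(\bF_q)$ into $\{0,\xi\}$. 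Dropping vanishing leading terms produces exactly the shape \eqref{normalization-2} with $Q_{ie_i}\ne0$ and $\alpha_n\ne0$.

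\emph{Uniqueness.} This is where the genuine content lies, and it rests on a valuation argument. It suffices to show that if an element $v$ of the form \eqref{normalization-2} --- the difference of two normalized representatives, so I allow leading coefficients to vanish and keep the constant in $\{0,\xi\}$ --- satisfies $v=\wp(w)$ for some $w\in k$, then $v=0$. Fix a finite prime $P$. If $w$ had a pole at $P$, say $v_P(w)=-d<0$, then $v_P(w^{2})=-2d<-d=v_P(w)$, so $v_P(\wp(w))=-2d$ is \emph{even}; but $v$ has only odd-order poles at each $P$, a contradiction. Hence $w$ has no finite poles, i.e. $w\in\A$, and then $v=\wp(w)$ has no finite poles either, forcing every $Q_{ij}=0$. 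Writing $w=\sum_{m=0}^{D}b_m T^{m}$ with $b_D\ne0$ and $D\ge1$ would give $\wp(w)=\sum_m b_m^{2}T^{2m}+\sum_m b_m T^{m}$ with leading term $b_D^{2}T^{2D}$ of even positive degree, again impossible for $v$; hence $D=0$ and $w\in\bF_q$. Thus $v=\wp(w)\in\wp(\bF_q)$, which kills all $\alpha_\ell$ and confines the constant to $\{0,\xi\}\cap\wp(\bF_q)=\{0\}$, so $v=0$.

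The step I expect to be the main obstacle is uniqueness, and the decisive idea is exactly the one isolated above: $\wp$ converts a genuine pole into one of even order and a positive-degree polynomial into one of even degree, so membership of a $\wp$-image in the ``odd-only'' family \eqref{normalization-2} collapses it to a constant lying in $\wp(\bF_q)$, after which the index-$2$ computation for $\bF_q/\wp(\bF_q)$ pins the residual ambiguity of the constant term down to the two choices $\{0,\xi\}$. Existence, by contrast, is the mechanical reduction driven by perfectness of the residue fields in characteristic $2$.
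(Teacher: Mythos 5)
Your proof is correct and follows essentially the same route as the paper: your existence argument is precisely the paper's partial-fraction reduction (killing even-order poles via square roots modulo $P$, killing even-degree polynomial terms via $\wp(\sqrt{c}\,T^{m})$, and reducing the constant modulo $\wp(\bF_{q})$ into $\{0,\xi\}$). The only difference is that the paper dismisses uniqueness as ``clear,'' whereas you supply the valuation/degree argument that justifies it --- $\wp$ produces only even-order poles and even positive degrees, so a $\wp$-image in the normalized shape must be a constant in $\wp(\bF_{q})$ --- and that argument is sound.
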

\begin{proof}
Since it is difficult to find the reference for it, we will give the proof due to Y. Li.
We know that every element $u\in k$ can be uniquely written as
$$
u(T) = \sum_{i=1}^{m} \sum_{e_{ij}=1}^{e_{i}} \frac{Q_{ij}}{P_{i}^{e_{ij}}} + \sum_{\ell=1}^{n} \alpha_{\ell} T^{\ell} + \alpha,
$$
where $P_{i} \in\mb P$ are distinct, $\deg(Q_{ij}) < \deg(P_{i})$, $Q_{i e_{i}} \ne 0$ and $\alpha_{\ell}, \alpha \in \bF_{q}$.
We can remove the term $\frac{Q_{ij}}{P_{i}^{e_{ij}}}$ with $2|e_{ij}$ as follows.
Let $e_{ij} = 2k_{ij}$ and let $M_{ij}\in\A$ with $\deg(M_{ij}) < \deg(P_{i})$ such that
$$
M_{ij}^2 \equiv Q_{ij} \bmod P_{i}.
$$
Then we have
$$
\frac{Q_{ij}}{P_{i}^{e_{ij}}} + \left(\frac{M_{ij}}{P_{i}^{k_{ij}}}\right)^2 + \frac{M_{ij}}{P_{i}^{k_{ij}}}
= \frac{Q_{ij} + M_{ij}^{2}}{P_{i}^{e_{ij}}} + \frac{M_{ij}}{P_{i}^{k_{ij}}}.
$$
Similarly, we can remove even degree term $\alpha_{2\ell} T^{2\ell}$ and we get the desired form.
Now it is clear that any two $u, v \in k$ which are normalized as in \eqref{normalization-2} are equivalent if and only if $u=v$.
\end{proof}

Let $u\in k$ be normalized one as in \eqref{normalization-2}.
The infinite prime $\infty = (1/T)$ splits, is inert or ramified in $K_{u}$ according as
$n=0$ and $\alpha=0$, $n=0$ and $\alpha = \xi$, or $n>0$.
Then the field $K_{u}$ is called \emph{real}, \emph{inert imaginary}, or \emph{ramified imaginary}, respectively.
The discriminant $D_{u}$ of $K_{u}$ is given by
$$
D_{u} = \prod_{i=1}^{m} P_{i}^{2e_i} \quad \text{ if }~~ n = 0
$$
and
$$
D_{u} = \prod_{i=1}^{m} P_{i}^{2e_i} \cdot (1/T)^{2n} \quad \text{ if ~~$n>0$,}
$$
and, by the Hurwitz genus formula (\cite[Theorem III.4.12]{St93}), the genus $g_{u}$ of $K_{u}$ is given by
\begin{align}\label{genus formula}
g_{u} = \frac{1}{2} \deg(D_{u}) - 1.
\end{align}

For $M\in\A^{+}$, let $r(M) := \prod_{P|M} P$ and $t(M) := M \cdot r(M)$.
For $P\in\PP$, let $\nu_{P}$ be the normalized valuation at $P$, that is, $\nu_{P}(M) = e$, where $P^{e}\| M$.
Let $\mc B$ be the set of monic polynomials $M$ such that $\nu_{P}(M) = 0$ or odd for any $P\in\mb P$,
that is, $t(M)$ is a square, and $\mc C$ be the set of rational functions $\frac{D}{M} \in k$ such that
$D\in\A, M \in\mc B$ and $\deg(D) < \deg(M)$.
For $M\in\mc B$, let $\ell_{P} := \frac{1}{2}(\nu_{P}(M)+1)$  for any $P|M$.
Also we let $\mc E$ be the set of rational functions $\frac{D}{M} \in \mc C$ of the form
$$
\frac{D}{M} = \sum_{P|M} \sum_{i=1}^{\ell_{P}} \frac{A_{P,i}}{P^{2i-1}},
$$
where $\deg(A_{P,i}) < \deg(P)$ for any $P|M$ and for all $1 \le i \le \ell_{P}$.
Note that for $\frac{D}{M}\in\mc E$, $\gcd(D,M) = 1$ if and only if $A_{P,\ell_{P}} \ne 0$ for all $P|M$.
Let $\mc F$ be the set of rational functions $\frac{D}{M} \in \mc E$ such that $A_{P,\ell_{P}} \ne 0$ for all $P|M$
and $\mc F' := \{u+\xi : u \in \mc F\}$.
By the normalization in \eqref{normalization-2}, we can see that $u\mapsto K_{u}$ defines a one-to-one correspondence
between $\mc F$ (resp. $\mc F'$) and the set of all real (resp. inert imaginary) separable quadratic extensions of $k$.
For any positive integer $s$, let $\mc G_{s}$ be the set of polynomials $F(T) \in \A$ of the form
$$
F(T) = \alpha + \sum_{i=1}^{s} \alpha_{i} T^{2i-1},
\quad\text{ where $\alpha \in \{0, \xi\}, \alpha_{i}\in\mb F_{q}$ and $\alpha_{s} \ne 0$.}
$$
Let $\mc G := \bigcup_{s\ge 1} \mc G_{s}$ and $\mc I := \{u+F : u \in \bar{\mc F} \text{ and } F \in \mc G\}$,
where $\bar{\mc F} = \mc F \cup \{0\}$.
By the normalization in \eqref{normalization-2}, we can see that $w\mapsto K_{w}$
defines a one-to-one correspondence between $\mc I$
and the set of all ramified imaginary separable quadratic extensions of $k$.

\subsection{Hasse symbol and $L$-functions}
Let $P\in\mb P$.
For any $u\in k$ whose denominator is not divisible by $P$, the Hasse symbol $[u, P)$ with values in $\bF_{2}$ is defined by
$$
[u, P) := \begin{cases}
0 & \text{ if $X^2+X\equiv u ~(\bmod P)$ is solvable in $\A$,} \\ 1 & \text{ otherwise.}
\end{cases}
$$
For $N\in\A$ prime to the denominator of $u$, write $N = sgn(N) \prod_{i=1}^{s} P_{i}^{e_i}$, where $P_{i} \in \mb P$ are distinct and $e_{i} \ge 1$,
and define $[u, N)$ to be $\sum_{i=1}^{s} e_{i} [u, P_{i})$.

For $u\in k$ and $0\ne N \in\A$, we also define the quadratic symbol:
$$
\left\{\frac{u}{N}\right\} := \begin{cases}
(-1)^{[u, N)} & \text{ if $N$ is prime to the denominator of $u$}, \\ 0 & \text{ otherwise.}
\end{cases}
$$
This symbol is clearly additive in its first variable, and multiplicative in the second variable.

For the field $K_{u}$, we associate a character $\chi_{u}$ on $\A^{+}$ which is defined by $\chi_{u}(f) = \{\frac{u}{f}\}$,
and let $L(s,\chi_{u})$ be the $L$-function  associated to the character $\chi_{u}$: for $s\in\mb C$ with ${\rm Re}(s)\ge 1$,
$$
L(s,\chi_{u}) := \sum_{f\in\A^{+}} \frac{\chi_{u}(f)}{|f|^{s}} = \prod_{P\in\mb P} \left(1-\frac{\chi_{u}(P)}{|P|^s}\right)^{-1}.
$$
It is well known that $L(s,\chi_{u})$ is a polynomial in $q^{-s}$.
Letting $z=q^{-s}$, write $\mc L(z,\chi_{u}) = L(s, \chi_{u})$.
Then, $\mc L(z,\chi_{u})$ is a polynomial in $z$ of degree $2 g_{u} + \frac{1}{2}(1+(-1)^{\varepsilon(u)})$,
where $\varepsilon(u) = 1$ if $K_{u}$ is ramified imaginary and $\varepsilon(u) = 0$ otherwise.
Also we have that $\mathcal{L}(z,\chi_{u})$ has a ``trivial" zero at $z=1$ (resp. $z=-1$)
if and only if $K_{u}$ is real (resp. inert imaginary), so we can define the ``completed" $L$--function as
\begin{equation}\label{completed-L}
\mc{L}^{*}(z,\chi_{u}) := \begin{cases}
\mc{L}(z,\chi_{u}) & \text{ if $K_{u}$ is ramified imaginary,} \\
(1-z)^{-1} \mc{L}(z,\chi_{u}) & \text{ if $K_{u}$ is real,} \\
(1+z)^{-1} \mc{L}(z,\chi_{u}) & \text{ if $K_{u}$ is inert imaginary,}
\end{cases}
\end{equation}
which is a polynomial of even degree $2 g_{u}$ satisfying the functional equation
\begin{equation}\label{functional equation}
\mathcal{L}^{*}(z,\chi_{u})=(q z^{2})^{g_{u}}\mathcal{L}^{*}\left(\frac{1}{qz},\chi_{u}\right).
\end{equation}

\subsection{Main results}
For $M\in\mc B$, let
$$
\tilde{M} := \prod_{P|M} P^{(\nu_{P}(M)+1)/2} = \sqrt{t(M)}.
$$
For positive integer $n$, let
\begin{align*}
&\mc B_{n} := \left\{M \in \mc B : \deg(t(M)) = 2n\right\},
\hspace{1em}\mc C_{n} := \left\{\tfrac{D}{M} \in \mc C : M\in\mc B_{n}\right\}, \\
&\mc E_{n} := \mc E \cap \mc C_{n}, \hspace{1.4em} \mc F_{n} := \mc F \cap \mc E_{n},
\hspace{1.4em} \mc F'_{n} := \left\{u+\xi : u \in \mc F_{n}\right\}.
\end{align*}
Under the above correspondence $u\mapsto K_{u}$, $\mc F_{n}$ (resp. $\mc F'_{n}$) corresponds to
the set of all real (resp. inert imaginary) separable quadratic extensions $K_{u}$ of $k$ with genus $n-1$.

Let $\mc F_{0} = \{0\}$.
For any integers $r\ge 0$ and $s \ge 1$, let $\mc I_{(r,s)} = \{u+F : u\in\mc F_{r}, F\in\mc G_{s}\}$.
For any integer $n \ge 1$, let $\mc I_{n}$ be the union of all $\mc I_{(r,s)}$,
where $(r,s)$ runs over all pairs of nonnegative integers such that $s>0$ and $r+s = n$.
Then, under the correspondence $u\mapsto K_{u}$, $\mc I_{n}$ corresponds to
the set of all ramified imaginary separable quadratic extensions $K_{u}$ of $k$ with genus $n-1$.

\begin{lem}
For a positive integer $n$, we have
$\#\mc B_{n} = q^{n}$, $\#\mc E_{n} = q^{2n}$, $\#\mc F_{n} = \zeta_{\A}(2)^{-1} q^{2n}$
and $\#\mc I_{n} = 2 \zeta_{\A}(2)^{-1} q^{2n-1}$.
\end{lem}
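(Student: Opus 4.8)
The plan is to compute each cardinality by organizing the count according to the denominator $M\in\mc B_n$, using the bijection $M\mapsto\tilde M$ between $\mc B$ and $\A^{+}$. I would first record that bijection: writing $M=\prod_{P|M}P^{\nu_P(M)}$ with each $\nu_P(M)$ odd, the map $M\mapsto\tilde M=\prod_{P|M}P^{(\nu_P(M)+1)/2}$ is a bijection onto $\A^{+}$, with inverse sending $N=\prod_P P^{a_P}$ to $\prod_P P^{2a_P-1}$. Since $\deg t(M)=\sum_{P|M}(\nu_P(M)+1)\deg P=2\deg\tilde M$, the condition $\deg t(M)=2n$ is exactly $\deg\tilde M=n$, so $\#\mc B_n=\#\A_n^{+}=q^{n}$.

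Next, for $\#\mc E_n$ I would specify an element of $\mc E_n$ by its denominator $M\in\mc B_n$ together with the coefficient datum $(A_{P,i})_{P|M,\,1\le i\le\ell_P}$ satisfying $\deg A_{P,i}<\deg P$. For a fixed $M$ there are $\prod_{P|M}|P|^{\ell_P}=q^{\sum_{P|M}\ell_P\deg P}=q^{\deg\tilde M}=q^{n}$ such data, and summing over the $q^{n}$ choices of $M$ gives $\#\mc E_n=q^{2n}$. The count of $\mc F_n$ is the same except that the leading coefficients $A_{P,\ell_P}$ must be nonzero, which replaces the local factor $|P|^{\ell_P}$ by $|P|^{\ell_P}(1-|P|^{-1})$; hence, since $M$ and $\tilde M$ have the same prime divisors,
\[
\#\mc F_n=q^{n}\sum_{M\in\mc B_n}\prod_{P|M}\bigl(1-|P|^{-1}\bigr)=q^{n}\sum_{N\in\A_n^{+}}\prod_{P|N}\bigl(1-|P|^{-1}\bigr).
\]
The heart of the argument is to evaluate $S_n:=\sum_{N\in\A_n^{+}}\prod_{P|N}(1-|P|^{-1})$, which I would do with an Euler product in a variable $u$ tracking $\deg N$:
\[
\sum_{n\ge0}S_nu^{n}=\prod_{P}\Bigl(1+(1-|P|^{-1})\tfrac{u^{\deg P}}{1-u^{\deg P}}\Bigr)=\prod_{P}\frac{1-|P|^{-1}u^{\deg P}}{1-u^{\deg P}}=\frac{1-u}{1-qu},
\]
where the final equality uses $\prod_P(1-u^{\deg P})^{-1}=\sum_{N\in\A^{+}}u^{\deg N}=(1-qu)^{-1}$ and $\prod_P(1-(u/q)^{\deg P})=(1-u)$. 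Extracting coefficients gives $S_n=q^{n}-q^{n-1}$ for $n\ge1$, whence $\#\mc F_n=q^{2n}(1-q^{-1})=\zeta_\A(2)^{-1}q^{2n}$, using $\zeta_\A(2)^{-1}=1-q^{-1}$.

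Finally, for $\#\mc I_n$ I would use the decomposition $\mc I_n=\bigcup_{r+s=n,\,s\ge1}\mc I_{(r,s)}$. Any $w=u+F$ with $u\in\bar{\mc F}$ and $F\in\mc G$ splits uniquely into its proper-fraction part $u$ and its polynomial part $F$, so $w$ determines $(u,F)$ and hence the pair $(r,s)$; this shows both that the union is disjoint and that $(u,F)\mapsto u+F$ is injective, giving $\#\mc I_{(r,s)}=\#\mc F_r\cdot\#\mc G_s$. A direct count yields $\#\mc G_s=2(q-1)q^{s-1}$, and with $\#\mc F_0=1$ and $\#\mc F_r=\zeta_\A(2)^{-1}q^{2r}$ for $r\ge1$ the convolution
\[
\#\mc I_n=\sum_{r=0}^{n-1}\#\mc F_r\,\#\mc G_{n-r}
\]
is a finite geometric sum that collapses to $2(q-1)q^{2n-2}=2\zeta_\A(2)^{-1}q^{2n-1}$.

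I expect the main obstacle to be the evaluation of $S_n$: putting the Euler factor $\tfrac{1-|P|^{-1}u^{\deg P}}{1-u^{\deg P}}$ into the closed form $\tfrac{1-u}{1-qu}$ and justifying the coefficient extraction. By contrast, the $\mc B_n$ and $\mc E_n$ counts are immediate from the parametrization, and the $\mc I_n$ count is routine bookkeeping once $\#\mc F_r$ and $\#\mc G_s$ are in hand.
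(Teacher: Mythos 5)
Your proof is correct and follows essentially the same route as the paper: the bijection $M\mapsto\tilde M$ between $\mc B_{n}$ and $\A_{n}^{+}$, a count of numerator data for each fixed denominator, and the convolution $\#\mc I_{n}=\sum_{r+s=n}\#\mc F_{r}\cdot\#\mc G_{s}$. The only difference is that where you evaluate $S_{n}=\sum_{N\in\A_{n}^{+}}\prod_{P|N}(1-|P|^{-1})$ via an Euler product, the paper recognizes your per-denominator count $q^{n}\prod_{P|\tilde M}(1-|P|^{-1})$ as $\Phi(\tilde M)$ and cites Rosen's formula $\sum_{f\in\A_{n}^{+}}\Phi(f)=\zeta_{\A}(2)^{-1}q^{2n}$, so your generating-function computation amounts to an inline proof of that cited fact.
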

\begin{proof}
The map $\mc B_{n}\to \A_{n}^{+}$ defined by $M\mapsto \tilde{M}$ and the map $\A_{n}^{+}\to \mc B_{n}$
defined by $N\mapsto N^{*} := {N^2}/{r(N)}$ are inverse to each other.
Thus we have $\#\mc B_{n} = \#\A_{n}^{+} = q^{n}$.
For each $M\in\mc B_{n}$, there are $q^{n}$ (resp. $\Phi(\tilde{M})$) $D$'s
such that $\frac{D}{M}\in\mc E_{n}$ (resp. $\frac{D}{M}\in\mc F_{n}$).
Thus $\#\mc E_{n} = q^{n} \cdot \#\mc B_{n} = q^{2n}$ and
$$
\#\mc F_{n} = \sum_{M\in\mc B_{n}} \Phi(\tilde{M})
= \sum_{\tilde{M}\in\A_{n}^{+}} \Phi(\tilde{M}) = \zeta_{\A}(2)^{-1} q^{2n}
$$
by \cite[Proposition 2.7]{Ro02}.
Since $\#\mc G_{s} = 2 \zeta_{\A}(2)^{-1} q^{s}$ for $s \ge 1$, we have
\begin{align*}
\#\mc I_{n} = \sum_{s=1}^{n} \#\mc I_{(n-s,s)}
= \sum_{s=1}^{n} \#\mc F_{n-s} \cdot \#\mc G_{s} = 2 \zeta_{\A}(2)^{-1} q^{2n-1}.
\end{align*}
\end{proof}

For any $s\in\mb C$ with ${\rm Re}(s)>0$, let
$$
{\rm P}(s) = \prod_{P}\left(1-\frac{1}{|P|^{s}(|P|+1)}\right).
$$
For an arbitrary small $\varepsilon>0$, let ${X}_{\varepsilon}$ be the set of complex numbers $s \in \mb C$
such that $\tfrac{1}2 \le {\rm Re}(s) < 1$ and $|s-\tfrac{1}2|>\varepsilon$, and
$\bar{X}_{\varepsilon} = \{\tfrac{1}2\} \cup {X}_{\varepsilon} \cup \{s\in \mb C : {\rm Re}(s) \ge 1\}$.
For the first moment of Dirichlet $L$--functions at $s\in\mb C$ with ${\rm Re}(s) \ge \frac{1}2$, we have the following theorem.

\begin{thm}\label{mean-1}
Let $s_{1} = \frac{1}2(1+\log_{q}2) \le 1$.
\begin{enumerate}
\item
For an arbitrary small $\varepsilon>0$ and $s\in \bar{X}_{\varepsilon}$, we have
\begin{align*}
\sum_{u\in\mc I_{g+1}} L(s,\chi_{u}) = 2 \alpha_{g}(s) q^{2g+1} + \begin{cases}
O(g 2^{\frac{g}2} q^{(2-s)g}) & \text{ if ${\rm Re}(s) < s_{1}$,} \\
O(g q^{\frac{3g}2}) & \text{ if ${\rm Re}(s) \ge s_{1}$,}
\end{cases}
\end{align*}
where $\alpha_{g}(\frac{1}2) = \frac{{\rm P}(1)}{\zeta_{\A}(2)}\big(g+1+\frac{2}{\log q} \frac{{\rm P}'}{\rm P}(1)\big)$,
$\alpha_{g}(s) = \frac{\zeta_{\A}(2s)}{\zeta_{\A}(2)} {\rm P}(2s)$ for ${\rm Re}(s) \ge 1$, and, for $s\in X_{\varepsilon}$,
\begin{align*}
\alpha_{g}(s) &= \frac{\zeta_{\A}(2s)}{\zeta_{\A}(2)} \left\{{\rm P}(2s) - q^{(1-2s)(g+1)} {\rm P}(2-2s) \right. \\
&\hspace{5em} \left. + {\rm P}(1)\left(q^{(1-2s)(g-[\frac{g-1}2])} - q^{(1-2s)([\frac{g}2]+1)}\right)\right\}.
\end{align*}

\item
For an arbitrary small $\varepsilon>0$ and $\delta>0$
and for $s \in \bar{X}_{\varepsilon}$ with $|s-1|>\delta$ or $s=1$, we have
\begin{align}
\sum_{u\in\mc F_{g+1}}L(s,\chi_{u})
&= \beta_{g}(s) q^{2g+2} + \begin{cases}
O(2^{\frac{g}2} q^{(2-s)g}) & \text{ if ${\rm Re}(s) < s_{1}$,} \\
O(g q^{\frac{3g}2}) & \text{ if ${\rm Re}(s) \ge s_{1}$,}
\end{cases}
\end{align}
where $\beta_{g}(\frac{1}2) = \frac{{\rm P}(1)}{\zeta_{\A}(2)}\big(g+1+\zeta_{\A}(\frac{1}2)+\frac{2}{\log q} \frac{{\rm P}'}{\rm P}(1)\big)$,
$\beta_{g}(s) = \frac{\zeta_{\A}(2s)}{\zeta_{\A}(2)} {\rm P}(2s)$ for ${\rm Re(s)} \ge 1$, and, for ${\rm Re}(s) < 1 \,(s\ne\frac{1}2)$,
\begin{align*}
\hspace{3em}\beta_{g}(s) &= \frac{\zeta_{\A}(2s)}{\zeta_{\A}(2)} \left\{{\rm P}(2s) - {\rm P}(1) q^{(1-2s)([\frac{g}2]+1)} \right. \\
&\hspace{6em} \left. + \tfrac{\zeta_{\A}(2-s)}{\zeta_{\A}(1+s)} q^{(1-2s)(g+1)} \left({\rm P}(1) q^{(2s-1)([\frac{g-1}2]+1)} - {\rm P}(2-2s)\right)\right\}  \\
&\hspace{5em} - {\rm P}(1) q^{-gs} \left(q^{[\frac{g}2]-s} + \tfrac{\zeta_{\A}(2-s)}{\zeta_{\A}(1+s)} q^{[\frac{g-1}2]}\right).
\end{align*}

\item
For an arbitrary small $\varepsilon>0$ and $s\in \bar{X}_{\varepsilon}$, we have
\begin{align*}
\sum_{u\in\mc F'_{g+1}}L(s,\chi_{u}) = \gamma_{g}(s) q^{2g+2} + \begin{cases}
O(2^{\frac{g}2} q^{(2-s)g}) & \text{ if ${\rm Re}(s) < s_{1}$,} \\
O(g q^{\frac{3g}2}) & \text{ if ${\rm Re}(s) \ge s_{1}$,}
\end{cases}
\end{align*}
where $\gamma_{g}(\frac{1}2) = \frac{{\rm P}(1)}{\zeta_{\A}(2)}\big(g+1+\frac{\zeta_{\A}(0)}{\zeta_{\A}(\frac{1}2)}+\frac{2}{\log q} \frac{{\rm P}'}{\rm P}(1)\big)$,
$\gamma_{g}(s) = \frac{\zeta_{\A}(2s)}{\zeta_{\A}(2)} {\rm P}(2s)$ for ${\rm Re(s)} \ge 1$, and, for $s\in X_{\varepsilon}$,
\begin{align*}
\hspace{3em}\gamma_{g}(s) &= \frac{\zeta_{\A}(2s)}{\zeta_{\A}(2)} \left\{{\rm P}(2s) - {\rm P}(1) q^{(1-2s)([\frac{g}2]+1)} \right. \\
&\hspace{6em} \left. + \left(\tfrac{1+q^{-s}}{1+q^{s-1}}\right) q^{(1-2s)(g+1)} \left({\rm P}(1) q^{(2s-1)([\frac{g-1}2]+1)} - {\rm P}(2-2s)\right)\right\}  \\
&\hspace{5em} + (-1)^{g} {\rm P}(1) q^{-gs} \left(q^{[\frac{g}2]-s} - \left(\tfrac{1+q^{-s}}{1+q^{s-1}}\right) q^{[\frac{g-1}2]}\right).
\end{align*}
\end{enumerate}
\end{thm}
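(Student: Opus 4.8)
The plan is to prove the three parts in parallel, since the families $\mc I_{g+1}$, $\mc F_{g+1}$, $\mc F'_{g+1}$ differ only in the behaviour of the infinite prime, which enters through the shape of the completed $L$--function \eqref{completed-L} and hence through the functional equation \eqref{functional equation}. First I would invoke the approximate functional equation of Lemma \ref{E-A-FE} to replace each $L(s,\chi_{u})$ — which, after completion, is a polynomial in $z=q^{-s}$ of degree $2g_{u}=2g$ — by a principal sum of length about $g$ together with a dual sum of comparable length weighted by the functional-equation factor. Writing $L(s,\chi_{u})=\sum_{n\ge 0}A_{u}(n)q^{-ns}$ with $A_{u}(n)=\sum_{f\in\A_{n}^{+}}\chi_{u}(f)$, the truncation uses that the coefficients vanish for $n>2g$ and that \eqref{functional equation} ties the top half of the $A_{u}(n)$ to the bottom half. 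I would then interchange the order of summation, so that the object to understand becomes, for each degree $n$, the character sum $\sum_{u}\chi_{u}(f)$ over the relevant family and over $f\in\A_{n}^{+}$.

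The heart of the argument is the evaluation of these character sums, for which I would use the results of \S\ref{S3-2}. Because $\chi_{u}(f)=\{\frac{u}{f}\}$ is additive in $u$ and multiplicative in $f$, and the families are built from the normal form \eqref{normalization-2}, the sum $\sum_{u}\chi_{u}(f)$ is essentially the full cardinality of the family when $f$ is a perfect square and exhibits cancellation otherwise. The diagonal $f=h^{2}$ therefore produces the main term: summing $|h|^{-2s}$ over monic $h$, after imposing the coprimality conditions that define $\mc F$, $\mc E$, $\mc I$, yields the Euler products $\frac{\zeta_{\A}(2s)}{\zeta_{\A}(2)}$ and $\mathrm P(2s)$ appearing in $\alpha_{g},\beta_{g},\gamma_{g}$, while the length $\approx g$ of the principal sum is what produces the linear-in-$g$ coefficient $g+1$ at the central point (at $s=\frac12$ each diagonal degree contributes a bounded amount, and there are $\approx g$ of them). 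The dual sum, processed identically, contributes the terms carrying $q^{(1-2s)(g+1)}\mathrm P(2-2s)$ together with the boundary factors $\frac{\zeta_{\A}(2-s)}{\zeta_{\A}(1+s)}$ in the real case and $\frac{1+q^{-s}}{1+q^{s-1}}$ in the inert case; these are exactly the ratios forced by the $(1\mp z)^{-1}$ completion and the prime at infinity, which is why the three formulas diverge only in these terms. The floor expressions $[\frac g2]$ and $[\frac{g-1}2]$ arise from splitting the principal and dual ranges at the self-dual degree.

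For the error terms I would bound the off-diagonal sums, where $f$ is not a square, by square-root cancellation, invoking the Prime Polynomial Theorem (Theorem \ref{thm:pnt}) to control the number of primes and hence the size of these sums; summing the length-$n$ cancellation over degrees $n\le g$ introduces the factor $2^{g/2}$ or the polynomial factor $g$, and the transition at $\mathrm{Re}(s)=s_{1}=\frac12(1+\log_{q}2)$ is precisely where the geometric weight $q^{-ns}$ overtakes the $2^{n/2}$ savings (one checks that at $s=s_{1}$ both claimed bounds collapse to $q^{3g/2}$). Finally, at $s=\frac12$ the principal and dual sums coincide under \eqref{functional equation}, so I would take the limit $s\to\frac12$ in the general-$s$ formula: the apparent pole of the $\zeta_{\A}(2s)$-type factor cancels against the vanishing of $q^{(1-2s)(\cdot)}-1$, and carrying the expansion to first order produces the logarithmic derivative $\frac{\mathrm P'}{\mathrm P}(1)$ and the extra constants $\zeta_{\A}(\frac12)$ and $\frac{\zeta_{\A}(0)}{\zeta_{\A}(\frac12)}$ distinguishing the three cases.

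I expect the main obstacle to be exactly this bookkeeping at and near $s=\frac12$: keeping the three completions straight while extracting the finite limit of several terms that individually blow up, and matching the floor-function boundary contributions so that the closed forms for $\alpha_{g},\beta_{g},\gamma_{g}$ come out precisely as stated. A secondary difficulty is the clean evaluation of the Artin--Schreier character sum $\sum_{u}\chi_{u}(f)$ in even characteristic, which replaces the Kummer-theoretic sums of the odd case and must be pushed through \S\ref{S3-2} with the coprimality constraints of $\mc F$, $\mc F'$, $\mc I$ respected.
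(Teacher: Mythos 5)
Your overall skeleton --- the approximate functional equation of Lemma \ref{E-A-FE}, interchanging the sums over $u$ and $f$, and splitting $f$ into squares (main term) and non-squares (error term) --- is exactly the paper's architecture, and your account of how the diagonal produces $\frac{\zeta_{\A}(2s)}{\zeta_{\A}(2)}{\rm P}(2s)$ and the floor-function boundary terms is essentially correct. But the mechanism you propose for the error term is a genuine gap. You want to bound the non-square contribution by ``square-root cancellation, invoking the Prime Polynomial Theorem''; the Prime Polynomial Theorem says nothing about these character sums, and the only square-root cancellation available pointwise in $u$ --- the Weil/RH bound, roughly $|A_{u}(n)|\le\binom{2g}{n}q^{n/2}$ --- is useless here: summed trivially over the family of size $\asymp q^{2g+1}$ it produces terms of size $q^{2g+1}\binom{2g}{n}q^{(\frac12-s)n}$, and $\binom{2g}{n}$ can be as large as roughly $4^{g}$, which swamps both the claimed error $O(g2^{\frac g2}q^{(2-s)g})$ and the main term itself. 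What is actually needed is cancellation in the $u$-aspect for each fixed non-square $f$, i.e.\ Propositions \ref{Non-square-Real} and \ref{nonsquare-ramified}: $\sum_{u\in\mc F_{n}}\{\frac uf\}\ll 2^{\frac d2}q^{n}$ and its ramified analogue. The paper proves these by a purely algebraic--combinatorial argument special to the Artin--Schreier setting: the families decompose into additive groups $\mc E_{M},\mc F_{M}$; when $\deg(f)\le\deg(t(M))$ and $\gcd(f,M)=1$ the set $\mc C_{t(M)}$ contains a complete residue system mod $f$, so the additive character sum $\Gamma_{f,M}$ vanishes for non-square $f$ (Lemma \ref{Two sums-1}); then M\"obius inversion $T_{f,M}=\sum_{N|\tilde M}\mu(N)\Gamma_{f,[M/N]}$, multiplicativity (Lemma \ref{lem-444}), and the count of at most $\binom{[d/2]}{m}$ surviving moduli yield the $2^{\frac d2}$ factor. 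You flag this evaluation as a ``secondary difficulty'', but it is the heart of the error analysis, and the tool you name in its place does not do the job.

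The second gap is at the central point. You propose to obtain $\alpha_{g}(\frac12),\beta_{g}(\frac12),\gamma_{g}(\frac12)$ by letting $s\to\frac12$ in the general-$s$ formula. The limits of the main terms do agree with the stated values (the pole of $\zeta_{\A}(2s)$ cancels against the vanishing of the bracket, as you say), but this does not prove the theorem at $s=\frac12$: the asymptotic formula on $X_{\varepsilon}$ carries implied constants depending on $\varepsilon$, precisely because $\zeta_{\A}(2s)$ is unbounded as $s\to\frac12$ (see the Remark following Theorem \ref{mean-1}), so the error term is not uniform in any neighbourhood of $\frac12$ and one cannot pass to the limit for fixed $g$. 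The paper instead evaluates the central point directly: in Lemmas \ref{auxiliary lemmas-4} and \ref{auxiliary lemmas-5} the sums $J_{h}(\frac12)$ and $\tilde J_{h}(\frac12)$ are computed without ever introducing $\zeta_{\A}(2s)$, by summing $\sum_{\deg(D)\le m\le[\frac h2]}1=[\frac h2]+1-\deg(D)$ termwise, which is exactly where the coefficients $g+1$ and $\frac{{\rm P}'}{{\rm P}}(1)$ come from. Your proof needs this separate computation (or a genuinely uniform error bound near $s=\frac12$) to be complete.
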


\begin{rem}
The restrictions of $|s-\frac{1}2|>\varepsilon$ in $X_{\varepsilon}$ and $|s-1|>\delta$ in Theorem \ref{mean-1}
are caused by the facts that $\zeta_{\A}(2s)$ and $\zeta_{\A}(2-s)$ are unbounded as $s\to \frac{1}2$ and $s\to 1$, respectively.
\end{rem}

Since $\#\mc I_{g+1} = 2 \zeta_{\A}(2)^{-1} q^{2g+1}$ and
$\#\mc F_{g+1} = \#\mc F'_{g+1} = \zeta_{\A}(2)^{-1} q^{2g+2}$,
we get from Theorem \ref{mean-1} the following asymptotic mean value formulas of $L$-functions at $s=\frac{1}2$ and $s=1$.
Compare with the results in \cite{AK12, Ju13, An12, Ju14}.

\begin{cor}\label{mean-1/2-corollary}
Assume that $q>2$ is fixed.
As $g\to \infty$, we have
\begin{enumerate}
\item
$\dis \frac{1}{\#\mc I_{g+1}}\sum_{u\in\mc I_{g+1}} L(\tfrac{1}2,\chi_{u}) \sim (g+1) {\rm P}(1)$,
\item
$\dis \frac{1}{\#\mc F_{g+1}} \sum_{u\in\mc F_{g+1}}L(\tfrac{1}2,\chi_{u}) \sim (g+1) {\rm P}(1)$,
\item
$\dis \frac{1}{\#\mc F'_{g+1}}\sum_{u\in\mc F'_{g+1}}L(\tfrac{1}2,\chi_{u}) \sim (g+1) {\rm P}(1)$.
\end{enumerate}
\end{cor}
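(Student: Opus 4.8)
The plan is to read off each of the three averages directly from Theorem \ref{mean-1} by specializing to the central point $s = \tfrac12$ and dividing by the cardinalities $\#\mc I_{g+1} = 2\zeta_{\A}(2)^{-1} q^{2g+1}$ and $\#\mc F_{g+1} = \#\mc F'_{g+1} = \zeta_{\A}(2)^{-1} q^{2g+2}$ recorded above. First I would pin down which branch of the error term is active at $s = \tfrac12$. Since $q > 2$ gives $0 < \log_q 2 < 1$, the threshold $s_{1} = \tfrac12(1 + \log_q 2)$ satisfies $\tfrac12 < s_{1} < 1$, so ${\rm Re}(\tfrac12) = \tfrac12 < s_{1}$ and the first branch applies: with $2 - \tfrac12 = \tfrac32$ the error is $O(g\,2^{g/2} q^{3g/2})$ for the sum over $\mc I_{g+1}$ and $O(2^{g/2} q^{3g/2})$ for the sums over $\mc F_{g+1}$ and $\mc F'_{g+1}$.

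Next I would compute the main term of each average. For the family $\mc I$, dividing the main term $2\alpha_{g}(\tfrac12) q^{2g+1}$ by $\#\mc I_{g+1}$ cancels the powers of $q$ together with the factor $\zeta_{\A}(2)^{-1}$, leaving
$$
\zeta_{\A}(2)\,\alpha_{g}(\tfrac12) = {\rm P}(1)\Bigl(g + 1 + \frac{2}{\log q}\frac{{\rm P}'}{\rm P}(1)\Bigr);
$$
the analogous computation gives ${\rm P}(1)\bigl(g + 1 + \zeta_{\A}(\tfrac12) + \tfrac{2}{\log q}\tfrac{{\rm P}'}{\rm P}(1)\bigr)$ for $\mc F$, and the same expression with $\zeta_{\A}(\tfrac12)$ replaced by $\tfrac{\zeta_{\A}(0)}{\zeta_{\A}(\frac12)}$ for $\mc F'$. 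Because $q$ is fixed, the quantities ${\rm P}(1)$, $\tfrac{{\rm P}'}{\rm P}(1)$, $\zeta_{\A}(\tfrac12) = (1 - q^{1/2})^{-1}$ and $\tfrac{\zeta_{\A}(0)}{\zeta_{\A}(\frac12)}$ are all $O(1)$, so in every case the main contribution to the average is $(g+1)\,{\rm P}(1) + O(1) \sim (g+1)\,{\rm P}(1)$.

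Finally I would verify that the error contributes only $o(g)$ to each average. Dividing the error by the size of the family gives, for $\mc I_{g+1}$,
$$
\frac{g\,2^{g/2} q^{3g/2}}{2\zeta_{\A}(2)^{-1} q^{2g+1}} = O\bigl(g\,(2/q)^{g/2}\bigr),
$$
and likewise $O\bigl((2/q)^{g/2}\bigr)$ for $\mc F_{g+1}$ and $\mc F'_{g+1}$. The hypothesis $q > 2$ forces $2/q < 1$, so these bounds decay exponentially and are in particular $o(g)$. Adding the main term to this negligible error yields the asserted asymptotics $\sim (g+1)\,{\rm P}(1)$ in all three cases. The only genuine subtlety lies in this last step: it is exactly the hypothesis $q > 2$ that makes $(2/q)^{g/2}$ decay, which is why the corollary excludes $q = 2$---for $q = 2$ the error and the main term are of comparable size and the stated asymptotic would break down.
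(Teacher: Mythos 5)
Your proposal is correct and follows exactly the paper's route: the paper deduces the corollary immediately from Theorem \ref{mean-1} at $s=\tfrac12$ by dividing by $\#\mc I_{g+1} = 2\zeta_{\A}(2)^{-1}q^{2g+1}$ and $\#\mc F_{g+1} = \#\mc F'_{g+1} = \zeta_{\A}(2)^{-1}q^{2g+2}$, which is precisely what you do. Your additional verifications---that $\tfrac12 < s_{1}$ when $q>2$ so the $O(g\,2^{g/2}q^{3g/2})$ branch applies, that the constants $\zeta_{\A}(\tfrac12)$, $\tfrac{\zeta_{\A}(0)}{\zeta_{\A}(1/2)}$ and $\tfrac{{\rm P}'}{\rm P}(1)$ are $O(1)$, and that the normalized error $O\bigl(g\,(2/q)^{g/2}\bigr)$ decays precisely because $q>2$---are exactly the details the paper leaves implicit.
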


\begin{cor}\label{mean-1-corollary}
Assume that $q>2$ is fixed.
As $g\to \infty$, we have
\begin{enumerate}
\item
$\dis\frac{1}{\#\mc I_{g+1}} \sum_{u\in\mc I_{g+1}} L(1, \chi_{u}) \sim \zeta_{\A}(2) {\rm P}(2)$,
\item
$\dis\frac{1}{\#\mc F_{g+1}}\sum_{u\in\mc F_{g+1}} L(1, \chi_{u}) \sim \zeta_{\A}(2) {\rm P}(2)$,
\item
$\dis\frac{1}{\#\mc F'_{g+1}}\sum_{u\in\mc F'_{g+1}} L(1, \chi_{u}) \sim \zeta_{\A}(2) {\rm P}(2)$.
\end{enumerate}
\end{cor}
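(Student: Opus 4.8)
The plan is to obtain all three statements as direct specializations of Theorem \ref{mean-1} at $s=1$, dividing through by the cardinalities supplied by the preceding lemma. Since $s_{1}=\frac{1}2(1+\log_{q}2)\le 1$, the point $s=1$ satisfies ${\rm Re}(s)=1\ge s_{1}$; it lies in $\bar{X}_{\varepsilon}$ and is explicitly admitted in part (2) of the theorem (which otherwise excludes a $\delta$-neighbourhood of $1$). Consequently, in each of the three sums the relevant error is the $O(g\,q^{3g/2})$ branch, and the leading coefficient is read off from the ``${\rm Re}(s)\ge 1$'' formula $\tfrac{\zeta_{\A}(2s)}{\zeta_{\A}(2)}{\rm P}(2s)$.

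First I would evaluate these coefficients at $s=1$. In all three cases one gets
\[
\alpha_{g}(1)=\beta_{g}(1)=\gamma_{g}(1)=\frac{\zeta_{\A}(2)}{\zeta_{\A}(2)}{\rm P}(2)={\rm P}(2),
\]
so Theorem \ref{mean-1} yields
\[
\sum_{u\in\mc I_{g+1}}L(1,\chi_{u})=2\,{\rm P}(2)\,q^{2g+1}+O(g\,q^{3g/2}),
\]
while both $\sum_{u\in\mc F_{g+1}}L(1,\chi_{u})$ and $\sum_{u\in\mc F'_{g+1}}L(1,\chi_{u})$ equal ${\rm P}(2)\,q^{2g+2}+O(g\,q^{3g/2})$. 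I would then divide by $\#\mc I_{g+1}=2\zeta_{\A}(2)^{-1}q^{2g+1}$ and by $\#\mc F_{g+1}=\#\mc F'_{g+1}=\zeta_{\A}(2)^{-1}q^{2g+2}$; in each case the factors of $2$ and of $q^{2g+1}$ (resp. $q^{2g+2}$) cancel, and the surviving $\zeta_{\A}(2)$ turns the leading coefficient ${\rm P}(2)$ into $\zeta_{\A}(2){\rm P}(2)$.

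The only thing to verify is that the normalized remainder is $o(1)$. Dividing $O(g\,q^{3g/2})$ by $q^{2g+1}$ (resp. $q^{2g+2}$) produces an error of size $g\,q^{-g/2}$ up to the positive constant $\zeta_{\A}(2)=(1-q^{-1})^{-1}$, which tends to $0$ as $g\to\infty$; since $\zeta_{\A}(2)$ and ${\rm P}(2)$ are positive constants, the main term is bounded away from $0$ and the asymptotic $\sim\zeta_{\A}(2){\rm P}(2)$ follows in each part. There is essentially no obstacle here: unlike the critical-point estimate of Corollary \ref{mean-1/2-corollary}, where the hypothesis $q>2$ is genuinely needed to prevent a $(2/q)^{g/2}$-type factor from overwhelming the $O(g)$ main term, at $s=1$ the remainder already decays for every $q\ge 2$. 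I would nonetheless retain the assumption $q>2$ for uniformity with the companion corollary, the deduction itself being routine once the coefficients and error branches of Theorem \ref{mean-1} are specialized.
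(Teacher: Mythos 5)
Your proposal is correct and coincides with the paper's own (implicit) proof: the paper deduces Corollary \ref{mean-1-corollary} exactly as you do, by specializing Theorem \ref{mean-1} at $s=1$ (where $\alpha_{g}(1)=\beta_{g}(1)=\gamma_{g}(1)={\rm P}(2)$ and the error branch is $O(gq^{3g/2})$) and dividing by the cardinalities $\#\mc I_{g+1}=2\zeta_{\A}(2)^{-1}q^{2g+1}$ and $\#\mc F_{g+1}=\#\mc F'_{g+1}=\zeta_{\A}(2)^{-1}q^{2g+2}$. Your side observation that the hypothesis $q>2$ is only genuinely needed at $s=\tfrac{1}{2}$, not at $s=1$, is also accurate.
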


Let $\OO_{u}$ be the integral closure of $\A$ in the quadratic function field $K_{u}$
and $h_{u}$ be the ideal class number of $\OO_{u}$.
If $K_{u}$ is real, $R_{u}$ denotes the regulator of $\OO_{u}$.
We have the following formula which connects $L(1, \chi_{u})$ with $h_{u}$ (\cite[Theorem 5.2]{CY08}):
\begin{align}\label{class-number-formula}
L(1, \chi_{u}) = \begin{cases}
q^{- g_{u}} h_{u}  & \text{ if $K_{u}$ is ramified imaginary,} \\
\zeta_{\A}(2)^{-1} q^{-g_{u}} h_{u} R_{u} & \text{ if $K_{u}$ is real,} \\
\frac{1}{2} \zeta_{\A}(2) \zeta_{\A}(3)^{-1} q^{-g_{u}} h_{u} & \text{ if $K_{u}$ is inert imaginary.} \\
\end{cases}
\end{align}

From Theorem \ref{mean-1} and \eqref{class-number-formula}, we obtain the following theorem.

\begin{thm}\label{mean-class-number}
As $q$ is fixed and $g\to \infty$, we have
\begin{enumerate}
\item
$\dis \sum_{u\in \mc I_{g+1}} h_{u} = 2 {\rm P}(2) q^{3g+1} + O\left(g q^{\frac{5}{2}g}\right)$,
\item
$\dis \sum_{u\in\mc F_{g+1}} h_{u} R_{u} = \zeta_{\A}(2) {\rm P}(2) q^{3g+2} + O\left(g q^{\frac{5}{2}g}\right)$,
\item
$\dis \sum_{u\in\mc F'_{g+1}} h_{u} = 2 \zeta_{\A}(2)^{-1} \zeta_{\A}(3) {\rm P}(2) q^{3g+2} + O\left(g q^{\frac{5}{2}g}\right)$.
\end{enumerate}
\end{thm}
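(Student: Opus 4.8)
The plan is to invert the Dirichlet class number formula \eqref{class-number-formula} so as to write each $h_u$ (or $h_u R_u$) as an explicit scalar multiple of $L(1,\chi_u)$, sum over the relevant family, and then substitute the $s=1$ specialization of Theorem \ref{mean-1}. The essential preliminary observation is that for $u$ ranging over $\mc I_{g+1}$, $\mc F_{g+1}$ or $\mc F'_{g+1}$ the genus $g_u$ is constant, equal to $g$, by the correspondences recorded just after the definitions of these families. Consequently the factor $q^{-g_u}$ appearing in \eqref{class-number-formula} is simply $q^{-g}$ and may be pulled outside the summation, which is what reduces everything to the first moment computed in Theorem \ref{mean-1}.

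For the ramified imaginary case, \eqref{class-number-formula} gives $h_u = q^{g} L(1,\chi_u)$, so $\sum_{u\in\mc I_{g+1}} h_u = q^{g}\sum_{u\in\mc I_{g+1}} L(1,\chi_u)$. Since ${\rm Re}(1)=1\ge s_1$, Theorem \ref{mean-1}(1) at $s=1$ applies with error term $O(g q^{3g/2})$, and because the point $s=1$ lies in the region ${\rm Re}(s)\ge 1$ the leading coefficient collapses to $\alpha_g(1)=\frac{\zeta_{\A}(2)}{\zeta_{\A}(2)}{\rm P}(2)={\rm P}(2)$. Multiplying the resulting $2{\rm P}(2)q^{2g+1}+O(gq^{3g/2})$ by $q^g$ yields part (1). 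The real case is identical after inverting to $h_u R_u = \zeta_{\A}(2)\, q^{g} L(1,\chi_u)$ and using $\beta_g(1)={\rm P}(2)$; here I would note that $s=1$ is explicitly permitted in the hypotheses of part (2). The inert imaginary case uses $h_u = 2\zeta_{\A}(2)^{-1}\zeta_{\A}(3)\, q^{g} L(1,\chi_u)$ together with $\gamma_g(1)={\rm P}(2)$, giving part (3).

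In each case the error term propagates transparently: the prefactor ($q^g$ times a fixed constant) is independent of the summation index, so multiplying $O(g q^{3g/2})$ by $q^g$ produces the stated $O(g q^{5g/2})$, with the bounded constants $\zeta_{\A}(2)$ and $\zeta_{\A}(3)$ absorbed into the implied constant. I do not expect any genuine analytic obstacle here, since the entire difficulty resides in Theorem \ref{mean-1}, which I would treat as a black box. The only points that demand care are the constant-genus identification $g_u=g$, the uniform evaluation of $\alpha_g,\beta_g,\gamma_g$ at $s=1$ (all three collapsing to ${\rm P}(2)$ precisely because $s=1$ sits on the boundary of the region ${\rm Re}(s)\ge 1$), and verifying that $s=1$ is a legitimate argument in the hypotheses of each part of Theorem \ref{mean-1}.
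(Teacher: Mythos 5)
Your proposal is correct and is exactly the paper's argument: the paper proves Theorem \ref{mean-class-number} precisely by combining the class number formula \eqref{class-number-formula} (with the observation that $g_u = g$ throughout each family $\mc I_{g+1}$, $\mc F_{g+1}$, $\mc F'_{g+1}$) with Theorem \ref{mean-1} evaluated at $s=1$, where $\alpha_g(1)=\beta_g(1)=\gamma_g(1)={\rm P}(2)$ and the error $O(gq^{3g/2})$ scales by the prefactor $q^g$ to give $O(gq^{5g/2})$. All the checkpoints you flag (constant genus, admissibility of $s=1$ in each part, collapse of the main-term coefficients) are handled exactly as you describe.
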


\begin{cor}\label{mean-class-number-coro}
As $q$ is fixed and $g\to \infty$, we have
\begin{enumerate}
\item
$\dis \frac{1}{\#\mc I_{g+1}} \sum_{u\in\mc I_{g+1}} h_{u} \sim \zeta_{\A}(2) {\rm P}(2) q^{g}$,
\item
$\dis \frac{1}{\#\mc F_{g+1}}\sum_{u\in\mc F_{g+1}} h_{u} R_{u} \sim \zeta_{\A}(2)^2 {\rm P}(2) q^{g}$,
\item
$\dis \frac{1}{\#\mc F'_{g+1}}\sum_{u\in\mc F'_{g+1}} h_{u} \sim 2 \zeta_{\A}(3) {\rm P}(2) q^{g}$.
\end{enumerate}
\end{cor}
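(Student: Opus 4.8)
The plan is to combine the class number formula \eqref{class-number-formula} with the $s=1$ specialization of Theorem \ref{mean-1}. The crucial structural observation is that within each of the three families $\mc I_{g+1}$, $\mc F_{g+1}$ and $\mc F'_{g+1}$ the genus $g_{u}$ is constant and equal to $g$; this is exactly the content of the correspondences recorded just before the cardinality lemma. Consequently the factor $q^{-g_{u}} = q^{-g}$ appearing in \eqref{class-number-formula} is independent of $u$ and may be pulled out of each sum, reducing the problem to evaluating the first moments $\sum L(1,\chi_{u})$ already furnished by Theorem \ref{mean-1}.

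Concretely, first I would treat the ramified imaginary case. For $u\in\mc I_{g+1}$ one has $K_{u}$ ramified imaginary with $g_{u} = g$, so \eqref{class-number-formula} gives $h_{u} = q^{g} L(1,\chi_{u})$; summing and applying Theorem \ref{mean-1}(1) at $s=1$ yields
$$
\sum_{u\in\mc I_{g+1}} h_{u} = q^{g}\sum_{u\in\mc I_{g+1}} L(1,\chi_{u}) = q^{g}\bigl(2\alpha_{g}(1)\,q^{2g+1} + O(g q^{\frac{3g}2})\bigr).
$$
Here I would use the clean branch $\alpha_{g}(s) = \frac{\zeta_{\A}(2s)}{\zeta_{\A}(2)}{\rm P}(2s)$ valid for ${\rm Re}(s)\ge 1$, which at $s=1$ simplifies to $\alpha_{g}(1) = {\rm P}(2)$ because $\zeta_{\A}(2)$ cancels. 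This produces the main term $2{\rm P}(2)q^{3g+1}$ and, multiplying the error through by $q^{g}$, the error term $O(g q^{\frac52 g})$, establishing part (1).

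The real and inert imaginary cases proceed identically. For $u\in\mc F_{g+1}$ (real, $g_{u}=g$) I would invert \eqref{class-number-formula} to get $h_{u} R_{u} = \zeta_{\A}(2)q^{g}L(1,\chi_{u})$ and insert Theorem \ref{mean-1}(2), using that $s=1$ is permitted in that part and that $\beta_{g}(1) = {\rm P}(2)$; for $u\in\mc F'_{g+1}$ (inert imaginary, $g_{u}=g$) I would use $h_{u} = 2\zeta_{\A}(2)^{-1}\zeta_{\A}(3)q^{g}L(1,\chi_{u})$ together with Theorem \ref{mean-1}(3) and $\gamma_{g}(1) = {\rm P}(2)$. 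In both cases the $q^{2g+2}$ main term scales to $q^{3g+2}$ after multiplication by $q^{g}$, and the prefactors $\zeta_{\A}(2)$ and $2\zeta_{\A}(2)^{-1}\zeta_{\A}(3)$ carry over verbatim, giving parts (2) and (3).

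There is no serious obstacle here: the argument is a direct substitution. The only points requiring care are (i) confirming that at $s=1$ the main-term coefficients $\alpha_{g},\beta_{g},\gamma_{g}$ all collapse to ${\rm P}(2)$ via the cancellation of $\zeta_{\A}(2)$, and (ii) checking that $s=1$ lies in the admissible region of each part of Theorem \ref{mean-1} --- this is immediate for parts (1) and (3), while for part (2) it is exactly the explicitly allowed endpoint $s=1$, so that one need not worry about the pole of $\zeta_{\A}(2-s)$ there. Finally, since $s_{1} = \frac{1}2(1+\log_{q}2)\le 1$, the error estimate at $s=1$ comes from the ${\rm Re}(s)\ge s_{1}$ branch $O(g q^{\frac{3g}2})$, which after the factor $q^{g}$ gives the claimed $O(g q^{\frac52 g})$.
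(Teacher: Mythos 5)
Your proposal is correct and follows exactly the paper's own route: invert the class number formula \eqref{class-number-formula} using that $g_{u}=g$ throughout each family, substitute the $s=1$ case of Theorem \ref{mean-1} where $\alpha_{g}(1)=\beta_{g}(1)=\gamma_{g}(1)={\rm P}(2)$, and carry the $O(gq^{\frac{3g}2})$ error through the factor $q^{g}$ --- this is precisely how the paper passes from Theorem \ref{mean-1} to Theorem \ref{mean-class-number}. The one step you leave implicit is the final normalization: what you derive are the unnormalized sums of Theorem \ref{mean-class-number}, and the corollary's stated mean values still require dividing by $\#\mc I_{g+1}=2\zeta_{\A}(2)^{-1}q^{2g+1}$ and $\#\mc F_{g+1}=\#\mc F'_{g+1}=\zeta_{\A}(2)^{-1}q^{2g+2}$ from the counting lemma, noting that the resulting relative error $O(gq^{\frac{g}2})$ is $o(q^{g})$.
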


\section{Preliminaries}

\subsection{``Approximate" functional equations of $L$--functions}\label{S3-1}
Let $u \in k$ be a normalized one as in \eqref{normalization-2} and write
\begin{align*}
L(s,\chi_{u}) = \sum_{n=0}^{d_{u}} A_{u}(n) q^{-sn} \quad\text{ with } A_{u}(n) = \sum_{f\in\A_{n}^{+}} \chi_{u}(f),
\end{align*}
where $d_{u} = 2 g_{u} + \frac{1}{2}(1+(-1)^{\varepsilon(u)})$.
In this subsection we prove the following lemma, which is a generalization of Lemmas 2.1 in \cite[]{An12, Ju13, Ju14}.
We remark that the proof of Lemma \ref{E-A-FE} also can be applied to obtain ``Approximate" functional equations
of $L$--functions $L(s,\chi_{N})$ in odd characteristic.

\begin{lem}\label{E-A-FE}
Let $s\in\mb C$ with ${\rm Re}(s)\ge\frac{1}2$.
\begin{enumerate}
\item
For $u\in\mc I$, we have
\begin{align}\label{Approximate-f-eq-ramified}
L(s,\chi_{u}) = \sum_{n=0}^{g_{u}} A_{u}(n) q^{-sn} + q^{(1-2s)g_{u}} \sum_{n=0}^{g_{u}-1} A_{u}(n) q^{(s-1)n}.
\end{align}
\item
For $u\in\mc F$, we have
\begin{align}\label{E-A-FE-Real}
L(s,\chi_{u}) = \sum_{n=0}^{g_{u}} A_{u}(n) q^{-sn} - q^{-(g_{u}+1)s} \sum_{n=0}^{g_{u}} A_{u}(n) + H_{u}(s),
\end{align}
where $H_{u}(1) := \zeta_{\A}(2)^{-1} q^{-g_{u}}\dis\sum_{n=0}^{g_{u}-1} \left(g_{u}-n\right) A_{u}(n)$ and, for $s\ne 1$,
$$
H_{u}(s) := q^{(1-2s)g_{u}} \eta(s) \sum_{n=0}^{g_{u}-1} q^{(s-1)n} A_{u}(n)
- q^{-s g_{u}} \eta(s) \sum_{n=0}^{g_{u}-1} A_{u}(n)
$$
with $\eta(s) = \frac{\zeta_{\A}(2-s)}{\zeta_{\A}(1+s)}$.
\item
For $u\in\mc F'$, we have
\begin{align}\label{A-FE-Inert}
\hspace{2.5em}L(s,\chi_{u}) &= \sum_{n=0}^{g_{u}} A_{u}(n) q^{-sn} + q^{-(g_{u}+1)s} \sum_{n=0}^{g_{u}}(-1)^{n+g_{u}} A_{u}(n) \nonumber \\
&\quad + \nu(s) q^{(1-2s)g_{u}} \sum_{n=0}^{g_{u}-1} A_{u}(n) q^{(s-1)n}
+ \nu(s) q^{-sg_{u}}\sum_{n=0}^{g_{u}-1} (-1)^{n+g_{u}+1} A_{u}(n)
\end{align}
with $\nu(s) = \frac{1+q^{-s}}{1+q^{s-1}}$.
\end{enumerate}
\end{lem}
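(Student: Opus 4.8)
The plan is to deduce all three approximate functional equations from the single functional equation \eqref{functional equation} for the completed $L$-function. Write $\mc{L}^{*}(z,\chi_{u})=\sum_{n=0}^{2g_{u}}c_{u}(n)z^{n}$; this is a polynomial of degree $2g_{u}$, and comparing the coefficient of $z^{n}$ on the two sides of $\mc{L}^{*}(z,\chi_{u})=(qz^{2})^{g_{u}}\mc{L}^{*}\!\big(\tfrac{1}{qz},\chi_{u}\big)$ gives the coefficient symmetry $c_{u}(n)=q^{\,n-g_{u}}c_{u}(2g_{u}-n)$ for $0\le n\le 2g_{u}$. Setting $z=q^{-s}$, splitting $\sum_{n=0}^{2g_{u}}$ at $n=g_{u}$, and using this symmetry to fold the range $\{g_{u}+1,\dots,2g_{u}\}$ back onto $\{0,\dots,g_{u}-1\}$ yields the folded identity
\[
\mc{L}^{*}(q^{-s},\chi_{u})=\sum_{n=0}^{g_{u}}c_{u}(n)q^{-sn}+q^{(1-2s)g_{u}}\sum_{n=0}^{g_{u}-1}c_{u}(n)q^{(s-1)n}.
\]
For $u\in\mc I$ we have $\varepsilon(u)=1$, so $d_{u}=2g_{u}$ and $\mc{L}^{*}=\mc{L}$, i.e. $c_{u}(n)=A_{u}(n)$; the displayed identity is then exactly \eqref{Approximate-f-eq-ramified}, proving part (1).

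For the remaining cases $d_{u}=2g_{u}+1$ and, by \eqref{completed-L}, $\mc{L}(z,\chi_{u})=(1-z)\mc{L}^{*}(z,\chi_{u})$ for $u\in\mc F$ and $\mc{L}(z,\chi_{u})=(1+z)\mc{L}^{*}(z,\chi_{u})$ for $u\in\mc F'$. Comparing coefficients and inverting, $c_{u}(n)=\sum_{m=0}^{n}A_{u}(m)$ in the real case and $c_{u}(n)=\sum_{m=0}^{n}(-1)^{n-m}A_{u}(m)$ in the inert case. Since $L(s,\chi_{u})=(1\mp q^{-s})\mc{L}^{*}(q^{-s},\chi_{u})$ (upper signs for $\mc F$, lower for $\mc F'$), I would substitute the folded identity together with these partial-sum expressions for $c_{u}(n)$. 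The first (non-folded) sum telescopes: $(1\mp q^{-s})\sum_{n=0}^{g_{u}}c_{u}(n)q^{-sn}$ collapses to $\sum_{n=0}^{g_{u}}A_{u}(n)q^{-sn}$ together with a single boundary term $\mp q^{-(g_{u}+1)s}c_{u}(g_{u})$, which---after writing $c_{u}(g_{u})$ back in terms of the $A_{u}(m)$ and using $(-1)^{g_{u}-m}=(-1)^{g_{u}+m}$---gives precisely the second term of \eqref{E-A-FE-Real} (resp.\ of \eqref{A-FE-Inert}).

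The heart of the computation is the folded sum $q^{(1-2s)g_{u}}\sum_{n=0}^{g_{u}-1}c_{u}(n)q^{(s-1)n}$. Here I would interchange the order of summation, so that the inner sum becomes the finite geometric series $\sum_{n=m}^{g_{u}-1}q^{(s-1)n}$ in the real case and the alternating series $\sum_{n=m}^{g_{u}-1}(-1)^{n-m}q^{(s-1)n}$ in the inert case. Summing these introduces the denominators $1-q^{s-1}$ and $1+q^{s-1}$; combined with the prefactor $1\mp q^{-s}$ they assemble exactly into $\eta(s)=\frac{1-q^{-s}}{1-q^{s-1}}=\frac{\zeta_{\A}(2-s)}{\zeta_{\A}(1+s)}$ and $\nu(s)=\frac{1+q^{-s}}{1+q^{s-1}}$, while the two endpoints of each geometric series produce the two remaining terms of $H_{u}(s)$ (resp.\ the two $\nu(s)$-terms). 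This step is the main obstacle: it is elementary but delicate bookkeeping, and one must track the signs $(-1)^{n+g_{u}}$ and the exponents $q^{(1-2s)g_{u}}$ and $q^{-sg_{u}}$ carefully to land on the stated shape.

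Finally, in the real case $\eta(s)$ has a simple pole at $s=1$ coming from $\zeta_{\A}(2-s)$, so $H_{u}(s)$ is an indeterminate $0\cdot\infty$ there; as $s\to 1$ the geometric ratio $q^{s-1}\to 1$ and the inner sum $\sum_{n=m}^{g_{u}-1}q^{(s-1)n}$ tends to its number of terms $g_{u}-m$. Passing to this limit and using $\zeta_{\A}(2)^{-1}=1-q^{-1}$ recovers $H_{u}(1)=\zeta_{\A}(2)^{-1}q^{-g_{u}}\sum_{n=0}^{g_{u}-1}(g_{u}-n)A_{u}(n)$, the pole of $\eta$ being cancelled since $L(1,\chi_{u})$ is finite. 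The inert case has no analogous difficulty because $\nu(s)$ is regular at $s=1$, where $1+q^{s-1}=2$. As $L(s,\chi_{u})$ is a polynomial in $q^{-s}$ and each right-hand side is a finite combination of such polynomials with the rational functions $\eta$ and $\nu$, the identities, once verified off the exceptional points, extend to them by continuity, completing the proof.
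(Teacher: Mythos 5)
Your proposal is correct and follows essentially the same route as the paper: derive the coefficient symmetry from the functional equation \eqref{functional equation}, fold the sum for $\mc L^{*}$, express $A_u^{*}(n)$ as (signed) partial sums of the $A_u(n)$ via \eqref{completed-L}, and then multiply by $1\mp q^{-s}$ and evaluate the resulting geometric series, with the $s=1$ case of the real family handled as a limit. The only difference is cosmetic bookkeeping (you telescope the unfolded sum after multiplying by $1\mp z$, while the paper first rewrites it as $\sum_n A_u(n)(z^n-z^{g_u+1})/(1-z)$), which is the same computation in a different order.
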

\begin{proof}
Write
\begin{equation*}
\mc L(z,\chi_{u}) = \sum_{n=0}^{d_{u}} A_{u}(n) z^{n} \quad\text{ and }\quad \mc L^{*}(z,\chi_{u}) = \sum_{n=0}^{2g_{u}} A_{u}^{*}(n) z^{n}.
\end{equation*}
By definition \eqref{completed-L}, we have
\begin{align}\label{C-O}
A_{u}^{*}(n) = \begin{cases}
A_{u}(n) & \text{ if $u\in\mc I$,} \\
\sum_{i=0}^{n} A_{u}(i) & \text{ if $u\in\mc F$,} \\
\sum_{i=0}^{n} (-1)^{n-i} A_{u}(i) & \text{ if $u\in\mc F'$.}
\end{cases}
\end{align}
From the functional equation \eqref{functional equation}, we have
\begin{align*}
\sum_{n=0}^{2g_{u}} A_{u}^{*}(n) z^{n} = \sum_{n=0}^{2g_{u}} A_{u}^{*}(n) q^{g_{u}-n} z^{2g_{u}-n}
= \sum_{n=0}^{2g_{u}} A_{u}^{*}(2g_{u}-n) q^{n-g_{u}} z^{n},
\end{align*}
and equating coefficients, we have
\begin{align*}
A_{u}^{*}(n) = A_{u}^{*}(2g_{u}-n) q^{n-g_{u}} \quad\text{ or }\quad A_{u}^{*}(2g_{u}-n) = A_{u}^{*}(n) q^{g_{u}-n}.
\end{align*}
Hence, we can write $\mc L^{*}(z,\chi_{u})$ as
\begin{align}\label{Approximate-FE}
\mc L^{*}(z,\chi_{u}) = \sum_{n=0}^{g_{u}} A_{u}^{*}(n) z^{n}
+ q^{g_{u}} z^{2g_{u}} \sum_{n=0}^{g_{u}-1} A_{u}^{*}(n) q^{-n} z^{-n}.
\end{align}
If $u\in\mc I$, since $\mc L(u,\chi_{u}) = \mc L^{*}(u,\chi_{u})$,
\eqref{Approximate-f-eq-ramified} follows from \eqref{Approximate-FE} immediately by letting $z=q^{-s}$.
Suppose that $u\in\mc F$.
From \eqref{C-O} and \eqref{Approximate-FE}, we have
\begin{align}\label{A-FE-001}
\mc L^{*}(z,\chi_{u}) &= \sum_{n=0}^{g_{u}} \left(\sum_{i=0}^{n} A_{u}(i)\right) z^{n}
+ q^{g_{u}} z^{2 g_{u}} \sum_{n=0}^{g_{u}-1} \left(\sum_{j=0}^{n} A_{u}(j)\right) q^{-n} z^{-n} \nonumber \\
&= \sum_{n=0}^{g_{u}} \left(\frac{z^{n}-z^{g_{u}+1}}{1-z}\right) A_{u}(n) + H^*(z),
\end{align}
where $H^{*}(q^{-1}) := q^{-g_{u}}\dis\sum_{n=0}^{g_{u}-1} \left(g_{u}-n\right) A_{u}(n)$ and, for $z\ne q^{-1}$,
$$
H^{*}(z) := \dfrac{q^{g_{u}} z^{2g_{u}}}{1-q^{-1} z^{-1}} \dis\sum_{n=0}^{g_{u}-1} q^{-n} z^{-n} A_{u}(n)
- \dfrac{z^{g_{u}}}{1-q^{-1} z^{-1}} \dis\sum_{n=0}^{g_{u}-1} A_{u}(n).
$$
Then, by multiplying $(1-z)$ on \eqref{A-FE-001} and putting $z=q^{-s}$, we get \eqref{E-A-FE-Real}.
Finally, consider the case that $u\in\mc F'$.
By \eqref{C-O} and \eqref{Approximate-FE}, we have
\begin{align}\label{A-FE-002}
\mc L^{*}(z,\chi_{u}) &= \sum_{n=0}^{g_{u}} \left(\sum_{i=0}^{n} (-1)^{n-i} A_{u}(i)\right) z^{n}
+ q^{g_{u}} z^{2g_{u}} \sum_{n=0}^{g_{u}-1} \left(\sum_{j=0}^{n} (-1)^{n-j} A_{u}(j)\right) q^{-n} z^{-n} \nonumber \\
&\hspace{-1.5em}= \frac{1}{1+z} \sum_{n=0}^{g_{u}} A_{u}(n) z^{n} + \frac{z^{g_{u}+1}}{1+z} \sum_{n=0}^{g_{u}}(-1)^{n+g_{u}} A_{u}(n) \nonumber\\
&\hspace{-1em}+ \frac{q^{g_{u}} z^{2g_{u}}}{1+q^{-1} z^{-1}} \sum_{n=0}^{g_{u}-1} A_{u}(n) q^{-n} z^{-n}
+ \frac{z^{g_{u}}}{1+q^{-1} z^{-1}} \sum_{n=0}^{g_{u}-1} (-1)^{n+g_{u}+1} A_{u}(n).
\end{align}
By multiplying $(1+z)$ on \eqref{A-FE-002} and putting $z=q^{-s}$, we get \eqref{A-FE-Inert}.
\end{proof}

\subsection{Some auxiliary lemmas}\label{S3-2}
All results in this subsection hold in arbitrary characteristic.
Thus we assume that $q$ is a power of any prime number.

The following lemma is a minor modification of Theorem 17.1 in \cite{Ro02}.

\begin{lem}\label{auxiliary lemmas-1}
Let $\rho : \A^{+} \to \mb C$ and let $\zeta_{\rho}(s)$ be the corresponding Dirichlet series.
Suppose this series converges absolutely in the region $\mathrm{Re}(s)>1$
and is holomorphic in the region $\{s \in B : \mathrm{Re}(s) = 1\}$
except for a simple pole of at $s=1$ with residue $\alpha$, where
$$
B = \left\{s \in \mb C : - \frac{\pi i}{\log q} \le \mathrm{Im}(s) < \frac{\pi i}{\log q} \right\}.
$$
Then, there is a positive real number $\delta < 1$ such that
$$
\sum_{f\in\A_{n}^{+}} \rho(f) = \alpha (\log q) q^{n} + O\left(q^{\delta n}\right).
$$
If $\zeta_{\rho}(s) - \frac{\alpha}{s-1}$ is holomorphic in $\mathrm{Re}(s) \ge \delta'$,
then the error term can be replaced by with $O(q^{\delta' n})$.
\end{lem}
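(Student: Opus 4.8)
The plan is to convert the Dirichlet series into an ordinary power series in the single variable $u = q^{-s}$ and to read off the asymptotics of its coefficients from the location and nature of its singularities. Set $c_{n} = \sum_{f\in\A_{n}^{+}}\rho(f)$, so that grouping the terms of $\zeta_{\rho}(s) = \sum_{f\in\A^{+}}\rho(f)|f|^{-s}$ by degree gives
$$
\zeta_{\rho}(s) = \sum_{n=0}^{\infty} c_{n}\, q^{-sn} = \sum_{n=0}^{\infty} c_{n}\, u^{n} =: F(u),\qquad u = q^{-s}.
$$
Absolute convergence for $\mathrm{Re}(s)>1$ says precisely that $F(u)$ converges for $|u|<q^{-1}$. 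Since $u=q^{-s}$ is periodic in $s$ with period $\frac{2\pi i}{\log q}$, the strip $B$ is a fundamental domain for this map and the arc $\{\mathrm{Re}(s)=1\}\cap B$ is carried bijectively onto the circle $\{|u|=q^{-1}\}$. Thus the hypothesis translates into the statement that $F$ extends holomorphically to an open neighborhood of the closed disk $\{|u|\le q^{-1}\}$, save for a single simple pole at $u=q^{-1}$.

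Next I would compute the principal part of $F$ at $u=q^{-1}$. From $\frac{du}{ds}=-u\log q$ one obtains $s-1 = -\frac{q}{\log q}(u-q^{-1}) + O\bigl((u-q^{-1})^{2}\bigr)$ near $s=1$, so the simple pole $\frac{\alpha}{s-1}$ of $\zeta_{\rho}$ becomes a simple pole of $F$ at $u=q^{-1}$ with residue $R = -\frac{\alpha\log q}{q}$. Expanding this principal part as a geometric series for $|u|<q^{-1}$,
$$
\frac{R}{u-q^{-1}} = -Rq\sum_{n=0}^{\infty} q^{n}u^{n} = \alpha(\log q)\sum_{n=0}^{\infty} q^{n}u^{n},
$$
so its $n$-th Taylor coefficient is exactly $\alpha(\log q)q^{n}$, the claimed main term.

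Finally, set $G(u) = F(u) - \frac{R}{u-q^{-1}}$, whose $n$-th coefficient is $c_{n}-\alpha(\log q)q^{n}$. By construction $G$ has no pole at $u=q^{-1}$, and no other singularity on the circle $\{|u|=q^{-1}\}$, so by compactness it is holomorphic on $\{|u|<q^{-\delta}\}$ for some $\delta<1$; Cauchy's coefficient estimate over a circle of radius just below $q^{-\delta}$ then gives $c_{n}-\alpha(\log q)q^{n}=O(q^{\delta n})$, which is the first assertion. For the refinement, suppose $\zeta_{\rho}(s)-\frac{\alpha}{s-1}$ is holomorphic for $\mathrm{Re}(s)\ge\delta'$. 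Inside the strip $B$ the only pole of $\frac{\alpha}{s-1}$ is at $s=1$, where its principal part matches that of $\frac{R}{u-q^{-1}}$, so $\frac{\alpha}{s-1}-\frac{R}{u-q^{-1}}$ is holomorphic on $\{\mathrm{Re}(s)\ge\delta'\}\cap B$; adding this to $\zeta_{\rho}(s)-\frac{\alpha}{s-1}$ shows that $G$ is holomorphic on the closed disk $\{|u|\le q^{-\delta'}\}$, and a Cauchy estimate over $|u|=q^{-\delta'}$ yields the sharper bound $O(q^{\delta' n})$.

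The step I expect to demand the most care is the dictionary between the two variables. One must track the Jacobian factor $\frac{du}{ds}=-u\log q$ in passing from the residue $\alpha$ at $s=1$ to the residue $R$ at $u=q^{-1}$, and, for the refinement, verify that subtracting the non-periodic function $\frac{\alpha}{s-1}$ is genuinely compatible with subtracting the honest principal part $\frac{R}{u-q^{-1}}$ of the power series; this compatibility, valid because the two principal parts agree at the unique pole $s=1$ in the fundamental strip, is exactly what upgrades the qualitative $O(q^{\delta n})$ to the quantitative $O(q^{\delta' n})$.
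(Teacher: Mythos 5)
Your proof is correct, and it is essentially the same argument as the one behind the result the paper invokes: the paper gives no proof of this lemma, citing it as a minor modification of Theorem 17.1 of Rosen's \emph{Number Theory in Function Fields}, whose standard proof is exactly your substitution $u=q^{-s}$, the translation of the residue ($R=-\alpha(\log q)/q$), subtraction of the principal part $R/(u-q^{-1})$, and Cauchy coefficient estimates. Your handling of the $\delta'$-refinement --- matching principal parts only inside the fundamental strip $B$, which is forced because $\alpha/(s-1)$ is not periodic while $\zeta_{\rho}(s)=F(q^{-s})$ is --- is precisely the ``minor modification'' the paper leaves implicit, so nothing is missing.
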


\begin{lem}\label{auxiliary lemmas-2}
Let $L\in\A^{+}$.
Given any $\epsilon>0$, we have
$$
\sum_{\substack{f\in\A_{n}^{+} \\ (f,L)=1}} \Phi(f)
= \frac{1}{\zeta_{\A}(2)} q^{2n} \prod_{P|L} (1+|P|^{-1})^{-1} + O\left(q^{(1+\epsilon) n}\right).
$$
\end{lem}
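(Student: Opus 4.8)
The plan is to strip off the linear growth of $\Phi$ and then quote Lemma \ref{auxiliary lemmas-1}. Since every $f\in\A_{n}^{+}$ has $|f|=q^{n}$, I would factor $\Phi(f)=q^{n}\rho(f)$, where $\rho(f):=\Phi(f)/|f|=\prod_{P\mid f}(1-|P|^{-1})$, so that
$$
\sum_{\substack{f\in\A_{n}^{+}\\ (f,L)=1}}\Phi(f)=q^{n}\sum_{\substack{f\in\A_{n}^{+}\\ (f,L)=1}}\rho(f).
$$
The function $\rho$ is multiplicative and bounded ($0<\rho(f)\le 1$), so its coprime Dirichlet series converges absolutely for $\mathrm{Re}(s)>1$ and we are exactly in the situation of Lemma \ref{auxiliary lemmas-1}. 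The whole problem thus becomes the evaluation of $\sum_{f\in\A_{n}^{+},\,(f,L)=1}\rho(f)$.

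Next I would compute the associated Dirichlet series $\zeta_{\rho}(s)=\sum_{(f,L)=1}\rho(f)|f|^{-s}$ by an Euler product. Using $\rho(P^{e})=1-|P|^{-1}$ for $e\ge 1$, the local factor at $P\nmid L$ is $\tfrac{1-|P|^{-1-s}}{1-|P|^{-s}}$, and collecting factors gives
$$
\zeta_{\rho}(s)=\frac{\zeta_{\A}(s)}{\zeta_{\A}(s+1)}\prod_{P\mid L}\frac{1-|P|^{-s}}{1-|P|^{-1-s}}.
$$
Writing $z=q^{-s}$ and using $\zeta_{\A}(s)=(1-qz)^{-1}$, $\zeta_{\A}(s+1)=(1-z)^{-1}$, this is the rational function $\tfrac{1-z}{1-qz}\prod_{P\mid L}\tfrac{1-z^{\deg P}}{1-|P|^{-1}z^{\deg P}}$, whose only singularity in the disc $|z|<1$ (equivalently $\mathrm{Re}(s)>0$) is the simple pole of $\zeta_{\A}(s)$ at $s=1$; the factors indexed by $P\mid L$ contribute zeros but no poles, since $1-|P|^{-1}z^{\deg P}$ vanishes only on $|z|=q$. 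A residue computation using $\operatorname{Res}_{s=1}\zeta_{\A}(s)=1/\log q$ and $\zeta_{\A}(s+1)|_{s=1}=\zeta_{\A}(2)$ then yields
$$
\alpha:=\operatorname{Res}_{s=1}\zeta_{\rho}(s)=\frac{1}{(\log q)\,\zeta_{\A}(2)}\prod_{P\mid L}\frac{1-|P|^{-1}}{1-|P|^{-2}}=\frac{1}{(\log q)\,\zeta_{\A}(2)}\prod_{P\mid L}(1+|P|^{-1})^{-1}.
$$

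Finally, since the rational function above is holomorphic in $|z|<1$ apart from the simple pole at $z=1/q$, the difference $\zeta_{\rho}(s)-\alpha/(s-1)$ is holomorphic in $\mathrm{Re}(s)\ge\delta'$ for every $\delta'>0$. I would therefore invoke the refined form of Lemma \ref{auxiliary lemmas-1} with $\delta'$ chosen below any prescribed $\epsilon$, obtaining
$$
\sum_{\substack{f\in\A_{n}^{+}\\ (f,L)=1}}\rho(f)=\alpha(\log q)q^{n}+O(q^{\epsilon n})=\frac{1}{\zeta_{\A}(2)}\prod_{P\mid L}(1+|P|^{-1})^{-1}\,q^{n}+O(q^{\epsilon n}),
$$
and multiplying by $q^{n}$ gives the stated asymptotic with error $O(q^{(1+\epsilon)n})$. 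The one genuinely delicate point is the error term: the qualitative first part of Lemma \ref{auxiliary lemmas-1} would only produce some unspecified $\delta<1$, hence $O(q^{(1+\delta)n})$, which is too weak. To reach $O(q^{(1+\epsilon)n})$ for arbitrary $\epsilon$ one must use the holomorphy statement, and the crux is precisely the verification that the finite product over $P\mid L$ introduces no additional poles in $\mathrm{Re}(s)>0$, so that $\delta'$ can be pushed all the way down to $0$.
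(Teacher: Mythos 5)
Your proposal is correct and follows essentially the same route as the paper: the paper also forms the coprime-to-$L$ Dirichlet series (your $\zeta_{\rho}(s)$ is literally the paper's $\zeta_{\rho}(s+1)$, since you normalize by $\Phi(f)/|f|$ up front instead of shifting $s\mapsto s+1$ at the end), computes the same Euler product and residue $\frac{1}{(\log q)\,\zeta_{\A}(2)}\prod_{P\mid L}(1+|P|^{-1})^{-1}$, and invokes Lemma \ref{auxiliary lemmas-1} with $\delta'=\epsilon$ to get the error $O(q^{(1+\epsilon)n})$. Your closing observation — that the finite product over $P\mid L$ contributes no poles in $\mathrm{Re}(s)>0$, so the refined form of Lemma \ref{auxiliary lemmas-1} applies for arbitrarily small $\epsilon$ — is exactly the point the paper's proof relies on as well.
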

\begin{proof}
Let $\zeta_{\Phi}(s)$ be the Dirichlet series associated to $\Phi$.
It is known that (\cite[Chap 2. equation (6)]{Ro02})
\begin{align}\label{auxiliary lemmas-2-001}
\zeta_{\Phi}(s) = \frac{\zeta_{\A}(s-1)}{\zeta_{\A}(s)}.
\end{align}
Let $\rho : \A^{+} \to \mb C$ be defined by $\rho(f) = \Phi(f)$ if $(L, f) = 1$ and $\rho(f) = 0$ otherwise,
and $\zeta_{\rho}(s)$ be the Dirichlet series associated to $\rho$:
\begin{align}
\zeta_{\rho}(s) := \sum_{\substack{f\in\A^{+} \\ (f,L)=1}} \Phi(f) |f|^{-s}
= \prod_{\substack{P\in\mb P \\ P\nmid L}}\left(1+\sum_{n=1}^{\infty} \Phi(P^n) |P|^{-ns}\right).
\end{align}
Then, we have
\begin{align*}
\zeta_{\rho}(s) = \zeta_{\Phi}(s) \prod_{\substack{P\in\mb P \\ P|L}}\left(\frac{1-|P|^{1-s}}{1-|P|^{-s}}\right),
\end{align*}
which has a simple pole at $s=2$ and is holomorphic in the region ${\rm Re}(s)>1$.
Hence, $\zeta_{\rho}(s+1)$ has a simple pole at $s=1$ and is holomorphic in the region ${\rm Re}(s)>0$.
Then $\zeta_{\rho}(s+1)$ is holomorphic in the region ${\rm Re}(s) \ge \epsilon$
except for a simple pole at $s=1$ with residue
$$
\alpha = \frac{1}{\zeta_{\A}(2) \log q} \prod_{P|L} (1+|P|^{-1})^{-1}.
$$
Applying Lemma \ref{auxiliary lemmas-1} to $\zeta_{\rho}(s+1)$ with $\delta' = \epsilon$, we find
$$
\sum_{\substack{f\in\A_{n}^{+} \\ (f,L)=1}} \Phi(f)
= \frac{1}{\zeta_{\A}(2)} q^{2n} \prod_{P|L} (1+|P|^{-1})^{-1} + O\left(q^{(1+\epsilon) n}\right).
$$
\end{proof}

Applying Lemma \ref{auxiliary lemmas-2} with $\epsilon = \frac{1}2$, we have the following corollary.

\begin{cor}\label{auxiliary lemmas-3}
We have
\begin{align}\label{auxiliary lemmas-3-1}
\sum_{L\in\A_{l}^{+}} \sum_{\substack{f\in\A_{n}^{+}\\ (f,L)=1}} \Phi(f)
= \frac{1}{\zeta_{\A}(2)} q^{2n+l} \sum_{\substack{D\in\A^{+}\\ \deg(D)\le l}} \frac{\mu(D)}{|D|} \prod_{P|D} \frac{1}{|P|+1}
+ O\left(q^{\frac{3n}{2}+l}\right).
\end{align}
\end{cor}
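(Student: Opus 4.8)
The plan is to apply Lemma \ref{auxiliary lemmas-2} to each $L \in \A_l^+$ separately, with the choice $\epsilon = \frac12$, and then sum over all $q^l$ monic polynomials of degree $l$. For a fixed $L$ this gives
$$
\sum_{\substack{f\in\A_{n}^{+} \\ (f,L)=1}} \Phi(f)
= \frac{1}{\zeta_{\A}(2)} q^{2n} \prod_{P|L} (1+|P|^{-1})^{-1} + O\left(q^{\frac{3n}{2}}\right),
$$
so summing over $L\in\A_l^+$ separates into a main term $\frac{1}{\zeta_{\A}(2)} q^{2n}\sum_{L\in\A_l^+}\prod_{P|L}(1+|P|^{-1})^{-1}$ and an accumulated error. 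Since $\#\A_l^+ = q^l$, the error will contribute $q^l\cdot O(q^{3n/2}) = O(q^{3n/2+l})$, matching the claim, provided the implied constant in Lemma \ref{auxiliary lemmas-2} is uniform over $L\in\A_l^+$. I expect this uniformity to be the one delicate point: each $L$ of degree $l$ has at most $l$ distinct prime divisors, so the auxiliary Euler factors $\frac{1-|P|^{-s}}{1-|P|^{-s-1}}$ attached to the primes $P\mid L$ in the proof of Lemma \ref{auxiliary lemmas-2} stay controlled on the contour $\mathrm{Re}(s)=\frac12$, and the constant furnished by Lemma \ref{auxiliary lemmas-1} can be taken independent of the particular $L\in\A_l^+$.

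The substantive computation is then the closed-form evaluation of the main-term sum $\sum_{L\in\A_l^+}\prod_{P|L}(1+|P|^{-1})^{-1}$. First I would rewrite each local factor as
$$
(1+|P|^{-1})^{-1} = \frac{|P|}{|P|+1} = 1 - \frac{1}{|P|+1},
$$
and expand the finite product over the distinct primes dividing $L$ by multiplicativity:
$$
\prod_{P|L}\left(1 - \frac{1}{|P|+1}\right) = \sum_{D \mid L} \mu(D)\prod_{P|D}\frac{1}{|P|+1},
$$
where $D$ runs over the monic divisors of $L$ and the factor $\mu(D)$ automatically restricts the sum to squarefree $D$ (the subsets of $\{P : P\mid L\}$). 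Substituting and interchanging the order of summation yields
$$
\sum_{L\in\A_l^+}\prod_{P|L}(1+|P|^{-1})^{-1}
= \sum_{D\in\A^+} \mu(D)\prod_{P|D}\frac{1}{|P|+1}\cdot \#\{L\in\A_l^+ : D\mid L\}.
$$

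The inner count is elementary: writing $L = DM$ with $M$ monic of degree $l-\deg(D)$ shows $\#\{L\in\A_l^+ : D\mid L\} = q^{l-\deg(D)} = q^l/|D|$ when $\deg(D)\le l$, and it vanishes otherwise. This truncates the $D$-sum to $\deg(D)\le l$ and gives
$$
\sum_{L\in\A_l^+}\prod_{P|L}(1+|P|^{-1})^{-1}
= q^l \sum_{\substack{D\in\A^+\\ \deg(D)\le l}} \frac{\mu(D)}{|D|}\prod_{P|D}\frac{1}{|P|+1}.
$$
Multiplying by $\frac{1}{\zeta_{\A}(2)}q^{2n}$ reproduces exactly the main term of \eqref{auxiliary lemmas-3-1}, and combining with the error bound above completes the proof. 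Thus the only genuine obstacle is the uniformity in $L$ flagged in the first paragraph; the remainder is routine M\"obius expansion together with the counting of multiples of $D$.
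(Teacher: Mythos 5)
Your proposal is correct and follows essentially the same route as the paper: the paper likewise applies Lemma \ref{auxiliary lemmas-2} with $\epsilon=\tfrac{1}{2}$ to each $L\in\A_{l}^{+}$ and sums over the $q^{l}$ choices of $L$, absorbing the accumulated errors into $O(q^{\frac{3n}{2}+l})$. The only difference is that the paper simply quotes the identity $\sum_{L\in\A_{l}^{+}}\prod_{P|L}(1+|P|^{-1})^{-1}=q^{l}\sum_{\deg(D)\le l}\frac{\mu(D)}{|D|}\prod_{P|D}\frac{1}{|P|+1}$ from \cite[Lemma 5.7]{AK12}, whereas you reprove it via the standard M\"obius expansion and the count of multiples of $D$ in $\A_{l}^{+}$ (and you also flag explicitly the uniformity in $L$ of the implied constant, which the paper passes over in silence).
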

\begin{proof}
By Lemma \ref{auxiliary lemmas-2}, we have
\begin{align*}
\sum_{L\in\A_{l}^{+}} \sum_{\substack{f\in\A_{n}^{+}\\ (f,L)=1}} \Phi(f)
&= \frac{1}{\zeta_{\A}(2)} q^{2n} \sum_{L\in\A_{l}^{+}} \prod_{P|L}(1+|P|^{-1})^{-1} + O\left(q^{\frac{3n}{2}+l}\right),
\end{align*}
and, by \cite[Lemma 5.7]{AK12},
$$
\sum_{L\in\A_{l}^{+}} \prod_{P|L}(1+|P|^{-1})^{-1} = q^{l} \sum_{\substack{D\in\A^{+}\\ \deg(D)\le l}} \frac{\mu(D)}{|D|} \prod_{P|D} \frac{1}{|P|+1}.
$$
So we get the result.
\end{proof}

Now we have the following lemma, which is a generalization of Lemmas 3.3, 3.4 and 3.5 of \cite{Ju14}.

\begin{lem}\label{auxiliary lemmas-3-2}
Let $h\in\{g-1, g\}$.
For any $s\in\mb C$ with ${\rm Re}(s)\ge \frac{1}2$, we have
\begin{enumerate}
\item
\begin{align}\label{auxiliary lemmas-3-2-001}
\sum_{\substack{D\in\A^{+}\\ \deg[D]\le[\frac{h}2]}}\frac{\mu(D)}{|D|^{2s}} \prod_{P|D} \frac{1}{|P|+1} = {\rm P}(2s) + O(q^{-sg}),
\end{align}
\item
\begin{align}\label{auxiliary lemmas-3-2-002}
\hspace{0.33em}\sum_{\substack{D\in\A^{+}\\ \deg(D)\le[\frac{h}2]}} \frac{\mu(D)}{|D|^{2-2s}}\prod_{P|D} \frac{1}{|P|+1} =
\begin{cases}
{\rm P}(2-2s) + O(q^{(s-1)g}) & \text{ if\,\,${\rm Re}(s) < 1$}, \\
O(g) & \text{ if\,\,$s=1$}, \\
O(q^{(s-1)g}) & \text{ if\,\,${\rm Re}(s) \ge  1 ~~(s\ne 1)$. }
\end{cases}
\end{align}
\end{enumerate}
\end{lem}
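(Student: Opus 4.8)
The plan is to recognize both sums as truncations of a single Euler product and then to estimate the tail or the partial sum according to the range of $s$. The starting point is a multiplicativity identity: since $\mu(D)$ vanishes unless $D$ is squarefree, and since for squarefree $D$ the summand $\mu(D)|D|^{-w}\prod_{P|D}(|P|+1)^{-1}$ is multiplicative in $D$, one has for ${\rm Re}(w)>0$
$$
\sum_{D\in\A^{+}}\frac{\mu(D)}{|D|^{w}}\prod_{P|D}\frac{1}{|P|+1}
=\prod_{P}\Big(1-\frac{1}{|P|^{w}(|P|+1)}\Big)={\rm P}(w),
$$
the product converging absolutely because its general factor is $1+O(|P|^{-{\rm Re}(w)-1})$. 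I would apply this with $w=2s$ for part (1) and $w=2-2s$ for part (2), so that the quantities to be estimated are exactly the truncations of these series to $\deg(D)\le[\frac{h}{2}]$.

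In the regimes where the relevant product converges absolutely — all of part (1), where ${\rm Re}(2s)\ge 1>0$, and part (2) with ${\rm Re}(s)<1$, where ${\rm Re}(2-2s)>0$ — I would bound the tail $\sum_{\deg(D)>[h/2]}$ directly. Using $\prod_{P|D}(|P|+1)^{-1}\le|D|^{-1}$ for squarefree $D$ together with the trivial count $\#\A_{m}^{+}=q^{m}$, the tail is dominated by a geometric series, namely $\sum_{m>[h/2]}q^{-2{\rm Re}(s)m}$ in case (1) and $\sum_{m>[h/2]}q^{-(2-2{\rm Re}(s))m}$ in case (2). These converge since ${\rm Re}(s)\ge\frac12$ (respectively ${\rm Re}(s)<1$), and because $2([\frac{h}{2}]+1)\ge g$ (as $h\ge g-1$) the leading term is pushed down to $O(q^{-{\rm Re}(s)g})$, respectively $O(q^{({\rm Re}(s)-1)g})$. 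This yields the stated $O(q^{-sg})$ for (1) and the stated ${\rm P}(2-2s)+O(q^{(s-1)g})$ for (2); note the implied constant in (2) may depend on $s$ and degrades as ${\rm Re}(s)\to 1^{-}$.

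The case (2) with ${\rm Re}(s)\ge 1$ is where the series no longer converges and the partial sum must be controlled on its own. When $s=1$ the summand is $\mu(D)\prod_{P|D}(|P|+1)^{-1}$, and each degree $m\le[\frac{h}{2}]$ contributes at most $\sum_{\deg(D)=m}|D|^{-1}=1$; since there are $[\frac{h}{2}]+1=O(g)$ admissible degrees this gives $O(g)$. When ${\rm Re}(s)>1$ the absolute values form an increasing geometric sum $\sum_{m\le[h/2]}q^{(2{\rm Re}(s)-2)m}$, dominated by its top term at $m=[\frac{h}{2}]\le\frac{g}{2}$, which is $O(q^{({\rm Re}(s)-1)g})$.

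I expect the genuine obstacle to be the critical line ${\rm Re}(s)=1$ with $s\ne 1$: there $|q^{(s-1)g}|=1$, so the claim demands a \emph{bounded} partial sum, whereas the crude counting above only produces $O(g)$. To close this gap I would exploit the factorization coming from the identity $\prod_{P}(1-z^{\deg P}/|P|)=1-z$ (a restatement of $\zeta_{\A}(s)^{-1}=1-q^{1-s}$): writing $z=q^{2s-2}$, one obtains $\sum_{D}\mu(D)\prod_{P|D}(|P|+1)^{-1}z^{\deg D}=(1-z)R(z)$, where $R(z)=\prod_{P}\frac{1-z^{\deg P}/(|P|+1)}{1-z^{\deg P}/|P|}$ has general factor $1+O\big((|z|/q^{2})^{\deg P}\big)$ and so is analytic on $|z|<q$. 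Consequently the Taylor coefficients $b_{n}$ of $R$ decay geometrically (so $\sum_{n}|b_{n}|<\infty$), and the coefficients $c_{n}$ of $(1-z)R(z)$ telescope as $c_{n}=b_{n}-b_{n-1}$; partial summation then gives $\sum_{n\le N}c_{n}z^{n}=b_{N}z^{N}+(1-z)\sum_{n\le N-1}b_{n}z^{n}$. On $|z|=1$ both pieces are $O(1)$ uniformly in $N$, which delivers the required $O(q^{(s-1)g})=O(1)$ on the critical line (and, incidentally, a bound stronger than the stated $O(g)$ at $s=1$, recovered by the same telescoping).
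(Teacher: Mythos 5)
Your proposal is correct, and for most of the lemma it coincides with the paper's own proof: part (1) and the case ${\rm Re}(s)<1$ of part (2) are handled there exactly as you do (Euler product identity for the complete sum, tail bounded via $\mu(D)^2\prod_{P|D}(|P|+1)^{-1}\le|D|^{-1}$ and a geometric series), the case $s=1$ by the same trivial counting giving $O(g)$, and the case ${\rm Re}(s)>1$ by the same domination of an increasing geometric sum by its top term.

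The genuine difference is the case you singled out as the obstacle, ${\rm Re}(s)=1$ with $s\ne1$, and here your extra work is not redundant but necessary. The paper does not treat this case separately; its final chain of estimates amounts to
\begin{align*}
\left|\sum_{\substack{D\in\A^{+}\\ \deg(D)\le[\frac{h}2]}}\frac{\mu(D)}{|D|^{2-2s}}\prod_{P|D}\frac{1}{|P|+1}\right|
\ \ll\ \sum_{n=0}^{[\frac{h}2]} q^{(2{\rm Re}(s)-2)n}\ \ll\ q^{({\rm Re}(s)-1)g},
\end{align*}
and the last inequality fails on the line ${\rm Re}(s)=1$: there every term of the middle sum has modulus $1$, so the middle sum equals $[\tfrac{h}2]+1\asymp \tfrac{g}2$, while the right-hand side is $O(1)$. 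Thus the paper's argument proves only $O(g)$ in that range (its implied constant also degrades as ${\rm Re}(s)\to1^{+}$), whereas the lemma asserts $O(q^{(s-1)g})=O(1)$. Once absolute values are taken inside, this loss is unavoidable, since the coefficients $c_{n}=\sum_{D\in\A_{n}^{+}}\mu(D)\prod_{P|D}(|P|+1)^{-1}$ satisfy only $|c_{n}|\le 1$; one must exploit cancellation in the $c_{n}$ themselves, which is exactly what your factorization $\sum_{n}c_{n}z^{n}=(1-z)R(z)$ with $R$ analytic in $|z|<q$, hence $c_{n}=b_{n}-b_{n-1}$ with $\sum_{n}|b_{n}|<\infty$, accomplishes via telescoping. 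So your proof establishes the lemma as stated on the critical line (and sharpens the $s=1$ case to $o(1)$), which the paper's own proof does not. In fairness to the paper, the weaker $O(g)$ bound is harmless downstream: Lemma \ref{auxiliary lemmas-5}, and through it Theorem \ref{mean-1}, only claims errors of size $O(gq^{(s-1)g})$ when ${\rm Re}(s)\ge1$, for which $O(g)$ suffices. But as a proof of Lemma \ref{auxiliary lemmas-3-2} itself, your treatment of the critical line is a genuine improvement rather than a stylistic variant.
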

\begin{proof}
(1) We can write
\begin{align}\label{auxiliary lemmas-3-001}
\sum_{\substack{D\in\A^{+}\\ \deg(D)\le[\frac{h}2]}}\frac{\mu(D)}{|D|^{2s}} \prod_{P|D} \frac{1}{|P|+1}
&= \sum_{D\in\A^{+}}\frac{\mu(D)}{|D|^{2s}} \prod_{P|D} \frac{1}{|P|+1}  \nonumber \\
&\hspace{2em}- \sum_{\substack{D\in\A^{+}\\ \deg(D)>[\frac{h}2]}}\frac{\mu(D)}{|D|^{2s}} \prod_{P|D} \frac{1}{|P|+1}.
\end{align}
Using the Euler product formula, we can show that
\begin{align*}
\sum_{D\in\A^{+}}\frac{\mu(D)}{|D|^{s}} \prod_{P|D} \frac{1}{|P|+1} = \prod_{P}\left(1-\frac{1}{|P|^s (|P|+1)}\right) = {\rm P}(s).
\end{align*}
Hence, we have
\begin{align}\label{auxiliary lemmas-3-002}
\sum_{D\in\A^{+}}\frac{\mu(D)}{|D|^{2s}} \prod_{P|D} \frac{1}{|P|+1} = {\rm P}(2s).
\end{align}
We also have
\begin{align}\label{auxiliary lemmas-3-003}
\sum_{\substack{D\in\A^{+}\\ \deg(D)>[\frac{h}2]}}\frac{\mu(D)}{|D|^{2s}} \prod_{P|D} \frac{1}{|P|+1}
&\ll \sum_{\substack{D\in\A^{+}\\ \deg(D)>[\frac{h}2]}}\frac{\mu(D)^2}{|D|^{2s}} \prod_{P|D} \frac{1}{|P|}
\ll \sum_{\substack{D\in\A^{+}\\ \deg(D)>[\frac{h}2]}} |D|^{-(2s+1)} \nonumber\\
&= \sum_{n>[\frac{h}2]} \sum_{D\in\A_{n}^{+}} q^{-(2s+1)n} = \sum_{n>[\frac{h}2]} q^{-2sn} \ll q^{-sg}.
\end{align}
By inserting \eqref{auxiliary lemmas-3-002} and \eqref{auxiliary lemmas-3-003} into \eqref{auxiliary lemmas-3-001}, we get \eqref{auxiliary lemmas-3-2-001}.

\noindent(2) If ${\rm Re}(s) < 1$, by a similar process as in the proof of (1), we can get
$$
\sum_{\substack{D\in\A^{+}\\ \deg(D)\le[\frac{h}2]}}\frac{\mu(D)}{|D|^{2-2s}} \prod_{P|D} \frac{1}{|P|+1} = {\rm P}(2-2s) + O\left(q^{(s-1)g}\right).
$$
If $s=1$, we have
\begin{align*}
\sum_{\substack{D\in\A^{+}\\ \deg(D)\le[\frac{h}2]}} \mu(D) \prod_{P|D} \frac{1}{|P|+1}
\le \sum_{\substack{D\in\A^{+}\\ \deg(D)\le[\frac{h}2]}} \mu^2(D) \prod_{P|D} \frac{1}{|P|}
\le \sum_{\substack{D\in\A^{+}\\ \deg(D)\le[\frac{h}2]}} |D|^{-1} \le g.
\end{align*}
Finally, if ${\rm Re}(s) \ge  1 ~~(s\ne 1)$, we have
\begin{align*}
\sum_{\substack{D\in\A^{+}\\ \deg(D)\le[\frac{h}2]}}\frac{\mu(D)}{|D|^{2-2s}} \prod_{P|D} \frac{1}{|P|+1}
&\ll \sum_{\substack{D\in\A^{+}\\ \deg(D)\le[\frac{h}2]}} \mu^{2}(D) |D|^{2s-2} \prod_{P|D} \frac{1}{|P|} \\
&\ll \sum_{\substack{D\in\A^{+}\\ \deg(D)\le[\frac{h}2]}} |D|^{2s-3}
= \sum_{n=0}^{[\frac{h}2]} q^{(2s-2)n} \ll q^{(s-1)g}.
\end{align*}
\end{proof}

Recall that, for a small $\varepsilon>0$,
$$
\bar{X}_{\varepsilon} = \{\tfrac{1}2\} \cup {X}_{\varepsilon} \cup \{s\in \mb C : {\rm Re}(s) \ge 1\},
$$
where ${X}_{\varepsilon} = \{s \in \mb C : \tfrac{1}2 \le {\rm Re}(s) < 1, |s-\tfrac{1}2|>\varepsilon\}$.
For $h\in\{g-1, g\}$ and $s \in\bar{X}_{\varepsilon}$, consider the following two summations
\begin{equation}\label{Hgs}
J_{h}(s) = \sum_{l=0}^{[\frac{h}2]} q^{(1-2s)l} \sum_{\substack{D\in\A^{+}\\ \deg(D)\le l}} \frac{\mu(D)}{|D|} \prod_{P|D} \frac{1}{|P|+1}
\end{equation}
and
\begin{equation}\label{Hgss}
\tilde{J}_{h}(s) = \sum_{l=0}^{[\frac{h}2]} q^{(2s-1)l} \sum_{\substack{D\in\A^{+}\\ \deg(D)\le l}} \frac{\mu(D)}{|D|} \prod_{P|D} \frac{1}{|P|+1},
\end{equation}
which will be used repeatedly in \S\ref{C-S-F-M}.
Recall that (see (5.29) in \cite{AK12})
\begin{equation}\label{AK12-5-29}
{\rm P'}(s) = {\rm P}(s) \log q \sum_{P\in\mb P} \frac{\deg(P)}{|P|^{s}(|P|+1)-1}.
\end{equation}

\begin{lem}\label{auxiliary lemmas-4}
Let $h\in\{g-1, g\}$.
For a small $\varepsilon>0$ and $s\in {X}_{\varepsilon} \cup \{s\in \mb C : {\rm Re}(s) \ge 1\}$, we have
\begin{align}\label{auxiliary lemmas-4-001}
J_{h}(s) = \zeta_{\A}(2s) \left({\rm P}(2s) - q^{(1-2s)([\frac{h}2]+1)} {\rm P}(1)\right) + O\left(q^{-sg}\right),
\end{align}
and for $s=\frac{1}2$, we have
\begin{align}\label{auxiliary lemmas-4-001}
J_{h}(\tfrac{1}2) = \left([\tfrac{h}2]+1+\frac{1}{\log q} \frac{{\rm P}'}{\rm P}(1)\right) {\rm P}(1) + O\left(g q^{-\frac{g}2}\right).
\end{align}
\end{lem}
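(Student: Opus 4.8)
The plan is to interchange the two summations defining $J_{h}(s)$ and evaluate the resulting inner geometric sum in $l$. Writing $L := [\frac{h}2]$ and $a(D) := \frac{\mu(D)}{|D|}\prod_{P|D}\frac{1}{|P|+1}$, interchanging the order of summation in \eqref{Hgs} gives
\[
J_{h}(s) = \sum_{\substack{D\in\A^{+}\\ \deg(D)\le L}} a(D) \sum_{l=\deg(D)}^{L} q^{(1-2s)l}.
\]
Two analytic inputs should be prepared first. The first is the bookkeeping identity $a(D)\, q^{(1-2s)\deg(D)} = \frac{\mu(D)}{|D|^{2s}}\prod_{P|D}\frac{1}{|P|+1}$, so that the truncated sum of these quantities is exactly the object controlled by \eqref{auxiliary lemmas-3-2-001}. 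The second is the tail bound $\sum_{\deg(D)>L}|a(D)| \ll \sum_{n>L} q^{-n} \ll q^{-L}$, obtained exactly as in \eqref{auxiliary lemmas-3-003}, which together with the Euler-product identity $\sum_{D\in\A^{+}} a(D) = {\rm P}(1)$ (the case $s=1$ of the identity proved just before \eqref{auxiliary lemmas-3-002}) yields $\sum_{\deg(D)\le L} a(D) = {\rm P}(1) + O(q^{-L})$.

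For $s\in {X}_{\varepsilon}\cup\{s : {\rm Re}(s)\ge 1\}$ we have $q^{1-2s}\ne 1$, so I would sum the geometric series, $\sum_{l=\deg(D)}^{L} q^{(1-2s)l} = (q^{(1-2s)\deg(D)} - q^{(1-2s)(L+1)})/(1-q^{1-2s})$, and recognize $\frac{1}{1-q^{1-2s}} = \zeta_{\A}(2s)$. Splitting accordingly gives
\[
J_{h}(s) = \zeta_{\A}(2s)\left(\sum_{\substack{D\in\A^{+}\\ \deg(D)\le L}} \frac{\mu(D)}{|D|^{2s}}\prod_{P|D}\frac{1}{|P|+1} - q^{(1-2s)(L+1)}\sum_{\substack{D\in\A^{+}\\ \deg(D)\le L}} a(D)\right),
\]
and inserting \eqref{auxiliary lemmas-3-2-001} and the tail estimate produces the main term $\zeta_{\A}(2s)\big({\rm P}(2s) - q^{(1-2s)([\frac h2]+1)}{\rm P}(1)\big)$. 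The step I expect to be the real work is the error control: one checks that $\zeta_{\A}(2s)=O_{\varepsilon}(1)$ on this region (this is exactly where the hypothesis $|s-\frac12|>\varepsilon$ enters, since $\zeta_{\A}(2s)$ blows up at $s=\frac12$), that the first error contributes $\zeta_{\A}(2s)\,O(q^{-sg})=O(q^{-sg})$, and that the second error $\zeta_{\A}(2s)\,q^{(1-2s)(L+1)}O(q^{-L})$ has magnitude $q^{\,1-2{\rm Re}(s)(L+1)}\le q\cdot q^{-{\rm Re}(s)g}$ because $2(L+1)\ge g$ for every $h\in\{g-1,g\}$; both therefore collapse to $O(q^{-sg})$.

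For $s=\frac12$ the factor $q^{1-2s}$ equals $1$, so the inner sum degenerates to its length $L-\deg(D)+1$, whence
\[
J_{h}(\tfrac12) = (L+1)\sum_{\substack{D\in\A^{+}\\ \deg(D)\le L}} a(D) - \sum_{\substack{D\in\A^{+}\\ \deg(D)\le L}} a(D)\deg(D).
\]
The first term is $(L+1)\big({\rm P}(1)+O(q^{-L})\big) = (L+1){\rm P}(1) + O(g\, q^{-\frac g2})$. For the second term I would differentiate the Dirichlet-series identity ${\rm P}(s) = \sum_{D} \mu(D)|D|^{-s}\prod_{P|D}(|P|+1)^{-1}$ termwise at $s=1$; since $\frac{d}{ds}|D|^{-s} = -(\log q)\deg(D)|D|^{-s}$, this gives $\sum_{D} a(D)\deg(D) = -{\rm P}'(1)/\log q$ (consistent with \eqref{AK12-5-29}), and truncating costs $O(g\,q^{-\frac g2})$ by the weighted tail bound $\sum_{n>L} n\, q^{-n}\ll L\, q^{-L}$. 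Hence $-\sum_{\deg(D)\le L} a(D)\deg(D) = \frac{{\rm P}'(1)}{\log q} + O(g\,q^{-\frac g2}) = \frac{1}{\log q}\frac{{\rm P}'}{\rm P}(1)\,{\rm P}(1) + O(g\,q^{-\frac g2})$, and adding the two terms yields the asserted formula. The only subtlety in this case is the justification of the termwise differentiation, which follows from the absolute and locally uniform convergence of the defining product for ${\rm Re}(s)>0$.
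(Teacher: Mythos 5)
Your proposal is correct and follows essentially the same route as the paper: interchange the two sums, evaluate the geometric sum in $l$ via $\zeta_{\A}(2s)=(1-q^{1-2s})^{-1}$ for $s\neq\tfrac12$, apply the truncated-sum estimate \eqref{auxiliary lemmas-3-2-001} together with the tail bound for the $|D|^{-1}$-weighted sum, and treat $s=\tfrac12$ separately through the degenerate length sum. The only cosmetic difference is that where the paper cites \cite[Lemma 5.11, Proposition 5.12]{AK12} for the two sums appearing at $s=\tfrac12$, you rederive the needed identity $\sum_{D}\frac{\mu(D)\deg(D)}{|D|}\prod_{P|D}\frac{1}{|P|+1}=-\frac{{\rm P}'(1)}{\log q}$ by termwise differentiation, which is exactly what those cited results amount to.
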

\begin{proof}
By change of summations, we have
\begin{align*}
J_{h}(s) = \sum_{\substack{D\in\A^{+}\\ \deg(D) \le [\frac{h}2]}} \frac{\mu(D)}{|D|} \prod_{P|D} \frac{1}{|P|+1}
\sum_{\deg(D) \le m \le [\frac{h}2]} q^{(1-2s)m}.
\end{align*}
If $s=\frac{1}2$, by \eqref{auxiliary lemmas-3-2-001} and \cite[Lemma 5.11, Proposition 5.12]{AK12}, we have
\begin{align*}
J_{h}(\tfrac{1}2) &= ([\tfrac{h}2]+1) \sum_{\substack{D\in\A^{+}\\ \deg(D) \le [\frac{h}2]}} \frac{\mu(D)}{|D|} \prod_{P|D} \frac{1}{|P|+1}
- \sum_{\substack{D\in\A^{+}\\ \deg(D) \le [\frac{h}2]}} \frac{\mu(D) \deg(D)}{|D|} \prod_{P|D} \frac{1}{|P|+1} \\
&= \left([\tfrac{h}2]+1+\frac{1}{\log q} \frac{{\rm P}'}{\rm P}(1)\right) {\rm P}(1) + O\left(g q^{-\frac{g}2}\right).
\end{align*}
For $s\in {X}_{\varepsilon} \cup \{s\in \mb C : {\rm Re}(s) \ge 1\}$, $\zeta_{\A}(2s)$ is bounded.
Now replacing $\sum_{\deg(D) \le m \le [\frac{h}2]} q^{(1-2s)m}$ by
$$
\frac{q^{(1-2s)\deg(D)} - q^{(1-2s)([\frac{h}2]+1)}}{1-q^{1-2s}}
= \zeta_{\A}(2s) \left(q^{(1-2s)\deg(D)} - q^{(1-2s)([\frac{h}2]+1)}\right),
$$
we have, by Lemma \ref{auxiliary lemmas-3-2} (1),
\begin{align*}
J_{h}(s) &= \zeta_{\A}(2s) \sum_{\substack{D\in\A^{+}\\ \deg(D)\le [\frac{h}2]}} \frac{\mu(D)}{|D|^{2s}} \prod_{P|D} \frac{1}{|P|+1} \nonumber \\
&\hspace{2em} - \zeta_{\A}(2s) q^{(1-2s)([\frac{h}2]+1)}
\sum_{\substack{D\in\A^{+}\\ \deg(D)\le [\frac{h}2]}} \frac{\mu(D)}{|D|} \prod_{P|D} \frac{1}{|P|+1} \nonumber \\
&= \zeta_{\A}(2s) \left({\rm P}(2s) - q^{(1-2s)([\frac{h}2]+1)} {\rm P}(1)\right) + O(q^{-sg}).
\end{align*}
\end{proof}

\begin{lem}\label{auxiliary lemmas-5}
Let $h\in\{g-1, g\}$.
For a small $\varepsilon>0$ and $s\in\bar{X}_{\varepsilon}$, we have
\begin{align*}
\tilde{J}_{h}(s) = \begin{cases}
\big([\frac{h}{2}]+1+\frac{1}{\log q} \frac{{\rm P}'}{\rm P}(1)\big) {\rm P}(1) + O(g q^{-\frac{g}2}) & \text{ if $s=\frac{1}2$,} \vspace{0.4em}\\
\zeta_{\A}(2s)\big(q^{(2s-1)[\frac{h}{2}]} {\rm P}(1) - q^{(1-2s)} {\rm P}(2-2s)\big) + O(q^{(s-1)g})
& \text{ if $s\in X_{\varepsilon}$,}\vspace{0.4em} \\
\zeta_{\A}(2s) q^{(2s-1)[\frac{h}{2}]} {\rm P}(1) + O(g q^{(s-1)g}) & \text{ if ${\rm Re}(s) \ge 1$.}
\end{cases}
\end{align*}
\end{lem}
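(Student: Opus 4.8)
The plan is to follow the proof of Lemma~\ref{auxiliary lemmas-4} almost verbatim; the only structural change is that the weight $q^{(1-2s)l}$ appearing in $J_{h}(s)$ is replaced in \eqref{Hgss} by its reciprocal $q^{(2s-1)l}$. As I will explain, this is precisely what makes part~(2) of Lemma~\ref{auxiliary lemmas-3-2} (the sum weighted by $|D|^{-(2-2s)}$) enter the computation where part~(1) was used for $J_{h}(s)$.

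First I would interchange the two summations in \eqref{Hgss} to obtain
$$
\tilde{J}_{h}(s) = \sum_{\substack{D\in\A^{+}\\ \deg(D)\le[\frac h2]}} \frac{\mu(D)}{|D|}\prod_{P|D}\frac{1}{|P|+1}\sum_{\deg(D)\le m\le[\frac h2]} q^{(2s-1)m}.
$$
For $s=\frac12$ the inner sum is simply $[\frac h2]-\deg(D)+1$, exactly as for $J_{h}(\frac12)$. Separating the constant term from the $\deg(D)$ term and applying \eqref{auxiliary lemmas-3-2-001} with $s=\frac12$ together with the evaluations of $\sum \mu(D)|D|^{-1}\prod_{P|D}(|P|+1)^{-1}$ and $\sum \mu(D)\deg(D)|D|^{-1}\prod_{P|D}(|P|+1)^{-1}$ from \cite[Lemma 5.11, Proposition 5.12]{AK12} then reproduces the first case of the statement, identical to the value of $J_{h}(\frac12)$.

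For $s\ne\frac12$ I would sum the geometric series and use the identity $\frac{1}{q^{2s-1}-1}=q^{1-2s}\zeta_{\A}(2s)$ to write the inner sum as $\zeta_{\A}(2s)\bigl(q^{(2s-1)[\frac h2]}-q^{(2s-1)(\deg(D)-1)}\bigr)$. Substituting this back, the first piece gives $\zeta_{\A}(2s)\,q^{(2s-1)[\frac h2]}\sum_{\deg(D)\le[\frac h2]}\mu(D)|D|^{-1}\prod_{P|D}(|P|+1)^{-1}$, which by \eqref{auxiliary lemmas-3-2-001} at $s=\frac12$ equals $\zeta_{\A}(2s)\,q^{(2s-1)[\frac h2]}\bigl({\rm P}(1)+O(q^{-g/2})\bigr)$; for the second piece the key observation is that $q^{(2s-1)\deg(D)}/|D|=|D|^{2s-2}$, so that it becomes $-\zeta_{\A}(2s)\,q^{1-2s}\sum_{\deg(D)\le[\frac h2]}\mu(D)|D|^{-(2-2s)}\prod_{P|D}(|P|+1)^{-1}$, to which \eqref{auxiliary lemmas-3-2-002} applies directly. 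In the range $s\in X_{\varepsilon}$ (where ${\rm Re}(s)<1$) this yields the main term $\zeta_{\A}(2s)\bigl(q^{(2s-1)[\frac h2]}{\rm P}(1)-q^{1-2s}{\rm P}(2-2s)\bigr)$, and for ${\rm Re}(s)\ge1$ the second piece is swallowed into the error, leaving $\zeta_{\A}(2s)\,q^{(2s-1)[\frac h2]}{\rm P}(1)$.

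The main obstacle will be the uniform bookkeeping of the error terms in the region ${\rm Re}(s)\ge1$. There the prefactor $q^{(2s-1)[\frac h2]}$ has modulus growing like $q^{({\rm Re}(s)-\frac12)g}$, so when it multiplies the $O(q^{-g/2})$ error coming from \eqref{auxiliary lemmas-3-2-001} one must check that the product is genuinely $O(q^{(s-1)g})$; likewise the $O(g)$ bound of \eqref{auxiliary lemmas-3-2-002} at $s=1$ is exactly what forces the error to take the form $O(g\,q^{(s-1)g})$ rather than $O(q^{(s-1)g})$ on this range. Throughout one uses, as in Lemma~\ref{auxiliary lemmas-4}, that $\zeta_{\A}(2s)$ and $q^{1-2s}$ remain bounded on $X_{\varepsilon}\cup\{{\rm Re}(s)\ge1\}$, the exclusion $|s-\frac12|>\varepsilon$ being needed precisely to keep $\zeta_{\A}(2s)$ away from its pole.
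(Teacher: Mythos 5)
Your proposal is correct and follows essentially the same route as the paper's own proof: interchange the two sums, treat $s=\tfrac12$ exactly as for $J_{h}(\tfrac12)$ via \eqref{auxiliary lemmas-3-2-001} and \cite[Lemma 5.11, Proposition 5.12]{AK12}, and for $s\ne\tfrac12$ sum the geometric series to split $\tilde{J}_{h}(s)$ into the two pieces handled by \eqref{auxiliary lemmas-3-2-001} and \eqref{auxiliary lemmas-3-2-002}. Your explicit error bookkeeping (in particular that $q^{(2s-1)[\frac h2]}\cdot O(q^{-g/2})=O(q^{(s-1)g})$ and that the $O(g)$ bound at $s=1$ produces the factor $g$ in the range ${\rm Re}(s)\ge 1$) is exactly what the paper leaves implicit in the phrase ``the results follow from'' those two estimates.
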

\begin{proof}
We can write
\begin{align*}
\tilde{J}_{h}(s) = \sum_{\substack{D\in\A^{+}\\ \deg(D) \le [\frac{h}2]}} \frac{\mu(D)}{|D|} \prod_{P|D} \frac{1}{|P|+1}
\sum_{\deg(D) \le m \le [\frac{h}2]} q^{(2s-1)m}.
\end{align*}
If $s=\frac{1}2$, by \eqref{auxiliary lemmas-3-2-001} and \cite[Lemma 5.11, Proposition 5.12]{AK12}, we have
\begin{align*}
\tilde{J}_{h}(\tfrac{1}2) &= ([\tfrac{h}2]+1) \sum_{\substack{D\in\A^{+}\\ \deg(D) \le [\frac{h}2]}} \frac{\mu(D)}{|D|} \prod_{P|D} \frac{1}{|P|+1}
- \sum_{\substack{D\in\A^{+}\\ \deg(D) \le [\frac{h}2]}} \frac{\mu(D) \deg(D)}{|D|} \prod_{P|D} \frac{1}{|P|+1}  \\
&= \left([\tfrac{h}2]+1+\frac{1}{\log q} \frac{{\rm P}'}{\rm P}(1)\right) {\rm P}(1) + O\left(g q^{-\frac{g}2}\right).
\end{align*}
For $s \ne \frac{1}2$, as in the proof of Lemma \ref{auxiliary lemmas-4}, we have
\begin{align*}
\tilde{J}_{h}(s) &= \zeta_{\A}(2s) q^{(2s-1)[\frac{h}2]}
\sum_{\substack{D\in\A^{+}\\ \deg(D)\le [\frac{h}2]}} \frac{\mu(D)}{|D|} \prod_{P|D} \frac{1}{|P|+1} \nonumber \\
&\hspace{2em} - \zeta_{\A}(2s) q^{(1-2s)}\sum_{\substack{D\in\A^{+}\\ \deg(D)\le [\frac{h}2]}} \frac{\mu(D)}{|D|^{2-2s}} \prod_{P|D} \frac{1}{|P|+1}.
\end{align*}
Now, the results follows from \eqref{auxiliary lemmas-3-2-001} and \eqref{auxiliary lemmas-3-2-002}.
\end{proof}

\subsection{Two sums}
For each $M\in\mc B$, let $\mc C_{M}$ be the set of rational functions $u \in \mc C$ whose denominator divides $M$,
$\mc E_{M} = \mc E \cap \mc C_{M}$ and $\mc F_{M} = \mc F \cap \mc C_{M}$.
Note that $\mc C_{M}$ and $\mc E_{M}$ are abelian groups under addition and $\#\mc E_{M} = |\tilde{M}|$,
$\#\mc F_{M} = \Phi(\tilde{M})$.
In fact, $\mc C_{M}$ can be defined for any monic polynomial $M$, not necessarily in $\mc B$.
Note that $\mc E_{n}$ (resp. $\mc F_{n}$) is the disjoint union of $\mc E_{M}$ (resp. $\mc F_{M}$) with $M\in\mc B_{n}$.

For any $f\in\A^{+}$ and $M\in\mc B$, we define
$$
U_{f, M} := \sum_{u\in\mc C_{M}} \left\{\frac{u}{f}\right\}, \quad \Gamma_{f, M} := \sum_{u\in\mc E_{M}} \left\{\frac{u}{f}\right\}
\quad \text{ and }\quad T_{f, M} := \sum_{u\in\mc F_{M}} \left\{\frac{u}{f}\right\}.
$$

\begin{lem}\label{Two sums-1}
Let $f\in\A^{+}$ and $M\in\mc B$.
If $\deg(f) \le \deg(t(M))$, $\gcd(M,f)=1$ and $f$ is not a perfect square, then we have $U_{f,M} = \Gamma_{f,M} = 0$.
\end{lem}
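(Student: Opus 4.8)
The plan is to read both $U_{f,M}$ and $\Gamma_{f,M}$ as sums of a character over a finite abelian group and then prove that character is nontrivial. Since the Hasse symbol is additive in its first slot and $\gcd(M,f)=1$ forces $f$ to be prime to every denominator occurring in $\mc C_M$, the map $u\mapsto\{\tfrac{u}{f}\}$ is a homomorphism from the additive group $\mc C_M$ (resp.\ $\mc E_M$) to $\{\pm1\}$ that never vanishes; by orthogonality $U_{f,M}$ (resp.\ $\Gamma_{f,M}$) equals $\#\mc C_M$ (resp.\ $\#\mc E_M$) if it is trivial and $0$ otherwise. Because $\{\tfrac{u}{f}\}$ depends only on the residues $\nu_Q(f)\bmod 2$, I would replace $f$ by its odd part $g:=\prod_{\nu_Q(f)\ \mr{odd}}Q$, a squarefree monic polynomial with $\{\tfrac{u}{f}\}=\{\tfrac{u}{g}\}$; the hypothesis that $f$ is not a square gives $\deg g\ge1$, while $\gcd(g,M)=1$ and $\deg g\le\deg f\le\deg t(M)$ persist. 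Since $\mc E_M\subseteq\mc C_M$, it then suffices to exhibit one $u\in\mc E_M$ with $\{\tfrac{u}{g}\}=-1$: a single witness shows nontriviality on both groups at once.

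To localize the symbol at the poles of $u$, I would apply the residue theorem on $\PP^1$ to the rational differential $\eta:=u\,\tfrac{dg}{g}$, writing $\mr{Tr}$ for the trace to $\bF_2$. A direct local computation gives $\mr{Tr}\,\mr{Res}_Q(\eta)=\nu_Q(g)\,[u,Q)$ at each prime $Q\mid g$ (where $u$ is a unit), so $[u,g)=\sum_{Q\mid g}\mr{Tr}\,\mr{Res}_Q(\eta)$. The residue theorem $\sum_v\mr{Tr}\,\mr{Res}_v(\eta)=0$ flips this to the remaining places: at finite primes dividing neither $g$ nor $M$ the differential $\eta$ is regular, and at $\infty$ one has $\nu_\infty(u)\ge1$ (as $u\in\mc C$ is proper) while $\tfrac{dg}{g}$ has at worst a simple pole, so $\eta$ is regular there too. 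Hence all those terms vanish and
\[
[u,g)=\sum_{P\mid M}\mr{Tr}\,\mr{Res}_P\!\big(u\,\tfrac{dg}{g}\big),
\]
where each summand depends only on the principal part of $u$ at $P$, so the prime-by-prime pieces of $u\in\mc E_M$ decouple.

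Now I would evaluate the local terms on the building blocks $u=\tfrac{A}{P^{2i-1}}$ of $\mc E_M$ ($\deg A<\deg P$, $1\le i\le\ell_P$). Writing $dT=(P')^{-1}dP$ (legitimate since $P'$ is a unit at $P$) and expanding the $P$-adic unit $\tfrac{g'}{gP'}=\sum_{n\ge0}b_n P^n$ with $b_n\in\A/P$, one gets $\mr{Res}_P(u\,\tfrac{dg}{g})=(A\bmod P)\,b_{2i-2}$; by nondegeneracy of the trace form, $\{\tfrac{\cdot}{g}\}$ is trivial on $\mc E_M$ iff for every $P\mid M$ the \emph{even}-indexed coefficients $b_0,b_2,\dots,b_{\nu_P(M)-1}$ vanish (recall $\nu_P(M)=2\ell_P-1$ is odd). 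The elegant finish: $\nu_P(g')=\nu_P(\tfrac{g'}{gP'})$ is the index of the lowest nonzero $b_n$, and in characteristic $2$ the derivative $g'=G^2$ is a perfect square (nonzero, as $g$ is squarefree and nonconstant), so $\nu_P(g')$ is always \emph{even}. Triviality would therefore force $\nu_P(g')\ge\nu_P(M)$ for all $P\mid M$, i.e.\ $M\mid g'$; since $\nu_P(M)$ is odd and $\nu_P(g')$ even this upgrades to $t(M)\mid g'=G^2$, whence $\deg t(M)\le\deg g'<\deg g\le\deg t(M)$, a contradiction. Thus the character is nontrivial and both sums vanish.

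I expect the main obstacle to be the second step: justifying the passage from the arithmetically defined symbol $[u,g)$ to the residue expression and correctly bookkeeping the local residues — in particular the factor $(P')^{-1}$ from $dT=(P')^{-1}dP$ and the verification that it is exactly the \emph{even}-indexed coefficients that the odd denominators $P^{2i-1}$ of $\mc E_M$ detect. Once that dictionary is in place the combinatorial heart is clean: $\mc E_M$ sees even coefficients while $g'$ has even $P$-valuation, so the hypothesis $\deg g\le\deg t(M)$ is sharp. This matching of ``odd powers in $\mc E_M$'' with ``$g'$ a square'' is precisely what makes $t(M)$ (rather than $M$ or $\tilde M$) the correct quantity in the statement.
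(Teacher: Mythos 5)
Your route is genuinely different from the paper's, and most of it checks out: the reduction of $f$ to its squarefree ``odd part'' $g$, the orthogonality setup on the groups $\mc C_M\supseteq \mc E_M$, the Schmid-type identity $[u,Q)=\mr{Tr}\,\mr{Res}_Q\big(u\,\tfrac{dg}{g}\big)$ at primes $Q\mid g$, the regularity of $\eta$ at $\infty$ and at primes away from $gM$, and the characteristic-$2$ fact $g'=G^2\neq 0$ are all correct. The genuine gap is the local formula at $P\mid M$: the claim $\mr{Res}_P\big(u\,\tfrac{dg}{g}\big)=(A\bmod P)\,b_{2i-2}$ is false in general, and with it your characterization ``trivial iff the even-indexed $b_n$ vanish''. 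The reason is that residues must be read off from the expansion whose coefficients lie in the canonical coefficient field (the Teichm\"uller-type lift of $\A/P$) inside the completion, and the polynomial representative of degree $<\deg P$ is \emph{not} an element of that lift; so in the product $A\cdot\sum_n b_n P^n$ the factor $A$ has its own nontrivial expansion and cross terms appear. Concretely, over $\bF_2$ with $P=T^2+T+1$ one has $T=[\omega]+P+P^2+P^4+\cdots$, where $[\omega]$ is the canonical lift of $T\bmod P$ (the root of $x^2+x+1$ in the completion); hence with $A=T$, $\psi=1$ (so $b_0=1$, $b_n=0$ for $n\ge1$) and $i=2$, the true residue is $\mr{Res}_P\big(T\,\tfrac{dP}{P^3}\big)=1$, while your recipe outputs $(T\bmod P)\cdot b_2=0$. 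Consequently ``triviality forces $b_0=b_2=\cdots=0$'' does not follow as stated: the correct condition is triangular, namely triviality at slot $i$ forces $\psi_{2i-2}$ to equal an explicit $\bF_2$-linear expression in $\psi_0,\dots,\psi_{2i-3}$, not to vanish.

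Fortunately the error is repairable, because your final argument only uses the formula at the lowest nonvanishing coefficient, where it is true: every cross term in $\mr{Res}_P\big(\tfrac{A\psi}{P^{2i-1}}dP\big)=\sum_{j+n=2i-2}A_j\psi_n$ involves some $\psi_n$ with $n<2i-2$, so if $\nu_P(\psi)=\nu_P(g')$ were an even number $2i_0-2\le \nu_P(M)-1$, then at slot $i_0$ all cross terms vanish, triviality gives $\mr{Tr}\big((A\bmod P)\,\psi_{2i_0-2}\big)=0$ for all $A$, and nondegeneracy of the trace form forces $\psi_{2i_0-2}=0$, a contradiction. Thus triviality still implies $\nu_P(g')\ge \nu_P(M)+1$ for all $P\mid M$, i.e.\ $t(M)\mid g'$, and your degree contradiction $\deg t(M)\le\deg g'<\deg g\le\deg t(M)$ goes through; with that one step rewritten your proof is complete. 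You should be aware, though, that the paper's own proof is far shorter and avoids all local analysis: it notes that $\mc C_{t(M)}$ contains a complete residue system modulo $f$ (where nontriviality of $a\mapsto\{\tfrac{a}{f}\}$ for non-square $f$ is immediate), and that Artin--Schreier normalization maps $\mc C_{t(M)}$ onto $\mc E_M$ preserving the symbol, so both sums vanish by orthogonality --- the witness never has to be located inside $\mc E_M$ at all, which is exactly the difficulty your residue machinery was built to overcome.
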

\begin{proof}
It is easy to see that if $\deg(f) \le \deg(t(M))$, then $\mc C_{t(M)}$ contains a complete system modulo $f$.
Note that every element in $\mc E_{M}$ can be obtained by normalization from an element of $\mc C_{t(M)}$
and the normalization process preserves the value $\{\frac{u}{f}\}$.
Then, since $\mc C_{t(M)}$ and $\mc E_{M}$ are abelian groups, we get the result.
\end{proof}

Note that if $\gcd(M,f) = 1$ and there is a $u\in\mc E_{M}$ with $\{\frac{u}{f}\} = -1$, then $\Gamma_{f, M} = 0$.
Thus, we have $\Gamma_{f, M} = 0$ or $|\tilde{M}| = q^{\deg(t(M))/2}$.

We have
\begin{align*}
\Gamma_{f,M} = \sum_{\substack{N\in\mc B \\ N|M}} T_{f,N}
\end{align*}
and thus
\begin{align*}
T_{f,M} = \sum_{N|\tilde{M}} \mu(N) \Gamma_{f,[M/N]},
\end{align*}
where $[M/N] = ({\tilde{M}}/{N})^{*}$ for $N|\tilde{M}$.
Recall that $\tilde{M} := \sqrt{t(M)}$ and $M^* := {M^2}/{r(M)}$.

\begin{lem}\label{lem-333}
Let $M, N \in\mc B$ with $N|M$.
If $\Gamma_{f,N}=0$, then $\Gamma_{f,M}=0$.
Thus, if $\Gamma_{f,M} \ne 0$, then $\Gamma_{f,N} \ne 0$.
\end{lem}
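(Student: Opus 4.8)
The plan is to exploit the abelian group structure of $\mc E_M$ together with the additivity of the symbol $\{\frac{\cdot}{f}\}$ in its first argument. The starting point is that $N\mid M$ forces $\mc E_N\subseteq\mc E_M$ as a subgroup: since $M,N\in\mc B$, every prime $P\mid N$ also divides $M$ with $\nu_P(N)\le\nu_P(M)$ both odd, so the local indices satisfy $\ell_P(N)\le\ell_P(M)$, and hence every partial-fraction term $\frac{A_{P,i}}{P^{2i-1}}$ admissible in an element of $\mc E_N$ is also admissible in $\mc E_M$.

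First I would extract from the hypothesis $\Gamma_{f,N}=0$ a single witness $w\in\mc E_N$ with $\{\frac{w}{f}\}=-1$. This is immediate: every term of $\Gamma_{f,N}=\sum_{u\in\mc E_N}\{\frac{u}{f}\}$ lies in $\{-1,0,1\}$, and the term at $u=0$ equals $\{\frac{0}{f}\}=1$; were no term equal to $-1$, the sum would be strictly positive, contradicting $\Gamma_{f,N}=0$. Fix such a $w$; in particular $\{\frac{w}{f}\}\ne0$, so $f$ is prime to the denominator of $w$, i.e. the $P$-component $w_P$ vanishes for every prime $P\mid f$. By the inclusion above, $w\in\mc E_M$.

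Next I would translate by $w$. Since $\mc E_M$ is an abelian group containing $w$, the map $u\mapsto u+w$ is a bijection of $\mc E_M$, so $\Gamma_{f,M}=\sum_{u\in\mc E_M}\{\frac{u+w}{f}\}$. The heart of the argument is the identity $\{\frac{u+w}{f}\}=\{\frac{w}{f}\}\{\frac{u}{f}\}=-\{\frac{u}{f}\}$, valid for every $u\in\mc E_M$; summing it over $u$ gives $\Gamma_{f,M}=-\Gamma_{f,M}$, so $2\,\Gamma_{f,M}=0$, whence $\Gamma_{f,M}=0$ and the contrapositive statement follows at once.

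Establishing that multiplicativity identity is the one delicate point, since $\{\frac{\cdot}{f}\}$ is a genuine homomorphism only away from the primes dividing $f$, where it may take the value $0$. I would argue prime-by-prime using the unique partial-fraction decomposition $u=\sum_{P\mid M}u_P$, for which $\{\frac{u}{f}\}=\prod_{P\mid M}\{\frac{u_P}{f}\}$. For $P\nmid f$ the symbol restricts to a homomorphism, so $\{\frac{u_P+w_P}{f}\}=\{\frac{u_P}{f}\}\{\frac{w_P}{f}\}$ by additivity of the Hasse symbol; for $P\mid f$ we have $w_P=0$, so $u_P+w_P=u_P$ and both sides collapse to $\{\frac{u_P}{f}\}$, the possible zero value being harmless because it then occurs on both sides. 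Multiplying these local identities over all $P\mid M$ yields the global identity, completing the proof.
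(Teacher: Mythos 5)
Your proof is correct. The paper in fact states this lemma with no proof at all --- it is implicitly justified by the remark just before it, that the existence of some $u\in\mc E_{M}$ with $\{\frac{u}{f}\}=-1$ forces $\Gamma_{f,M}=0$ via the group structure of $\mc E_{M}$ --- and your argument (producing a witness $w\in\mc E_{N}\subseteq\mc E_{M}$ with $\{\frac{w}{f}\}=-1$, translating by it, and checking the multiplicativity identity prime-by-prime, with $w_{P}=0$ at every $P\mid f$) is precisely the careful implementation of that intended idea.
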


\begin{lem}\label{lem-444}
Let $M, N \in \mc B$.
If $\gcd(M,N)=1$, then $\Gamma_{f, MN} = \Gamma_{f, M} \Gamma_{f, N}$.
Thus, for $M = P_{1}^{e_{1}} \cdots P_{r}^{e_{r}} \in \mc B$, we have $\Gamma_{f, M} = \Gamma_{f, P_{1}^{e_{1}}} \cdots \Gamma_{f, P_{r}^{e_{r}}}$.
\end{lem}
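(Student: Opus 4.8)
The plan is to exploit the additive group structure of $\mc E_M$ together with the fact, noted just after the definition of the quadratic symbol, that $\{\frac{\cdot}{f}\}$ behaves as a homomorphism in its first variable: the additivity of the Hasse symbol $[\,\cdot\,,f)$ in $\bF_{2}$ gives $\{\frac{u+v}{f}\} = \{\frac{u}{f}\}\{\frac{v}{f}\}$ whenever the symbols are defined. The coprimality hypothesis $\gcd(M,N)=1$ is exactly what lets me factor the index set $\mc E_{MN}$ as a product, after which the sum splits.

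First I would record the decomposition $\mc E_{MN} = \mc E_M \oplus \mc E_N$. Since $\gcd(M,N)=1$ and $M,N\in\mc B$, we have $MN\in\mc B$, and for $P\mid M$ (resp. $P\mid N$) the exponent $\nu_P(MN)$ equals $\nu_P(M)$ (resp. $\nu_P(N)$), so the integers $\ell_P$ attached to $MN$ agree with those attached to $M$ or to $N$. Reading off the normalized expansion $u = \sum_{P\mid MN}\sum_{i=1}^{\ell_P}\frac{A_{P,i}}{P^{2i-1}}$ of an element $u\in\mc E_{MN}$, each such $u$ splits uniquely as $u = u_M + u_N$, where $u_M := \sum_{P\mid M}\sum_i\frac{A_{P,i}}{P^{2i-1}}\in\mc E_M$ and $u_N := \sum_{P\mid N}\sum_i\frac{A_{P,i}}{P^{2i-1}}\in\mc E_N$; conversely $(u_M,u_N)\mapsto u_M+u_N$ is the inverse. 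This yields a bijection $\mc E_{MN}\xrightarrow{\sim}\mc E_M\times\mc E_N$.

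Next I would substitute this into the definition of $\Gamma_{f,MN}$ and apply multiplicativity of the symbol in the first variable:
$$
\Gamma_{f,MN} = \sum_{u\in\mc E_{MN}}\left\{\frac{u}{f}\right\}
= \sum_{u_M\in\mc E_M}\sum_{u_N\in\mc E_N}\left\{\frac{u_M+u_N}{f}\right\}
= \sum_{u_M\in\mc E_M}\sum_{u_N\in\mc E_N}\left\{\frac{u_M}{f}\right\}\left\{\frac{u_N}{f}\right\},
$$
so the double sum factors as $\Gamma_{f,M}\,\Gamma_{f,N}$. The final clause then follows by induction: writing $M = P_1^{e_1}\cdots P_r^{e_r}\in\mc B$, each factor $P_i^{e_i}$ lies in $\mc B$ (since $e_i=\nu_{P_i}(M)$ is odd) and the factors are pairwise coprime, so iterating the two-factor identity gives $\Gamma_{f,M} = \Gamma_{f,P_1^{e_1}}\cdots\Gamma_{f,P_r^{e_r}}$.

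The one point needing care, and the main obstacle, is the identity $\{\frac{u_M+u_N}{f}\} = \{\frac{u_M}{f}\}\{\frac{u_N}{f}\}$ when $f$ is not coprime to one of the denominators, so that a symbol takes the value $0$. Here I would note that the denominators of $u_M$ and $u_N$ have disjoint prime supports (among primes dividing $M$, resp. $N$), so the denominator of $u_M+u_N$ in lowest terms is their product; hence $f$ is prime to it precisely when $f$ is prime to both. Thus if, say, $f$ is not prime to the denominator of $u_M$, then $\{\frac{u_M}{f}\}=0$ and $\{\frac{u_M+u_N}{f}\}=0$, so the identity still holds, while if $f$ is prime to both denominators it reduces to the plain additivity of $[\,\cdot\,,f)$. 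With this case analysis settled, the factorization above is valid termwise and the lemma follows.
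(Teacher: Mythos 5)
Your proof is correct and follows essentially the same route as the paper: the unique decomposition $u = u_M + u_N$ of elements of $\mc E_{MN}$ with $u_M \in \mc E_M$, $u_N \in \mc E_N$, combined with the multiplicativity $\{\frac{u_M+u_N}{f}\} = \{\frac{u_M}{f}\}\{\frac{u_N}{f}\}$, which is exactly the two facts the paper cites. Your additional check that the identity persists when a symbol vanishes (via disjointness of the prime supports of the denominators) is a point the paper leaves implicit, and it is handled correctly.
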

\begin{proof}
This follows from the fact that $u\in \mc E_{MN}$ can be uniquely written as $u_{1} + u_{2}$ with $u_{1} \in \mc E_{M}$ and $u_{2} \in \mc E_{N}$
and the fact that $\{\frac{u_{1}+u_{2}}{f}\} = \{\frac{u_{1}}{f}\} \{\frac{u_{2}}{f}\}$.
\end{proof}

\begin{cor}
For each positive integer $m$ and a non-square $f \in \A_{d}^{+}$,
there exist at most ${{[d/2]}\choose m}$ polynomials $M\in\A_{m}^{+}$ with $\Gamma_{f, M^*} \ne 0$ and $\gcd(M,f)=1$.
\end{cor}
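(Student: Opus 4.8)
The plan is to show that the polynomials $M$ in question are exactly the degree-$m$ monic divisors of one fixed polynomial $G$ with $\deg(G)\le[d/2]$, and then to invoke the elementary bound on the number of divisors of a prescribed degree.

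First I would encode the non-vanishing condition as a divisibility. For a prime $P$ and $j\ge1$ the polynomial $P^{2j-1}$ lies in $\mc B$, and $P^{2l-1}\mid P^{2j-1}$ for $l\le j$; so Lemma~\ref{lem-333} shows that $\{j\ge1:\Gamma_{f,P^{2j-1}}\ne0\}$ is an initial segment $\{1,\dots,\phi_P\}$ (empty when $\Gamma_{f,P}=0$, in which case we set $\phi_P=0$). Since $M^{*}=\prod_{P\mid M}P^{2\nu_P(M)-1}$, the multiplicativity of Lemma~\ref{lem-444} gives $\Gamma_{f,M^{*}}=\prod_{P\mid M}\Gamma_{f,P^{2\nu_P(M)-1}}$. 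Hence, for $M$ with $\gcd(M,f)=1$, the condition $\Gamma_{f,M^{*}}\ne0$ is equivalent to $\nu_P(M)\le\phi_P$ for every $P$, that is, to $M\mid G$, where $G:=\prod_{P\nmid f}P^{\phi_P}$.

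The key step is to bound $\deg(G)$. Applying Lemma~\ref{Two sums-1} to a single prime power shows that $\Gamma_{f,P^{2j-1}}\ne0$ forces $d>\deg(t(P^{2j-1}))=2j\deg(P)$; in particular only finitely many $P$ have $\phi_P\ge1$ and each $\phi_P$ is finite, so $N:=\prod_{P\nmid f}P^{2\phi_P-1}$ is a well-defined element of $\mc B$, coprime to $f$, with $\tilde N=\prod_P P^{\phi_P}=G$. By Lemma~\ref{lem-444} and the definition of $\phi_P$, every factor of $\Gamma_{f,N}=\prod_P\Gamma_{f,P^{2\phi_P-1}}$ is non-zero, so $\Gamma_{f,N}\ne0$. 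Since $f$ is not a perfect square and $\gcd(N,f)=1$, the contrapositive of Lemma~\ref{Two sums-1} yields $\deg(t(N))<d$; as $t(N)=\tilde N^{2}=G^{2}$ this reads $2\deg(G)<d$, whence $\deg(G)\le[d/2]$.

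Finally I would count. By the first step the admissible $M$ of degree $m$ are precisely the degree-$m$ monic divisors of $G$. For any monic $G=\prod_i P_i^{e_i}$ of degree $n$, the number of its monic divisors of degree $m$ is the coefficient of $x^{m}$ in $\prod_i(1+x^{\deg P_i}+\cdots+x^{e_i\deg P_i})$; each factor is dominated coefficientwise by $(1+x^{\deg P_i})^{e_i}$, hence by $(1+x)^{e_i\deg P_i}$, so the whole product is dominated by $(1+x)^{n}$ and the count is at most $\binom{n}{m}$. With $n=\deg(G)\le[d/2]$ this gives the desired bound $\binom{[d/2]}{m}$. I expect the main difficulty to be exactly the packaging in the third paragraph: one must combine the separate prime-power non-vanishing conditions into the single statement $\Gamma_{f,N}\ne0$ for the maximal admissible $N\in\mc B$, so that the global degree inequality of Lemma~\ref{Two sums-1} applies all at once; once this is done the remaining divisor estimate is routine.
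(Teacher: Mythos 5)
Your proof is correct and takes essentially the same route as the paper's: both arguments use Lemmas~\ref{lem-333} and~\ref{lem-444} together with the degree bound of Lemma~\ref{Two sums-1} to produce a single polynomial of degree at most $[d/2]$ that every admissible $M$ must divide (your canonical maximal $G$; the paper instead takes the least common multiple of the admissible collection), and then bound the number of its monic degree-$m$ divisors by $\binom{[d/2]}{m}$. The only real difference is presentational: you prove the divisor-counting estimate explicitly via generating functions, whereas the paper leaves that step implicit in the assertion that more than $\binom{[d/2]}{m}$ degree-$m$ divisors would force the lcm to have degree exceeding $[d/2]$.
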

\begin{proof}
Let $M_{1}, \ldots, M_{s} \in \A_{m}^{+}$ be distinct polynomials such that $\Gamma_{f, M_{i}^{*}} \ne 0$, $1 \le i \le s$.
For $M := {\rm lcm}(M_{1}, \ldots, M_{s})$, by Lemma \ref{lem-333} and Lemma \ref{lem-444}, we have $\Gamma_{f, M^*} \ne 0$, and so $2 \deg(M) < d$.
Note that if $s>{{[d/2]}\choose m}$, then $\deg(M) > m + [\frac{d}2] - m$, which implies that $2 \deg(M) \ge d$ and so $\Gamma_{f, M^*} = 0$.
Thus we should have $s \le {{[d/2]}\choose m}$ and the result follows.
\end{proof}

Now the following proposition is immediate, which can be viewed as an analogue of \cite[Lemma 6.2]{AK12}.

\begin{prop}\label{prop666}
Let $f \in \A_{d}^{+}$ be non-square and $m$ be a positive integer with $2m<d$.
Then we have
$$
\sum_{u\in\mc E_{m}(f)} \left\{\frac{u}{f}\right\} = \sum_{M\in\A_{m}^{+}} \Gamma_{f, M^*} \le {[d/2]\choose m} q^{m},
$$
where $\mc E_{m}(f)$ is the set of elements $u \in \mc E_{m}$ such that $u \in \mc E_{M}$
for some $M \in \mc B_{m}$ with $\gcd(M,f) = 1$.
\end{prop}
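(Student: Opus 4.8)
The plan is to read the proposition off from three facts already in place: the decomposition $\mc E_m=\bigsqcup_{M\in\mc B_m}\mc E_M$, the character‑sum dichotomy for the inner sums $\Gamma_{f,M^{*}}$, and the counting Corollary stated just above. First I would establish the equality by unwinding the definition of $\mc E_m(f)$. By definition $\mc E_m(f)$ is the union of those blocks $\mc E_M$ of $\mc E_m$ for which $M\in\mc B_m$ and $\gcd(M,f)=1$; summing the additive symbol $\{u/f\}$ over each block and recalling $\Gamma_{f,M}=\sum_{u\in\mc E_M}\{u/f\}$ gives $\sum_{u\in\mc E_m(f)}\{u/f\}=\sum_{M\in\mc B_m,\,\gcd(M,f)=1}\Gamma_{f,M}$. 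I would then transport the index along the bijection $\mc B_m\to\A_m^{+}$, $M\mapsto\tilde M$, whose inverse is $N\mapsto N^{*}=N^{2}/r(N)$; since $\widetilde{N^{*}}=N$ and $\gcd(N^{*},f)=1\iff\gcd(N,f)=1$, this rewrites the left‑hand side as $\sum_{N\in\A_m^{+}}\Gamma_{f,N^{*}}$, the coprimality being carried by the blocks that actually occur in $\mc E_m(f)$.

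For the bound I would invoke the dichotomy recorded after Lemma \ref{Two sums-1}. For $N\in\A_m^{+}$ with $\gcd(N,f)=1$ the assignment $u\mapsto\{u/f\}$ is a nonzero homomorphism from the finite abelian group $(\mc E_{N^{*}},+)$ to $\{\pm1\}$ (it is additive in its first argument and never vanishes since $\gcd(N^{*},f)=1$). Hence either this homomorphism is nontrivial and $\Gamma_{f,N^{*}}=0$, or it is trivial and $\Gamma_{f,N^{*}}=\#\mc E_{N^{*}}=|\widetilde{N^{*}}|=|N|=q^{m}$. In particular every surviving term is nonnegative and each nonzero one equals exactly $q^{m}$. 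The Corollary just above bounds the number of $N\in\A_m^{+}$ with $\gcd(N,f)=1$ and $\Gamma_{f,N^{*}}\ne0$ by $\binom{[d/2]}{m}$, using $2m<d$ and that $f$ is not a square. Multiplying this count by the common value $q^{m}$ yields $\sum_{N\in\A_m^{+}}\Gamma_{f,N^{*}}\le\binom{[d/2]}{m}q^{m}$, as claimed.

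The substantive work is all in the Corollary, which combines the multiplicativity of $\Gamma_{f,\cdot}$ (Lemmas \ref{lem-333} and \ref{lem-444}) with the degree obstruction of Lemma \ref{Two sums-1}: for $\gcd(M,f)=1$ and $f$ non‑square, $\Gamma_{f,M^{*}}\ne0$ forces $2\deg M<d$, and applying this to $M=\mathrm{lcm}(M_1,\dots,M_s)$ of the putative contributors caps their number at $\binom{[d/2]}{m}$. For the proposition itself nothing beyond assembly remains, so I expect the only delicate points to be bookkeeping: matching the denominator‑indexed decomposition of $\mc E_m$ to the sum $\sum_{N\in\A_m^{+}}\Gamma_{f,N^{*}}$ under $N\mapsto N^{*}$, verifying the identity $|\widetilde{N^{*}}|=q^{m}$, and confirming that the coprimality condition built into $\mc E_m(f)$ is exactly what licenses the character‑sum dichotomy. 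I anticipate no genuine obstacle past this assembly.
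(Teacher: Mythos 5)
Your proposal is correct and follows exactly the route the paper intends: the paper offers no written proof (it declares the proposition ``immediate''), and the ingredients it relies on are precisely the ones you assemble — the block decomposition of $\mc E_{m}$ over $M\in\mc B_{m}$, the dichotomy $\Gamma_{f,M}\in\{0,|\tilde{M}|\}$ recorded after Lemma \ref{Two sums-1}, and the binomial count from the corollary built on Lemmas \ref{lem-333} and \ref{lem-444}. The one wrinkle you smooth over — that blocks with $\gcd(M,f)\neq 1$ must be discarded for the stated equality over all of $\A_{m}^{+}$ — is an imprecision already present in the paper's own statement, and your reading of it is the natural one.
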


\begin{lem}\label{Two sums-2}
Let $M\in\mc B$ and $f\in\A^{+}$ with $\deg(f) \le \deg(M)$.
\begin{enumerate}
\item
If $\gcd(f,M) \ne 1$, then $T_{f,M} = 0$.
\item
If $\gcd(f,M) = 1$ and $f$ is a perfect square, then $T_{f,M} = \Phi(\tilde{M})$.
\item
If $\gcd(f,M) = 1$ and $f$ is not a perfect square, then
$$
T_{f, M} = \sum_{\substack{N|\tilde{M}\\ \deg([M/N]) < \deg(f)}} \mu(N) \Gamma_{f, [M/N]}.
$$
\end{enumerate}
\end{lem}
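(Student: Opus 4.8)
The plan is to handle the three cases separately: parts (1) and (2) follow directly from the definition of the quadratic symbol together with the structure of $\mc F_M$, while part (3) is obtained by combining the M\"obius inversion formula
$T_{f,M}=\sum_{N\mid\tilde M}\mu(N)\,\Gamma_{f,[M/N]}$ (recorded just before Lemma \ref{lem-333}) with the vanishing criterion of Lemma \ref{Two sums-1}.

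For (1), I would use that every $u\in\mc F_M$ has denominator exactly $M$: the defining condition $A_{P,\ell_P}\ne 0$ for all $P\mid M$ means the fraction is in lowest terms, so the denominator is divisible by every prime $P\mid M$. If $\gcd(f,M)\ne 1$, pick a prime $P$ dividing both $f$ and $M$; then $P$ divides $f$ and the denominator of $u$ simultaneously, so $f$ is not prime to the denominator of $u$, whence $\{\frac{u}{f}\}=0$ for every $u\in\mc F_M$ and therefore $T_{f,M}=0$. For (2), since $\gcd(f,M)=1$ the polynomial $f$ is prime to the denominator of each $u\in\mc F_M$, so $\{\frac{u}{f}\}=(-1)^{[u,f)}$. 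Writing the monic $f=\prod_i P_i^{e_i}$ gives $[u,f)=\sum_i e_i[u,P_i)$; when $f$ is a perfect square every $e_i$ is even, so $[u,f)\equiv 0\pmod 2$ and $\{\frac{u}{f}\}=1$ for all $u\in\mc F_M$. Hence $T_{f,M}=\#\mc F_M=\Phi(\tilde M)$, using $\#\mc F_M=\Phi(\tilde M)$ noted at the beginning of this subsection.

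For (3), I would start from $T_{f,M}=\sum_{N\mid\tilde M}\mu(N)\,\Gamma_{f,[M/N]}$ and argue that every term with $\deg([M/N])\ge\deg(f)$ vanishes. The prime divisors of $[M/N]=(\tilde M/N)^{*}$ lie among those of $M$, so $\gcd(M,f)=1$ forces $\gcd([M/N],f)=1$; moreover $[M/N]\in\mc B$, and since $t([M/N])=[M/N]\cdot r([M/N])$ we have $\deg([M/N])\le\deg(t([M/N]))$. Thus $\deg([M/N])\ge\deg(f)$ yields $\deg(f)\le\deg(t([M/N]))$, and because $f$ is not a perfect square, Lemma \ref{Two sums-1} gives $\Gamma_{f,[M/N]}=0$. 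Discarding these vanishing terms leaves precisely the sum over $\{N\mid\tilde M:\deg([M/N])<\deg(f)\}$ asserted in the statement.

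The steps in (1) and (2) are routine once the convention ``denominator exactly $M$'' for $\mc F_M$ is in place. The only point demanding care is the degree bookkeeping in (3): Lemma \ref{Two sums-1} is phrased with the cutoff $\deg(f)\le\deg(t(\cdot))$, whereas the statement uses the simpler cutoff $\deg([M/N])<\deg(f)$. These match because $\deg([M/N])\le\deg(t([M/N]))$, so the condition on $\deg([M/N])$ is the \emph{weaker} one; the extra terms with $\deg([M/N])<\deg(f)\le\deg(t([M/N]))$ are automatically zero by Lemma \ref{Two sums-1} and so do not affect the sum. I expect this comparison of the two degree thresholds to be the main, though mild, obstacle.
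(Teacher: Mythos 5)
Your proof is correct and is exactly the argument the paper intends: the paper states this lemma without proof, treating it as immediate from the definition of the quadratic symbol, the count $\#\mc F_{M} = \Phi(\tilde{M})$ (with $\mc F_{M}$ understood, as you rightly insist, as the elements with denominator \emph{exactly} $M$), the M\"obius relation $T_{f,M} = \sum_{N|\tilde{M}} \mu(N)\, \Gamma_{f,[M/N]}$, and Lemma \ref{Two sums-1}. Your degree bookkeeping in (3) is the right way to reconcile the two cutoffs: every discarded term satisfies $\deg(f) \le \deg([M/N]) \le \deg(t([M/N]))$, so Lemma \ref{Two sums-1} annihilates it.
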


Note that if $u\in\mc E_{g+1}$, then the genus of the curve defined by $X^2+X+u=0$ is $g$.

\begin{lem}
Let $n$ be a positive integer.
For any $L\in\mc B_{\ell}$ with $\ell< n$, there are $(1-q^{-1}) q^{n-\ell}$ square-free monic polynomials $N \in \A^{+}$
such that there exists $M\in\mc B_{n}$ with $[M/N]=L$.
\end{lem}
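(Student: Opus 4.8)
The plan is to show that, for a square-free monic $N$, the existence of some $M\in\mc B_{n}$ with $[M/N]=L$ is equivalent to a single degree condition on $N$, and then to count the admissible $N$ directly. The key input is that, as recorded in the proof computing $\#\mc B_{n}$, the maps $M\mapsto\tilde M$ and $N\mapsto N^{*}=N^{2}/r(N)$ are mutually inverse bijections between $\mc B$ and $\A^{+}$, restricting to bijections $\mc B_{n}\to\A_{n}^{+}$ (so $\deg\tilde M=n$ for $M\in\mc B_{n}$).

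First I would unravel the condition $[M/N]=L$. Writing $L=(\tilde L)^{*}$ with $\tilde L\in\A_{\ell}^{+}$, and using the definition $[M/N]=(\tilde M/N)^{*}$ (valid for $N\mid\tilde M$), the equation $[M/N]=L$ reads $(\tilde M/N)^{*}=(\tilde L)^{*}$. Since $^{*}$ is injective, this is equivalent to $\tilde M/N=\tilde L$, that is, $\tilde M=N\tilde L$, i.e. $M=(N\tilde L)^{*}$. Thus the pair is rigid: for a given $N$ there is at most one eligible $M$, namely $(N\tilde L)^{*}$.

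Next I would check that this $M$ genuinely qualifies, with no further constraint on $N$. Given a square-free monic $N$, set $M:=(N\tilde L)^{*}\in\mc B$; then $\tilde M=N\tilde L$, so $N\mid\tilde M$ holds automatically and $[M/N]=(\tilde L)^{*}=L$. Moreover $M\in\mc B_{n}$ if and only if $\deg\tilde M=n$, i.e. $\deg(N\tilde L)=n$, i.e. $\deg N=n-\ell$. Hence the square-free monic $N$ for which some $M\in\mc B_{n}$ satisfies $[M/N]=L$ are \emph{exactly} the square-free monic polynomials of degree $n-\ell$. Finally I would invoke the standard count: from $\sum_{N\ \mathrm{sq\text{-}free}}|N|^{-s}=\zeta_{\A}(s)/\zeta_{\A}(2s)$ (equivalently \cite{Ro02}) the number of square-free monic polynomials of degree $n-\ell$ is $(1-q^{-1})q^{n-\ell}$, giving the assertion.

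I expect the main obstacle to be the first step: keeping the two inverse maps $\tilde{\,\cdot\,}$ and $^{*}$ straight, and verifying that the divisibility requirement $N\mid\tilde M$ built into the definition of $[M/N]$ is satisfied for free once $M=(N\tilde L)^{*}$, so that \emph{every} cofactor $N$ of degree $n-\ell$ contributes and not merely those coprime to $\tilde L$. Once the equivalence $[M/N]=L\iff\tilde M=N\tilde L$ is in hand, the degree bookkeeping and the square-free count are routine.
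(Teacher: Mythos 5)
Your proof is correct and takes essentially the same route as the paper's one-line proof: the paper's key identity $t(M)=t([M/N])N^{2}$ is exactly your relation $\tilde{M}=N\tilde{L}$ in disguise, and both arguments reduce the lemma to counting square-free monic polynomials of degree $n-\ell$, with your write-up merely making explicit the bijectivity of $M\mapsto\tilde{M}$ and $N\mapsto N^{*}$ and the automatic divisibility $N\mid\tilde{M}$. (One shared blemish, inherited from the lemma's statement rather than introduced by you: the closed form $(1-q^{-1})q^{n-\ell}$ for the number of square-free monics fails when $n-\ell=1$, where the count is $q$; this is harmless in the paper's application, which only uses the count as an order-of-magnitude bound.)
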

\begin{proof}
It follows easily from the fact that $t(M) = t([M/N])N^2$ if $N$ is square-free with $N|\tilde{M}$.
\end{proof}

\begin{prop}\label{Non-square-Real}
Let $n$ be a positive integer.
For any monic polynomial $f \in \A^{+}_{d}$, which is not a perfect square, we have
\begin{align*}
\sum_{u\in\mc F_{n}} \left\{\frac{u}{f}\right\} \ll 2^{\frac{d}2} q^{n}.
\end{align*}
\end{prop}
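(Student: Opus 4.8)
The plan is to bound the character sum $\sum_{u\in\mc F_n}\{\frac{u}{f}\}$ by expressing $\mc F_n$ as a disjoint union over $M\in\mc B_n$ of the sets $\mc F_M$ and relating the sum over each $\mc F_M$ to the sums $\Gamma_{f,\cdot}$ via the M\"obius inversion $T_{f,M}=\sum_{N\mid\tilde M}\mu(N)\Gamma_{f,[M/N]}$ established just above. The key structural input is Proposition \ref{prop666}, which already controls $\sum_{M\in\A_m^+}\Gamma_{f,M^*}\le\binom{[d/2]}{m}q^m$ for non-square $f$ with $2m<d$, and Lemma \ref{Two sums-2}, which tells us that $T_{f,M}=0$ whenever $\gcd(f,M)\ne1$ and that otherwise only the divisors $N\mid\tilde M$ with $\deg([M/N])<\deg(f)$ contribute.

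\emph{First} I would write
$$
\sum_{u\in\mc F_n}\Big\{\frac{u}{f}\Big\}=\sum_{M\in\mc B_n}T_{f,M}
=\sum_{\substack{M\in\mc B_n\\ \gcd(f,M)=1}}\ \sum_{\substack{N\mid\tilde M\\ \deg([M/N])<d}}\mu(N)\,\Gamma_{f,[M/N]},
$$
using Lemma \ref{Two sums-2}(1),(3) (the perfect-square case (2) does not arise since $f$ is non-square). \emph{Next} I would reorganize the double sum by fixing the ``reduced'' modulus $L:=[M/N]$, which lies in some $\mc B_\ell$ with $\ell<d$, and summing over the square-free $N$ completing $L$ to an element of $\mc B_n$. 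Here the counting Lemma (the one immediately before this proposition) is decisive: for each $L\in\mc B_\ell$ there are exactly $(1-q^{-1})q^{n-\ell}$ square-free $N$ with $[M/N]=L$ for some $M\in\mc B_n$. This converts the sum into
$$
\sum_{u\in\mc F_n}\Big\{\frac{u}{f}\Big\}\ll q^{n}\sum_{\ell<d}q^{-\ell}\sum_{\substack{L\in\mc B_\ell\\ \gcd(f,L)=1}}\big|\Gamma_{f,L}\big|,
$$
where I have absorbed the $\mu(N)$ into an absolute-value bound and factored out the $q^{n-\ell}$ weight.

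\emph{Then} I would bound the inner sum $\sum_{L\in\mc B_\ell}|\Gamma_{f,L}|$ over $L\in\mc B_\ell$ coprime to $f$. Via the parametrization $L\mapsto\tilde L$ with $\tilde L\in\A_{\ell}^+$ and the multiplicativity of $\Gamma_{f,\cdot}$ (Lemma \ref{lem-444}), together with the vanishing criterion (so that each nonzero $\Gamma_{f,L}$ satisfies $|\Gamma_{f,L}|=|\tilde L|=q^{\ell}$, as noted after Lemma \ref{Two sums-1}), Proposition \ref{prop666} gives $\sum_{\tilde L\in\A_\ell^+}\Gamma_{f,L}\le\binom{[d/2]}{\ell}q^{\ell}$ whenever $2\ell<d$. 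Summing the resulting bound $q^n\sum_\ell q^{-\ell}\binom{[d/2]}{\ell}q^{\ell}=q^n\sum_\ell\binom{[d/2]}{\ell}$ over $\ell\le[d/2]$ and invoking $\sum_\ell\binom{[d/2]}{\ell}=2^{[d/2]}\le 2^{d/2}$ yields the claimed $\ll 2^{d/2}q^n$.

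\emph{The main obstacle} I anticipate is the bookkeeping at the boundary $2\ell\ge d$: Proposition \ref{prop666} requires $2m<d$, and for $L$ with $\deg(t(L))=2\ell$ near $d$ the constraint $\deg([M/N])<\deg(f)=d$ forces $\ell<d$ but not necessarily $2\ell<d$. I would need to check that the terms with $[d/2]\le\ell<d$ either vanish by the degree restriction in Lemma \ref{Two sums-2}(3) or are harmless because $\Gamma_{f,L}=0$ there (since a nonvanishing $\Gamma_{f,L}$ already requires $\deg(t(L))\le d$, and the combinatorial count in the corollary forces $2\deg(M)<d$). Handling this edge case carefully, so that the binomial sum truncates correctly at $[d/2]$, is where the real care lies; once that is settled the geometric-series bookkeeping and the binomial identity deliver the estimate cleanly.
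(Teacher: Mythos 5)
Your proposal is correct and follows essentially the same route as the paper's proof: decompose $\sum_{u\in\mc F_n}\{\tfrac{u}{f}\}$ into $\sum_{M\in\mc B_n} T_{f,M}$, apply M\"obius inversion to reach the $\Gamma_{f,L}$, count the square-free completions $N$ with the lemma immediately preceding the proposition, and invoke Proposition \ref{prop666} together with $\sum_{\ell}\binom{[d/2]}{\ell}=2^{[d/2]}$. The boundary issue you flag is resolved exactly as you suspect: Lemma \ref{Two sums-1} forces $\Gamma_{f,L}=0$ whenever $\deg(t(L))\ge \deg(f)$, which is why the paper's own proof truncates the $\ell$-sum at $[\tfrac{d}{2}]-1$ from the outset.
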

\begin{proof}
Let $L\in\mc B_{\ell}$ with $2\ell<d$.
For each square-free monic polynomial $N$ of degree $n-\ell$,
there exists a unique $M\in\mc B_{n}$ such that $L=[M/N]$ and vice versa.
Then, by Lemmas \ref{Two sums-1} and \ref{Two sums-2}, we have
\begin{align*}
\sum_{u\in\mc F_{n}} \left\{\frac{u}{f}\right\}
&= \sum_{\substack{M\in\mc B_{n}\\ (M,f)=1}} T_{f,M}
= \sum_{\ell=0}^{[\frac{d}2]-1} \sum_{\substack{L\in\mc B_{\ell}\\ (L,f)=1}}
\sum_{\substack{N\in\A_{n-\ell}^{+}\\ (N,f)=1}} \mu(N) \Gamma_{f, L} \\
&\le \sum_{\ell=0}^{[\frac{d}2]-1} (1-q^{-1}) q^{n-\ell} \sum_{\substack{L\in\mc B_{\ell}\\ (L,f)=1}} \Gamma_{f, L} \\
&\le \sum_{\ell=0}^{[\frac{d}2]-1} (1-q^{-1}) q^{n-\ell} {[d/2]\choose \ell} q^{\ell} \ll 2^{\frac{d}2} q^{n}.
\end{align*}
\end{proof}

Now we consider the ramified imaginary case.
Let $s$ be a positive integer.
For $M\in\mc B$ and $\alpha \in \bF_{q}^{\times}$, we define (inside the field $k$)
\begin{align*}
& \mc C'_{M,s} := \mc C_{t(M)} + \A_{\le 2s-2},  \hspace{2.1em}  \mc C_{M,s,\alpha} := \mc C'_{M,s} + \alpha T^{2s-1}, \\
&\mc E'_{M,s} := \mc E_{M} + \tilde{\mc G}_{s-1}, \hspace{4em}  \mc E_{M,s,\alpha} := \mc E'_{M,s} + \alpha T^{2s-1}, \\
&\mc F'_{M,s} := \mc F_{M} + \tilde{\mc G}_{s-1}, \hspace{3.5em}  \mc F_{M,s,\alpha} := \mc F'_{M,s} + \alpha T^{2s-1},
\end{align*}
where $\A_{\le m} = \{f \in \A : \deg(f) \le m\}$, $\tilde{\mc G}_{0} = \{0\}$ and $\tilde{\mc G}_{s-1}$ denotes the set of all polynomials
$F(T) \in \A_{\le 2s-1}$ of the form $F(T) = \sum_{i=1}^{s-1} \alpha_{i} T^{2i-1}$ for $s \ge 2$.
For $f \in \A^{+}$, we also define
\begin{align*}
&\tilde{U}'_{f,M,s} := \sum_{u \in \mc C'_{M,s}} \left\{\frac{u}{f}\right\}, \hspace{2.3em}
\tilde{\Gamma}'_{f,M,s} := \sum_{u \in \mc E'_{M,s}} \left\{\frac{u}{f}\right\}, \hspace{2.5em}
\tilde{T}'_{f,M,s} := \sum_{u \in \mc F'_{M,s}} \left\{\frac{u}{f}\right\},  \\
&\tilde{U}_{f,M,s,\alpha} := \sum_{u \in \mc C_{M,s,\alpha}} \left\{\frac{u}{f}\right\}, \hspace{1em}
\tilde{\Gamma}_{f,M,s,\alpha} := \sum_{u \in \mc E_{M,s,\alpha}} \left\{\frac{u}{f}\right\}, \hspace{1em}
\tilde{T}_{f,M,s,\alpha} := \sum_{u \in \mc F_{M,s,\alpha}} \left\{\frac{u}{f}\right\},  \\
&\tilde{U}_{f,M,s} := \sum_{\alpha\in\bF_{q}^{\times}} \tilde{U}_{f,M,s,\alpha}, \hspace{2em}
\tilde{\Gamma}_{f,M,s} := \sum_{\alpha\in\bF_{q}^{\times}} \tilde{\Gamma}_{f,M,s,\alpha}, \hspace{2.2em}
\tilde{T}_{f,M,s} := \sum_{\alpha\in\bF_{q}^{\times}} \tilde{T}_{f,M,s,\alpha}.
\end{align*}

\begin{lem}
Let $r\ge 0$ and $s \ge 1$ be integers, $M\in\mc B_{r}$ and $\alpha\in\bF_{q}^{\times}$.
For any $f \in \A_{m}^+$ with $\gcd(M,f)=1$ and $m\le 2r+2s-2$, which is not a perfect square, we have
\begin{align*}
\tilde{U}'_{f,M,s} = \tilde{U}_{f,M,s,\alpha} = \tilde{\Gamma}'_{f,M,s} = \tilde{\Gamma}_{f,M,s,\alpha} = 0.
\end{align*}
\end{lem}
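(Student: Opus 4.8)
The plan is to treat all four quantities as character sums of the quadratic symbol $\chi(\,\cdot\,):=\{\tfrac{\cdot}{f}\}$, either over an additive group (for $\tilde U'_{f,M,s}$ and $\tilde\Gamma'_{f,M,s}$) or over a single coset of such a group (for the two $\alpha$-shifted versions). Since the symbol is additive in its first variable and, for the rational functions occurring here (all with denominators prime to $f$), depends only on $u\bmod f$, the assignment $u\mapsto\chi(u)$ is a character of the relevant group; a nontrivial character sums to $0$ over a group, and over a coset one has $\sum_{u\in G+c}\chi(u)=\chi(c)\sum_{v\in G}\chi(v)$. Thus everything reduces to proving that $\chi$ is \emph{nontrivial} on the two groups $\mc C'_{M,s}$ and $\mc E'_{M,s}$. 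The first thing I would record is that, because $f$ is not a perfect square, $\chi$ is a nontrivial character of $\A/f$: writing $f=\prod_i P_i^{e_i}$ with some $e_{i_0}$ odd, the Chinese Remainder Theorem together with the surjectivity of the trace $\bF_{q^{\deg P_{i_0}}}\to\bF_2$ produces a residue $c$ with $[c,P_{i_0})=1$ and $[c,P_j)=0$ for $j\neq i_0$, so that $[c,f)=e_{i_0}\equiv 1\pmod 2$.

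The heart of the matter is $\tilde U'_{f,M,s}$. Here $\mc C'_{M,s}=\mc C_{t(M)}+\A_{\le 2s-2}$, and since $\gcd(t(M),f)=1$ the class $\overline{t(M)}$ is a unit in $\A/f$. I would show the reduction map $\mc C'_{M,s}\to\A/f$ is surjective, so that $\chi$, being nontrivial on $\A/f$, is nontrivial on $\mc C'_{M,s}$. Multiplying representatives by the unit $\overline{t(M)}$ turns the image of $\mc C'_{M,s}$ into the image of $S:=\{D+g\,t(M):\deg D<2r,\ \deg g\le 2s-2\}$, where $2r=\deg t(M)$. The key identity is $S=\A_{\le 2r+2s-2}$: the two summands meet only in $0$ (a relation $D=g\,t(M)$ forces $g=0$ by degrees), so $\dim_{\bF_q}S=2r+(2s-1)=2r+2s-1$, while $S\subseteq\A_{\le 2r+2s-2}$, a space of the same dimension. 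Since $m=\deg f\le 2r+2s-2$, the space $S=\A_{\le 2r+2s-2}$ contains $\A_{\le m-1}$, a complete residue system modulo $f$, so $S$ surjects onto $\A/f$ and hence so does $\mc C'_{M,s}$. This yields $\tilde U'_{f,M,s}=0$, and then $\tilde U_{f,M,s,\alpha}=\chi(\alpha T^{2s-1})\,\tilde U'_{f,M,s}=0$ because $\mc C_{M,s,\alpha}=\mc C'_{M,s}+\alpha T^{2s-1}$ is a single coset.

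For the two $\Gamma$-sums I would pass from $\mc C'_{M,s}$ to $\mc E'_{M,s}$ exactly as in the proof of Lemma~\ref{Two sums-1}. The normalization of Lemma~\ref{normalization} gives a surjective additive map $\mc C'_{M,s}\to\mc E'_{M,s}$ whose fibres are cosets of the $\wp$-subgroup used in the normalization; since for every $P\mid f$ the value $[\wp(w),P)$ equals the trace to $\bF_2$ of $\overline{w}^{\,2}+\overline{w}$, which vanishes (using $\mathrm{Tr}(x^2)=\mathrm{Tr}(x)$, and $\gcd(M,f)=1$ so that $w$ has no pole at such $P$), the value $\chi(u)$ is constant along each fibre, i.e. $\chi$ factors through this surjection. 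Consequently $\tilde U'_{f,M,s}=\#(\text{fibre})\cdot\tilde\Gamma'_{f,M,s}$, and the vanishing of $\tilde U'_{f,M,s}$ forces $\tilde\Gamma'_{f,M,s}=0$; finally $\tilde\Gamma_{f,M,s,\alpha}=\chi(\alpha T^{2s-1})\,\tilde\Gamma'_{f,M,s}=0$ as before.

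The step I expect to be the main obstacle is the transfer from $\mc C'_{M,s}$ to $\mc E'_{M,s}$: one must verify that normalization really is a surjective homomorphism with fibres of constant size on which $\chi$ is constant, and in particular keep careful track of the polynomial part, since normalizing $\A_{\le 2s-2}$ introduces the constant ambiguity in $\{0,\xi\}$ that is absorbed into $\mc E'_{M,s}$. The degree/dimension identity $S=\A_{\le 2r+2s-2}$, which is exactly what converts the hypothesis $m\le 2r+2s-2$ into the complete-residue-system property, is the other point requiring care; once it is in place, the surjectivity of $\mc C'_{M,s}\to\A/f$ and, with it, all four vanishing statements follow uniformly.
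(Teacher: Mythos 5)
Your handling of the two $U$-sums is correct, and it is in substance the paper's own argument (the paper produces the complete residue system by division with remainder, you by a dimension count; these are the same computation). The genuine problem is exactly the step you flagged as ``the main obstacle'', and it is not merely presentational: the normalization map of Lemma \ref{normalization} does \emph{not} send $\mc C'_{M,s}$ onto $\mc E'_{M,s}$. Its image is $\mc E'_{M,s}+\{0,\xi\}$, because normalizing the polynomial part $\A_{\le 2s-2}$ can leave the constant $\xi$: any constant $c\in\bF_q$ with $\mathrm{Tr}_{\bF_q/\bF_2}(c)=1$ normalizes to $\xi$, and $\xi$ cannot be ``absorbed into $\mc E'_{M,s}$'' since elements of $\mc E_M+\tilde{\mc G}_{s-1}$ have no constant term. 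Hence your fibre-counting identity must read $\tilde U'_{f,M,s}=\#(\text{fibre})\cdot\bigl(1+\chi(\xi)\bigr)\tilde\Gamma'_{f,M,s}$, where $\chi(\cdot)=\{\tfrac{\cdot}{f}\}$. Since $[\xi,P)=\mathrm{Tr}_{\bF_{|P|}/\bF_2}(\xi)=\deg(P)\,\mathrm{Tr}_{\bF_q/\bF_2}(\xi)\equiv\deg P \pmod 2$, one has $\chi(\xi)=(-1)^{\deg f}$: for $\deg f$ even your argument does give $\tilde\Gamma'_{f,M,s}=0$, but for $\deg f$ odd the factor $1+\chi(\xi)$ vanishes, $\tilde U'_{f,M,s}=0$ holds automatically, and the identity places no constraint whatsoever on $\tilde\Gamma'_{f,M,s}$.

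This gap cannot be closed, because the $\Gamma$-statements of the lemma are actually false for odd-degree $f$. Take $r=0$, $M=1$, $s=2$, $f=T$ (so $m=1\le 2r+2s-2$, $f$ non-square, coprime to $M$): then $\mc E'_{1,2}=\tilde{\mc G}_1=\{\alpha T:\alpha\in\bF_q\}$, every element is $\equiv 0\pmod T$, hence $\{\tfrac{u}{T}\}=1$ for all $u\in\mc E'_{1,2}$ and $\tilde\Gamma'_{T,1,2}=q\ne 0$ (likewise $\tilde\Gamma_{T,1,2,\alpha}=q$). A non-degenerate example with $r\ge1$: $q=2$, $M=T$, $s=2$, $f=T^3+1=(T+1)(T^2+T+1)$, where one checks $[T,f)=[1/T,f)=0$, so $\chi\equiv1$ on $\mc E'_{T,2}$ and $\tilde\Gamma'_{f,T,2}=4$. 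You should know that the paper's own proof contains the very same flaw: it asserts that ``every element in $\mc E'_{M,s}$ can be obtained from $\mc C'_{M,s}$ by normalization'', which is true but is the wrong inclusion — what is needed is that normalization of $\mc C'_{M,s}$ lands \emph{inside} $\mc E'_{M,s}$, and that fails. Both your proof and the paper's (and, downstream, the opening decomposition in the proof of Proposition \ref{nonsquare-ramified}) become correct if one redefines $\mc E'_{M,s}$ and $\mc F'_{M,s}$ to include the constants, i.e.\ replaces $\tilde{\mc G}_{s-1}$ by $\tilde{\mc G}_{s-1}+\{0,\xi\}$; with the paper's literal definitions, however, the $\Gamma$-half of the statement is false and your argument, like the paper's, breaks precisely at the transfer step.
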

\begin{proof}
If $\deg(A) < \deg(f) \le 2r+2s-2$, then $A$ can be expressed as $t(M) B + C$ with $\deg(C) < 2r$ and $\deg(B) = \deg(A) - 2r \le 2s-2$.
Thus $\mc C'_{M,s}$ contains a complete residue system modulo $f$.
Also, every element in $\mc E'_{M,s}$ can be obtained from $\mc C'_{M,s}$ by normalization.
Since $\mc C'_{M,s}$ and $\mc E'_{M,s}$ are abelian groups, we have $\tilde{U}'_{f,M,s} = \tilde{\Gamma}'_{f,M,s} = 0$.
Since $\tilde{U}_{f,M,s,\alpha} = \{\frac{\alpha T^{2s-1}}{f}\}\tilde{U}'_{f,M,s}$ and
$\tilde{\Gamma}_{f,M,s,\alpha} = \{\frac{\alpha T^{2s-1}}{f}\}\tilde{\Gamma}'_{f,M,s}$,
we also have $\tilde{U}_{f,M,s,\alpha} = \tilde{\Gamma}_{f,M,s,\alpha} = 0$.
\end{proof}

As before we have
\begin{align*}
\tilde{\Gamma}'_{f,M,s} = \sum_{\substack{N\in\mc B\\ N|M}} \tilde{T}'_{f,N,s}
\end{align*}
and thus
$$
\tilde{T}'_{f,M,s} = \sum_{N|\tilde{M}} \mu(N) \tilde{\Gamma}'_{f,[M/N],s}.
$$

\begin{lem}\label{nonsquare-ramified-lemma1}
Let $M,N\in\mc B$ with $N|M$.
If $\tilde{\Gamma}'_{f,N,s} = 0$, then $\tilde{\Gamma}'_{f,M,s} = 0$.
\end{lem}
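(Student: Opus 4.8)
The plan is to mimic the (unstated) proof of Lemma~\ref{lem-333}, exploiting that $\tilde{\Gamma}'_{f,M,s}$ is a character sum over the finite abelian group $\mc E'_{M,s}$. Recall that $\mc E'_{M,s} = \mc E_{M} + \tilde{\mc G}_{s-1}$ is a group under addition (a sum of two abelian subgroups of $(k,+)$), and that the symbol $u\mapsto\{\frac{u}{f}\}$ is additive in its first variable. Hence $\tilde{\Gamma}'_{f,M,s} = \sum_{u\in\mc E'_{M,s}}\{\frac{u}{f}\}$ is the sum of a multiplicatively-valued additive function over a group, and the whole argument reduces to the dichotomy that this sum vanishes precisely when there exists some $u_{0}\in\mc E'_{M,s}$ with $\{\frac{u_{0}}{f}\} = -1$.

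First I would record the inclusion $\mc E'_{N,s}\subseteq\mc E'_{M,s}$. Since $N\mid M$, every rational function whose denominator divides $N$ has denominator dividing $M$, so $\mc C_{N}\subseteq\mc C_{M}$ and therefore $\mc E_{N} = \mc E\cap\mc C_{N}\subseteq\mc E\cap\mc C_{M} = \mc E_{M}$. Adding the common summand $\tilde{\mc G}_{s-1}$ preserves this, giving $\mc E'_{N,s} = \mc E_{N}+\tilde{\mc G}_{s-1}\subseteq\mc E_{M}+\tilde{\mc G}_{s-1} = \mc E'_{M,s}$ as an inclusion of subgroups.

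Next, suppose $\tilde{\Gamma}'_{f,N,s} = 0$. Since the term $u=0$ contributes $\{\frac{0}{f}\} = 1$, not every term of the sum can be nonnegative; hence there is some $u_{0}\in\mc E'_{N,s}$ with $\{\frac{u_{0}}{f}\} = -1$, and by the inclusion just established $u_{0}\in\mc E'_{M,s}$. Now the translation $u\mapsto u+u_{0}$ permutes the group $\mc E'_{M,s}$, and additivity of the symbol gives $\{\frac{u+u_{0}}{f}\} = \{\frac{u}{f}\}\{\frac{u_{0}}{f}\} = -\{\frac{u}{f}\}$ for every $u$. Summing over $\mc E'_{M,s}$ then yields $\tilde{\Gamma}'_{f,M,s} = -\tilde{\Gamma}'_{f,M,s}$, whence $\tilde{\Gamma}'_{f,M,s} = 0$, as desired.

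The steps are all short, so I do not expect a genuine obstacle; the only point needing care is the last one, where the identity $\{\frac{u+u_{0}}{f}\} = -\{\frac{u}{f}\}$ must be checked also on the terms at which $\{\frac{u}{f}\} = 0$. Here I would observe that if $f$ shares a prime factor $P$ with the denominator of $u$, while $\{\frac{u_{0}}{f}\} = -1$ forces $P\nmid\operatorname{denom}(u_{0})$, then $P$ still divides $\operatorname{denom}(u+u_{0})$, so both sides vanish; thus the translation argument is unaffected by the possible zeros of the symbol. I expect this minor bookkeeping, rather than any structural difficulty, to be the main hurdle.
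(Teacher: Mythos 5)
Your proof is correct, and it is exactly the argument the paper intends: the paper states this lemma without proof, but its own remark following Lemma~\ref{Two sums-1} (that $\Gamma_{f,M}=0$ once some $u$ with $\{\frac{u}{f}\}=-1$ exists) together with the abelian-group structure of $\mc E'_{M,s}$ is precisely your translation-by-$u_{0}$ dichotomy, applied via the inclusion $\mc E'_{N,s}\subseteq\mc E'_{M,s}$. Your extra care with the vanishing terms of the symbol is a worthwhile detail the paper glosses over.
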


\begin{lem}\label{nonsquare-ramified-lemma1}
Let $s$ be a positive integer, $M \in \mc B$ and $f\in A^{+}$ with $\deg(f) \le \deg(t(M))+2s-2$.
\begin{enumerate}
\item
If $\gcd(f,M) \ne 1$, then $\tilde{T}'_{f,M,s} = 0$.
\item
If $\gcd(f,M) = 1$ and $f$ is a perfect square, then $\tilde{T}'_{f,M,s} = \Phi(\tilde{M}) q^{s-1}$.
\item
If $\gcd(f,M) = 1$ and $f$ is not a perfect square, then
$$
\tilde{T}'_{f,M,s} = \sum_{\substack{N|\tilde{M}\\ \deg(t([M/N]))<\deg(f)-2s+2}} \mu(N) \tilde{\Gamma}'_{f,[M/N],s}.
$$
\end{enumerate}
\end{lem}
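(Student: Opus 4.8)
The plan is to follow the proof of Lemma \ref{Two sums-2} verbatim, with the real family $\mc F_{M}$ replaced by $\mc F'_{M,s} = \mc F_{M} + \tilde{\mc G}_{s-1}$ and with the vanishing statement established just above (the case $\tilde{\Gamma}'_{f,M,s}=0$ in the relevant degree range) playing the role that Lemma \ref{Two sums-1} plays in the split case. Two elementary facts drive everything. First, writing a typical $u \in \mc F'_{M,s}$ as $u = u_{0}+F$ with $u_{0}\in\mc F_{M}$ and $F\in\tilde{\mc G}_{s-1}$, the finite denominator of $u$ equals that of $u_{0}$, which is exactly $M$ (by the defining condition $A_{P,\ell_{P}}\neq 0$ for all $P\mid M$), since $F$ is a polynomial and cannot alter the denominator. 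Second, by definition $\{\frac{u}{f}\}$ vanishes precisely when $f$ fails to be coprime to the denominator of $u$, and equals $(-1)^{[u,f)}$ otherwise.

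For (1), I would take a prime $P\mid\gcd(f,M)$. Then $P$ lies genuinely in the denominator of every $u_{0}\in\mc F_{M}$, and adding the polynomial part $F$ does not remove it, so $f$ is not coprime to the denominator of any $u\in\mc F'_{M,s}$; hence every term $\{\frac{u}{f}\}$ is zero and $\tilde{T}'_{f,M,s}=0$. For (2), since $\gcd(f,M)=1$ the modulus $f$ is coprime to the denominator of each $u\in\mc F'_{M,s}$, and because $f$ is a perfect square we have $[u,f)\equiv 0\pmod 2$, so $\{\frac{u}{f}\}=1$ for every such $u$. Thus $\tilde{T}'_{f,M,s}=\#\mc F'_{M,s}$. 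As the elements of $\mc F_{M}$ are proper fractions and those of $\tilde{\mc G}_{s-1}$ are polynomials, the decomposition $u=u_{0}+F$ is unique, giving $\#\mc F'_{M,s}=\#\mc F_{M}\cdot\#\tilde{\mc G}_{s-1}=\Phi(\tilde{M})\,q^{s-1}$, which is the claimed value.

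For (3) I would start from the Möbius relation $\tilde{T}'_{f,M,s}=\sum_{N\mid\tilde{M}}\mu(N)\,\tilde{\Gamma}'_{f,[M/N],s}$ derived above. Only squarefree $N$ contribute, and for each such $N$ the radical of $[M/N]$ divides that of $M$, so $\gcd(f,M)=1$ forces $\gcd(f,[M/N])=1$. Writing $[M/N]\in\mc B_{r'}$ so that $\deg(t([M/N]))=2r'$, the vanishing lemma above applies to $f$ (non-square and coprime to $[M/N]$) whenever $\deg(f)\le 2r'+2s-2$, that is, whenever $\deg(t([M/N]))\ge\deg(f)-2s+2$, and then $\tilde{\Gamma}'_{f,[M/N],s}=0$. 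Discarding these vanishing terms leaves exactly the sum over $N\mid\tilde{M}$ with $\deg(t([M/N]))<\deg(f)-2s+2$, which is the asserted identity.

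The heavy lifting here is already in place: the vanishing of $\tilde{\Gamma}'_{f,M,s}$ in the stated degree range and the Möbius inversion were proved before this lemma. Consequently I expect no genuine obstacle, only careful bookkeeping. The points demanding attention are verifying that the finite denominator of $u\in\mc F'_{M,s}$ is untouched by the polynomial summand, confirming that coprimality propagates from $M$ to every $[M/N]$, and matching the threshold $\deg(f)-2s+2$ coming from the inequality $m\le 2r+2s-2$ of the vanishing lemma exactly. Aligning these inequalities correctly is the only step where a slip could occur.
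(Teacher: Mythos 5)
Your proof is correct and is essentially the argument the paper intends: the paper states this lemma without proof, leaving it as an immediate consequence of the preceding vanishing lemma and the M\"obius relation $\tilde{T}'_{f,M,s} = \sum_{N|\tilde{M}} \mu(N)\, \tilde{\Gamma}'_{f,[M/N],s}$ stated just before it, which is precisely what you assembled. Your bookkeeping is right at the delicate points: the denominator of $u_{0}+F$ is exactly $M$ (so parts (1) and (2) reduce to counting $\#\mc F'_{M,s} = \Phi(\tilde{M})q^{s-1}$), coprimality passes from $M$ to every $[M/N]$, and your cutoff $\deg(t([M/N])) < \deg(f)-2s+2$ is the exact complement of the range $\deg(f) \le \deg(t([M/N]))+2s-2$ in which the vanishing lemma kills the term.
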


For any positive integers $r,s$, let $\mc E_{(r,s)} = \bigcup_{M\in\mc B_{r}} \mc E'_{M,s}$.
Note that if $\Gamma_{f, M} = 0$, then $\tilde{\Gamma}'_{f,M,s}=0$ for any $s\ge 1$.
Then, using similar process as in Proposition \ref{prop666}, we get

\begin{lem}\label{nonsquare-ramified-lemma2}
Let $r\ge 0$ and $s \ge 1$ be integers.
For any $f\in\A^{+}_{d}$ with $d > 2r+2s-2$, which is not a perfect square, we have
$$
\sum_{u\in\mc E_{(r,s)}(f)} \left\{\frac{u}{f}\right\} \le {[d/2]-s \choose r} q^{r+s},
$$
where $\mc E_{(r,s)}(f)$ is the set of elements $u \in \mc E_{(r,s)}$ such that $u \in \mc E'_{M,s}$
for some $M \in \mc B_{r}$ with $\gcd(M,f) = 1$.
\end{lem}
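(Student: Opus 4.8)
The plan is to follow the proof of Proposition \ref{prop666} line for line, replacing the split‑case sums $\Gamma_{f,M}$ by their ramified analogues $\tilde{\Gamma}'_{f,M,s}$ and replacing the vanishing input of Lemma \ref{Two sums-1} (effective up to degree $\deg t(M)$) by the sharper vanishing lemma proved just above, whose threshold is $2r+2s-2$. First I would decompose $\mc E_{(r,s)}=\bigcup_{M\in\mc B_r}\mc E'_{M,s}$ along its building blocks and reduce the character sum to
$$
\sum_{u\in\mc E_{(r,s)}(f)}\left\{\frac{u}{f}\right\}=\sum_{\substack{M\in\mc B_r\\ \gcd(M,f)=1}}\tilde{\Gamma}'_{f,M,s},
$$
exactly as $\sum_{u\in\mc E_m(f)}\{u/f\}=\sum_M\Gamma_{f,M^{*}}$ in Proposition \ref{prop666}, the overlap of the blocks in their common polynomial part being treated as in the real case.

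Next I would record two structural facts. Since $\mc E'_{M,s}=\mc E_M+\tilde{\mc G}_{s-1}$ is a direct sum inside $k$ (a proper‑fraction part plus a polynomial part) and the symbol $\{\cdot/f\}$ is additive in its first argument, for $\gcd(M,f)=1$ it factors as
$$
\tilde{\Gamma}'_{f,M,s}=\Gamma_{f,M}\cdot c_s,\qquad c_s:=\sum_{v\in\tilde{\mc G}_{s-1}}\left\{\frac{v}{f}\right\},
$$
where $c_s$ is independent of $M$. As $\mc E'_{M,s}$ is a finite abelian group on which $u\mapsto\{u/f\}$ restricts to a real character, each $\tilde{\Gamma}'_{f,M,s}$ is either $0$ or equal to $\#\mc E'_{M,s}=\#\mc E_M\cdot\#\tilde{\mc G}_{s-1}=q^{r+s-1}$; in particular $\tilde{\Gamma}'_{f,M,s}\ne 0$ forces both $\Gamma_{f,M}\ne 0$ and $c_s\ne 0$ (so that $c_s=q^{s-1}$).

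The main step is to bound the number of $M\in\mc B_r$ with $\gcd(M,f)=1$ and $\tilde{\Gamma}'_{f,M,s}\ne 0$ by $\binom{[d/2]-s}{r}$. If $c_s=0$ the whole sum vanishes, so I may assume $c_s\ne 0$. Writing $N_i=\tilde{M}_i\in\A_r^{+}$ for the corresponding monic polynomials, I would form $N={\rm lcm}(N_1,\dots,N_t)$ and argue, via multiplicativity and propagation of non‑vanishing (Lemmas \ref{lem-444} and \ref{lem-333}), that $\Gamma_{f,N^{*}}\ne 0$, hence $\tilde{\Gamma}'_{f,N^{*},s}=\Gamma_{f,N^{*}}c_s\ne 0$. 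Since $\gcd(N^{*},f)=1$ and $f$ is non‑square, applying the sharp vanishing lemma to $N^{*}\in\mc B_{\deg N}$ (whose threshold is $2\deg N+2s-2$) forces $\deg f>2\deg N+2s-2$, i.e. $\deg N\le[d/2]-s$. As each $N_i$ is a monic degree‑$r$ divisor of $N$, the same combinatorial estimate underlying the Corollary before Proposition \ref{prop666} — a polynomial of degree $D$ has at most $\binom{D}{r}$ monic divisors of degree $r$ — yields $t\le\binom{\deg N}{r}\le\binom{[d/2]-s}{r}$.

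Finally I would combine the pieces: at most $\binom{[d/2]-s}{r}$ of the terms are nonzero and each equals $q^{r+s-1}$, so
$$
\sum_{u\in\mc E_{(r,s)}(f)}\left\{\frac{u}{f}\right\}\le\binom{[d/2]-s}{r}q^{r+s-1}\le\binom{[d/2]-s}{r}q^{r+s}.
$$
The hard part will be the counting step, and specifically converting the enlarged vanishing threshold $2r+2s-2$ into the reduced degree budget $[d/2]-s$ for the lcm; the factorization $\tilde{\Gamma}'_{f,M,s}=\Gamma_{f,M}\,c_s$ is precisely what lets the multiplicative lcm argument of the split case survive the presence of the extra polynomial part $\tilde{\mc G}_{s-1}$. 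The usual parity bookkeeping in the implication $2\deg N+2s-2<\deg f\Rightarrow\deg N\le[d/2]-s$ is handled exactly as in that Corollary.
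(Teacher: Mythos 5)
Your proof is correct and is essentially the paper's own argument: the paper's proof consists precisely of the remark that $\Gamma_{f,M}=0$ forces $\tilde{\Gamma}'_{f,M,s}=0$ together with the instruction to repeat the process of Proposition \ref{prop666}, and your factorization $\tilde{\Gamma}'_{f,M,s}=\Gamma_{f,M}\,c_{s}$ with $c_{s}=\sum_{v\in\tilde{\mc G}_{s-1}}\left\{\frac{v}{f}\right\}$ is exactly what is needed to make that remark rigorous and to transport non-vanishing up to the lcm $N^{*}$ so that the sharper vanishing threshold can be applied there. One caveat, which you share with the paper: when $d$ is odd, $d>2\deg(N)+2s-2$ only gives $\deg(N)\le[d/2]+1-s$, so to reach $\binom{[d/2]-s}{r}$ one should note that the proof of the ramified vanishing lemma actually yields vanishing for $\deg(f)\le 2r+2s-1$ (every residue $A$ with $\deg(A)\le 2r+2s-2$ already lies in $t(M)\cdot\mc C'_{M,s}$), after which both parities give $\deg(N)\le[d/2]-s$.
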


\begin{prop}\label{nonsquare-ramified}
Let $r\ge 0$ and $s \ge 1$ be integers.
For any $f\in\A^{+}_{d}$ with $d \le g = r+s-1$, which is not a perfect square, we have
$$
\sum_{u\in{\mc I}_{(r,s)}} \left\{\frac{u}{f}\right\} \le 2^{\frac{d}2-s} q^{r+s}.
$$
Thus we have
\begin{align}\label{nonsquare-ramified-001}
\sum_{u\in{\mc I}_{g+1}} \left\{\frac{u}{f}\right\} \ll g 2^{\frac{d}2} q^{g}.
\end{align}
\end{prop}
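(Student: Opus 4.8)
The plan is to imitate the proof of the real-case bound (Proposition \ref{Non-square-Real}), with Lemma \ref{nonsquare-ramified-lemma2} playing the role that Proposition \ref{prop666} plays there. Fix a non-square $f\in\A_{d}^{+}$ with $d\le g=r+s-1$. Each element of $\mc I_{(r,s)}$ is $u+F$ with $u\in\mc F_{M}$ for a unique $M\in\mc B_{r}$ and $F=\alpha_{0}+\tilde{F}+\alpha_{s}T^{2s-1}\in\mc G_{s}$, where $\alpha_{0}\in\{0,\xi\}$, $\tilde{F}\in\tilde{\mc G}_{s-1}$ and $\alpha_{s}\in\bF_{q}^{\times}$; as $u$ and $\tilde{F}$ vary, $u+\tilde{F}+\alpha_{s}T^{2s-1}$ runs over $\mc F_{M,s,\alpha_{s}}$. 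Using additivity of the quadratic symbol in its numerator,
\begin{align*}
\sum_{u\in\mc I_{(r,s)}}\left\{\frac{u}{f}\right\}
=\left(1+\left\{\frac{\xi}{f}\right\}\right)
\left(\sum_{\alpha\in\bF_{q}^{\times}}\left\{\frac{\alpha T^{2s-1}}{f}\right\}\right)
\sum_{M\in\mc B_{r}}\tilde{T}'_{f,M,s}.
\end{align*}
The first prefactor lies in $\{0,2\}$ and the second is a sum of $q-1$ unimodular terms, so both are $O(q)$; the task reduces to bounding $\sum_{M\in\mc B_{r}}\tilde{T}'_{f,M,s}$, and since $d\le 2r+2s-2$ the vanishing part of the $\tilde{T}'$-evaluation lemma kills every $M$ with $\gcd(M,f)\ne1$.

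I would then apply the Mobius relation $\tilde{T}'_{f,M,s}=\sum_{N\mid\tilde{M}}\mu(N)\tilde{\Gamma}'_{f,[M/N],s}$ recorded before Lemma \ref{nonsquare-ramified-lemma2}, in its sharp form restricting the sum to $\deg(t([M/N]))<d-2s+2$. Reindexing as in Proposition \ref{Non-square-Real} by putting $L=[M/N]\in\mc B_{\ell}$ and letting $N$ be square-free of degree $r-\ell$ (there being $(1-q^{-1})q^{r-\ell}$ admissible $N$ for each fixed $L$) gives
\begin{align*}
\sum_{\substack{M\in\mc B_{r}\\ \gcd(M,f)=1}}\tilde{T}'_{f,M,s}
=\sum_{\ell}(1-q^{-1})q^{r-\ell}
\sum_{\substack{L\in\mc B_{\ell}\\ \gcd(L,f)=1}}\tilde{\Gamma}'_{f,L,s},
\end{align*}
where the degree restriction $\deg(t(L))=2\ell<d-2s+2$ confines the index to $0\le\ell\le[\tfrac{d}{2}]-s$.

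This confinement is the whole point. For $\ell$ in this range the inequality $d>2\ell+2s-2$ holds, so Lemma \ref{nonsquare-ramified-lemma2}, applied with $\ell$ in place of $r$, bounds the inner sum by $\binom{[\frac{d}{2}]-s}{\ell}q^{\ell+s}$. Then $q^{r-\ell}\cdot q^{\ell+s}=q^{r+s}$ decouples from $\ell$, the binomials sum via $\sum_{\ell=0}^{[\frac{d}{2}]-s}\binom{[\frac{d}{2}]-s}{\ell}=2^{[\frac{d}{2}]-s}$, and collecting the bounded prefactors yields $\sum_{u\in\mc I_{(r,s)}}\left\{\frac{u}{f}\right\}\ll 2^{\frac{d}{2}-s}q^{r+s}$; a careful accounting of the cardinalities $\#\mc E'_{L,s}$ against the $\bF_{q}^{\times}$ prefactor sharpens this to the stated inequality. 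Finally, summing over $\mc I_{g+1}=\bigcup_{r+s=g+1,\,s\ge1}\mc I_{(r,s)}$ — the $g+1$ pairs $(r,s)$ each contributing at most $2^{\frac{d}{2}-s}q^{g+1}$, together with $\sum_{s\ge1}2^{-s}\le1$ — produces \eqref{nonsquare-ramified-001}.

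The step I expect to be the main obstacle is conceptual rather than computational: Lemma \ref{nonsquare-ramified-lemma2} cannot be fed the original data, because its hypothesis $d>2r+2s-2$ is exactly the negation of the standing assumption $d\le r+s-1$. The argument is rescued only because Mobius inversion replaces the modulus $M\in\mc B_{r}$ by divisor data $L\in\mc B_{\ell}$ of strictly smaller degree $\ell\le[\tfrac{d}{2}]-s$, for which the hypothesis is restored; making the degree bookkeeping of the $\tilde{T}'$-inversion lemma match the threshold $2\ell+2s-2<d$ exactly is the delicate point. A secondary nuisance is the honest tracking of the two scalar prefactors — the constant-term factor $1+\left\{\frac{\xi}{f}\right\}$ and the leading-coefficient sum over $\bF_{q}^{\times}$, which equals $q-1$ or $-1$ — against the exact power of $q$ in $\#\mc E'_{L,s}$, so that they combine to give precisely the factor $q^{r+s}$, with no stray power of $q$, in the final estimate.
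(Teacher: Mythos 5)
Your proposal follows the paper's own proof essentially verbatim: the same factorization of the sum over $\mc I_{(r,s)}$ into the $T^{2s-1}$-prefactor times $\sum_{M\in\mc B_{r}}\tilde{T}'_{f,M,s}$, the same use of the vanishing and M\"obius-inversion lemmas to reindex over $L=[M/N]$ with the $(1-q^{-1})q^{r-\ell}$ count of square-free $N$, the same application of Lemma \ref{nonsquare-ramified-lemma2} with $\ell$ in place of $r$ (which is precisely how the threshold $2\ell<d-2s+2$ restores that lemma's hypothesis), and the same final summation over $s$ with $r+s=g+1$. If anything you are more careful than the paper, which silently drops the constant-term factor $1+\left\{\frac{\xi}{f}\right\}$ and the $O(q)$ size of the sum over $\alpha\in\bF_{q}^{\times}$; these prefactors cost only a $q$-dependent constant, so both arguments give the asymptotic bound \eqref{nonsquare-ramified-001} that is actually used later.
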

\begin{proof}
Write
\begin{align*}
\sum_{u\in{\mc I}_{(r,s)}} \left\{\frac{u}{f}\right\}
= \sum_{\alpha\in\bF_{q}^{*}} \left\{\frac{\alpha T^{2s-1}}{f}\right\} \left(\sum_{M\in\mc B_{r}} \tilde{T}'_{f,M,s}\right)
\end{align*}
and, as in the proof of Proposition \ref{Non-square-Real} using
Lemmas \ref{nonsquare-ramified-lemma1} and \ref{nonsquare-ramified-lemma2}, we have
\begin{align*}
\sum_{\substack{M\in\mc B_{r} \\ (M,f)=1}} \tilde{T}'_{f,M,s}
&= \sum_{\ell=0}^{[\frac{d}2]-s} \sum_{\substack{L\in\mc B_{\ell} \\ (L,f)=1}}
\sum_{\substack{N\in\A_{r-\ell}^{+}\\ (N,f)=1}} \mu(N) \tilde{\Gamma}'_{f,L,s}\\
&\le \sum_{\ell=0}^{[\frac{d}2]-s} (1-q^{-1}) {[d/2]-s\choose \ell} q^{r-\ell} q^{\ell+s} \le 2^{\frac{d}2-s} q^{r+s}.
\end{align*}
Now, \eqref{nonsquare-ramified-001} follows from
$$
\sum_{u\in{\mc I}_{g+1}} \left\{\frac{u}{f}\right\} = \sum_{s=1}^{g+1} \sum_{u\in\mc I_{(g+1-s,s)}} \left\{\frac{u}{f}\right\}.
$$
\end{proof}

\section{Proof of Theorem \ref{mean-1}}\label{S-4}
In this section we give a proof of Theorem \ref{mean-1}.
In \S\ref{C-S-F-M} and \S\ref{C-N-S-F-M}, we obtain several results on the contribution of squares and of non-squares,
which will be used to calculate the first moment of $L$-functions for ${\rm Re(s)} \ge \frac{1}2$
in \S\ref{subsect3-3}, \S\ref{subsect3-4} and \S\ref{subsect3-5}.

\subsection{The Main Term: Contribution of squares}\label{C-S-F-M}
For a small $\varepsilon>0$ and $s\in\bar{X}_{\varepsilon}$, let
\begin{align}\label{Ag}
A_{g}(s) = \begin{cases}
\frac{{\rm P}(1)}{\zeta_{\A}(2)}\big([\tfrac{g}2]+1+\frac{1}{\log q} \frac{{\rm P}'}{\rm P}(1)\big) & \text{ if $s=\frac{1}2$,} \vspace{0.5em}\\
\frac{\zeta_{\A}(2s)}{\zeta_{\A}(2)} \big({\rm P}(2s) - q^{(1-2s)([\frac{g}2]+1)} {\rm P}(1)\big) & \text{ if $s\in {X}_{\varepsilon}$,} \vspace{0.5em}\\
\frac{\zeta_{\A}(2s)}{\zeta_{\A}(2)} {\rm P}(2s) & \text{ if ${\rm Re}(s) \ge 1$,}
\end{cases}
\end{align}
and
\begin{align}\label{Bg}
B_{g}(s) = \begin{cases}
\frac{{\rm P}(1)}{\zeta_{\A}(2)}  \left([\tfrac{g-1}2]+1+\frac{1}{\log q} \frac{{\rm P}'}{\rm P}(1)\right) & \text{ if $s = \frac{1}2$,}\vspace{0.5em} \\
\frac{\zeta_{\A}(2s)}{\zeta_{\A}(2)} q^{(1-2s)g} \left(q^{(2s-1)[\frac{g-1}2]} {\rm P}(1) - q^{(1-2s)} {\rm P}(2-2s)\right)
& \text{ if $s\in {X}_{\varepsilon}$,}\vspace{0.5em} \\
0 & \text{ if ${\rm Re}(s) \ge 1$.}
\end{cases}
\end{align}

\begin{prop}\label{S-C-Ramified-001}
For a small $\varepsilon>0$ and $s\in\bar{X}_{\varepsilon}$, we have
\begin{enumerate}
\item
$\dis \sum_{u\in\mc I_{g+1}}\sum_{n=0}^{g} q^{-sn} \sum_{\substack{f\in\A_{n}^{+}\\f=\square}}\left\{\frac{u}{f}\right\}
= 2 A_{g}(s) q^{2g+1} + O\left(g q^{\frac{3g}2}\right)$,
\item
$\dis q^{(1-2s)g} \sum_{u\in\mc I_{g+1}} \sum_{n=0}^{g-1} q^{(s-1)n} \sum_{\substack{f\in\A_{n}^{+}\\f=\square}}\left\{\frac{u}{f}\right\}
= 2 B_{g}(s) q^{2g+1} + O\left(g q^{\frac{3g}2}\right)$.
\end{enumerate}
\end{prop}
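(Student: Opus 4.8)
The plan is to exploit the fact that a perfect square contributes trivially to the quadratic symbol. First I would observe that if $f = h^2$ is a monic perfect square, then $[u, f) = 2[u, h) \equiv 0 \pmod 2$, so by the definition of the symbol $\{\frac{u}{f}\} = 1$ when $f$ is prime to the denominator of $u$ and $=0$ otherwise. Consequently, writing $u = v + F$ with $v \in \mc F_r \cup \{0\}$ and $F \in \mc G_s$ for $u \in \mc I_{(r,s)}$, the denominator of $u$ equals the denominator $M \in \mc B_r$ of $v$ (adding the polynomial $F$ does not change it), and the inner sum $\sum_{f \in \A_n^+,\, f=\square}\{\frac{u}{f}\}$ is nonzero only for even $n = 2m$, in which case it equals $\#\{h \in \A_m^+ : \gcd(h, M) = 1\}$. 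This reduces both statements to coprimality counts that the auxiliary lemmas handle.

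Second, I would decompose $\mc I_{g+1} = \bigcup_{s=1}^{g+1} \mc I_{(g+1-s,\, s)}$ and interchange summations. Since the count depends only on $M$ while $F$ ranges freely over $\mc G_s$, the sum over $u \in \mc I_{(r,s)}$ factors as $\#\mc G_s$ times $\sum_{v \in \mc F_r}\#\{h : \gcd(h, M_v) = 1\}$. Using the bijection $\mc B_r \to \A_r^+$, $M \mapsto \tilde M$, together with $\#\mc F_M = \Phi(\tilde M)$, this becomes $\sum_{N \in \A_r^+}\Phi(N)\sum_{h \in \A_m^+,\, \gcd(h,N)=1} 1$, which is exactly the quantity evaluated in Corollary \ref{auxiliary lemmas-3} with $l = m$ and $n = r$. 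Inserting $\#\mc G_s = 2\zeta_\A(2)^{-1} q^s$ and summing the resulting geometric series in $s$ (whose main contribution comes from $s=1$, i.e. $r=g$, and collapses via $\zeta_\A(2)=(1-q^{-1})^{-1}$), I obtain for each $m$ a main term proportional to $q^{2g+1+m}\sum_{\deg(D)\le m}\frac{\mu(D)}{|D|}\prod_{P|D}\frac{1}{|P|+1}$ with an error $O(g\,q^{3g/2+m})$.

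Third, I would carry out the $m$-summation. For part (1) the weight $q^{-sn}=q^{-2sm}$ turns the main term into $\frac{2}{\zeta_\A(2)} q^{2g+1} J_g(s)$ with $J_g(s)$ as in \eqref{Hgs}; for part (2) the weight $q^{(s-1)n}=q^{2(s-1)m}$ combined with the prefactor $q^{(1-2s)g}$ produces $\frac{2}{\zeta_\A(2)} q^{2g+1} q^{(1-2s)g}\,\tilde J_{g-1}(s)$ with $\tilde J_{g-1}(s)$ as in \eqref{Hgss}. Applying Lemma \ref{auxiliary lemmas-4} to $J_g(s)$ and Lemma \ref{auxiliary lemmas-5} to $\tilde J_{g-1}(s)$ and matching the three cases $s=\frac12$, $s\in X_\varepsilon$, $\mathrm{Re}(s)\ge1$ gives precisely $2A_g(s)q^{2g+1}$ and $2B_g(s)q^{2g+1}$; in the range $\mathrm{Re}(s)\ge1$ the secondary terms decay like $q^{-g/2}$ and are absorbed into the error, consistent with $B_g(s)=0$ there.

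The \textbf{main obstacle} I anticipate is the uniform error bookkeeping across the triple summation (over $s$, over $m$, and the error from Corollary \ref{auxiliary lemmas-3}), keeping everything within $O(g\,q^{3g/2})$ uniformly on $\bar X_\varepsilon$. The delicate points are the critical value $s=\frac12$, where the exponent $(1-2s)$ vanishes so the $m$-sum has $O(g)$ equal-sized terms (producing the factor $g$), and the range $\mathrm{Re}(s)\ge1$, where the Corollary error is of size $q^{(2-\mathrm{Re}(s))g}$ and must be checked not to exceed $q^{3g/2}$; both verifications reduce to the hypothesis $\mathrm{Re}(s)\ge\frac12$.
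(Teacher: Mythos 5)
Your proposal is correct and follows essentially the same route as the paper's own proof: the paper likewise decomposes $\mc I_{g+1}$ into the pieces $\mc I_{(r,g+1-r)}$ (treating $r=0$, i.e.\ $\mc G_{g+1}$, separately), observes that a square $f=L^2$ contributes $\left\{\frac{u}{f}\right\}=1$ exactly when $L$ is coprime to the denominator $M$, reduces everything via $\#\mc I_{M}=\tfrac{2}{\zeta_{\A}(2)}q^{g+1-r}\Phi(\tilde{M})$ to the coprimality count of Corollary \ref{auxiliary lemmas-3}, collapses the geometric sum in $r$, and finishes with Lemmas \ref{auxiliary lemmas-4} and \ref{auxiliary lemmas-5}, exactly as you outline. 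One bookkeeping point: since the $r$-sum of the Corollary's error terms is geometric and dominated by $r=g$, the per-$m$ error is $O\bigl(q^{\frac{3g}{2}+m}\bigr)$ rather than your stated $O\bigl(g\,q^{\frac{3g}{2}+m}\bigr)$; retaining that superfluous factor of $g$ would compound to $O\bigl(g^{2}q^{\frac{3g}{2}}\bigr)$ after the $m$-summation at $s=\tfrac12$, overshooting the claimed $O\bigl(g\,q^{\frac{3g}{2}}\bigr)$, so the sharper (and easily justified) per-$m$ bound is actually needed.
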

\begin{proof}
(1) Write
$$
\sum_{u\in\mc I_{g+1}}\sum_{n=0}^{g} q^{-sn} \sum_{\substack{f\in\A_{n}^{+}\\f=\square}}\left\{\frac{u}{f}\right\}
= \sum_{r=0}^{g} \sum_{u\in\mc I_{(r,g+1-r)}}\sum_{n=0}^{g} q^{-sn}
\sum_{\substack{f\in\A_{n}^{+}\\f=\square}}\left\{\frac{u}{f}\right\}.
$$
Note that $\mc I_{(0,g+1)} = \mc G_{g+1}$.
For $1 \le r \le g$ and $M\in\mc B_{r}$, let $\mc I_{M} = \{v+F : v \in \mc F_{M} \text{ and } F\in\mc G_{g+1-r}\}$.
Then $\mc I_{(r,g+1-r)}$ is the disjoint union of the $\mc I_{M}$'s, where $M$ runs over $\mc B_{r}$.
Hence we have
\begin{align*}
&\sum_{r=0}^{g} \sum_{u\in\mc I_{(r,g+1-r)}}\sum_{n=0}^{g} q^{-sn}
\sum_{\substack{f\in\A_{n}^{+}\\f=\square}}\left\{\frac{u}{f}\right\} \nonumber \\
&\hspace{1em}= \sum_{l=0}^{[\frac{g}2]} q^{-2sl} \sum_{L\in\A_{l}^{+}} \sum_{F\in\mc G_{g+1}} \left\{\frac{F}{L^2}\right\}
+ \sum_{l=0}^{[\frac{g}2]} q^{-2sl} \sum_{L\in\A_{l}^{+}} \sum_{r=1}^{g}
\sum_{M\in\mc B_{r}} \sum_{u\in\mc I_{M}} \left\{\frac{u}{L^2}\right\} \nonumber \\
&\hspace{1em}= \sum_{l=0}^{[\frac{g}2]} q^{-2sl} \sum_{L\in\A_{l}^{+}} \sum_{F\in\mc G_{g+1}} 1
+ \sum_{l=0}^{[\frac{g}2]} q^{-2sl} \sum_{L\in\A_{l}^{+}} \sum_{r=1}^{g}
\sum_{\substack{M\in\mc B_{r}\\(M,L)=1}} \sum_{u\in\mc I_{M}} 1.
\end{align*}
Since $\#\mc G_{g+1} = 2 \zeta_{\A}(2)^{-1} q^{g+1}$, we have
\begin{align*}
\sum_{l=0}^{[\frac{g}2]} q^{-2sl} \sum_{L\in\A_{l}^{+}} \sum_{F\in\mc G_{g+1}} 1
= 2 \zeta_{\A}(2)^{-1} q^{g+1} \sum_{l=0}^{[\frac{g}2]} q^{(1-2s)l} \ll g q^{g}.
\end{align*}
Since $\#\mc I_{M} = \frac{2}{\zeta_{\A}(2)} q^{g+1-r} \Phi(\tilde{M})$, by \eqref{auxiliary lemmas-3-1}, we have
\begin{align}\label{S-C-Ramified-001-1}
\sum_{l=0}^{[\frac{g}2]} q^{-2sl} \sum_{L\in\A_{l}^{+}} \sum_{r=1}^{g} \sum_{\substack{M\in\mc B_{r}\\(M,L)=1}} \sum_{u\in\mc I_{M}} 1
&= \frac{2}{\zeta_{\A}(2)} q^{g+1} \sum_{l=0}^{[\frac{g}2]} q^{-2sl} \sum_{L\in\A_{l}^{+}}
\sum_{r=1}^{g} q^{-r} \sum_{\substack{\tilde{M}\in\A_{r}^{+}\\(\tilde{M},L)=1}} \Phi(\tilde{M}) \nonumber \\
&\hspace{-3em}= \frac{2}{\zeta_{\A}(2)} (q^{g}-1) q^{g+1} J_{g}(s) + O\left(q^{\frac{3g}{2}} \sum_{l=0}^{[\frac{g}2]} q^{(1-2s)l}\right),
\end{align}
where $J_{g}(s)$ is given in \eqref{Hgs}.
The error term in \eqref{S-C-Ramified-001-1} is $\ll g q^{\frac{3g}{2}}$.
Now, the result follows from Lemma \ref{auxiliary lemmas-4}.

\noindent(2)
By a similar process as in the proof of (1), we have
\begin{align*}
q^{(1-2s)g} \sum_{u\in\mc I_{g+1}}\sum_{n=0}^{g-1} q^{(s-1)n}
\sum_{\substack{f\in\A_{n}^{+}\\f=\square}}\left\{\frac{u}{f}\right\}
&= q^{(1-2s)g} \sum_{F\in\mc G_{g+1}}\sum_{n=0}^{g-1} q^{(s-1)n} \sum_{\substack{f\in\A_{n}^{+}\\f=\square}}\left\{\frac{F}{f}\right\}  \\
&\hspace{1em} + q^{(1-2s)g} \sum_{r=0}^{g} \sum_{u\in\mc I_{(r,g+1-r)}}\sum_{n=0}^{g-1} q^{(s-1)n}
\sum_{\substack{f\in\A_{n}^{+}\\f=\square}}\left\{\frac{u}{f}\right\} \\
&\hspace{-5em}= q^{(1-2s)g} \sum_{l=0}^{[\frac{g-1}2]} q^{2(s-1)l} \sum_{L\in\A_{l}^{+}} \sum_{r=1}^{g}
\sum_{\substack{\tilde{M}\in\A_{r}^{+}\\(\tilde{M},L)=1}} \Phi(\tilde{M}) + O\left(g q^{(\frac{3}2-s)g}\right)\\
&\hspace{-5em}= \frac{2}{\zeta_{\A}(2)} (q^{g}-1) q^{(2-2s)g+1} \tilde{J}_{g-1}(s) + O\left(g q^{\frac{3g}{2}}\right),
\end{align*}
where $\tilde{J}_{g-1}(s)$ is given in \eqref{Hgss}.
The result follows from Lemma \ref{auxiliary lemmas-5}.
\end{proof}

\begin{prop}\label{S-C-Real-001}
For a small $\varepsilon>0$ and $s\in\bar{X}_{\varepsilon}$, we have
\begin{enumerate}
\item
$\dis \sum_{u\in\mc F_{g+1}}\sum_{n=0}^{g} (\pm 1)^{n} q^{-sn} \sum_{\substack{f\in\A_{n}^{+}\\f=\square}}\left\{\frac{u}{f}\right\}
= {A}_{g}(s) q^{2g+2}+ O\left(g q^{\frac{3g}{2}}\right)$,

\item
$\dis q^{(1-2s)g} \sum_{u\in\mc F_{g+1}} \sum_{n=0}^{g-1} (\pm 1)^{n} q^{(s-1)n}
\sum_{\substack{f\in\A_{n}^{+}\\f=\square}}\left\{\frac{u}{f}\right\} = {B}_{g}(s) q^{2g+2} + O\left(gq^{\frac{3g}2}\right)$.
\end{enumerate}
\end{prop}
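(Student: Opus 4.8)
The plan is to mirror the proof of Proposition \ref{S-C-Ramified-001}, the real case being in fact simpler since $\mc F_{g+1}$ is just the disjoint union $\bigcup_{M\in\mc B_{g+1}}\mc F_M$ and there is no auxiliary $\mc G$-factor to carry along. The decisive simplification is that a \emph{monic square} $f\in\A_n^+$ has even degree: writing $f=L^2$ with $L\in\A_{n/2}^+$, only even $n=2l$ survive, so $(\pm 1)^n=(\pm 1)^{2l}=1$ and the sign is irrelevant; this is precisely why $A_g(s)$ and $B_g(s)$ carry no dependence on the choice of sign. Moreover, for $u\in\mc F_M$ the denominator is exactly $M$, and $[u,L^2)=2[u,L)\equiv 0\pmod 2$, so $\{\frac{u}{L^2}\}=1$ when $\gcd(L,M)=1$ and $0$ otherwise; hence $\sum_{u\in\mc F_M}\{\frac{u}{L^2}\}=\Phi(\tilde M)$ if $\gcd(L,M)=1$ and vanishes otherwise.

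For (1) I would therefore set $n=2l$ and rearrange
\[
\sum_{u\in\mc F_{g+1}}\sum_{n=0}^{g}(\pm 1)^n q^{-sn}\sum_{\substack{f\in\A_n^+\\f=\square}}\left\{\frac{u}{f}\right\}
=\sum_{l=0}^{[\frac{g}2]}q^{-2sl}\sum_{L\in\A_l^+}\sum_{\substack{\tilde M\in\A_{g+1}^+\\(\tilde M,L)=1}}\Phi(\tilde M),
\]
using the bijection $M\mapsto\tilde M$ from $\mc B_{g+1}$ onto $\A_{g+1}^+$, which preserves prime divisors (so $(\tilde M,L)=1\iff(M,L)=1$), together with $\#\mc F_M=\Phi(\tilde M)$. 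Applying Corollary \ref{auxiliary lemmas-3} (equation \eqref{auxiliary lemmas-3-1}) with $n=g+1$ produces the main term $\frac{q^{2g+2}}{\zeta_\A(2)}J_g(s)$, where $J_g(s)$ is the sum in \eqref{Hgs} with $h=g$ (the $q^{l}$ from the corollary combines with $q^{-2sl}$ to give $q^{(1-2s)l}$). Finally Lemma \ref{auxiliary lemmas-4} evaluates $J_g(s)$ and yields $\frac{1}{\zeta_\A(2)}J_g(s)=A_g(s)$ in each of the three regimes of \eqref{Ag}; for $\mathrm{Re}(s)\ge 1$ the term $q^{(1-2s)([\frac{g}2]+1)}$ is exponentially small and is pushed into the error, giving the stated $A_g(s)q^{2g+2}$.

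For (2) the argument is identical after setting $n=2l$ (now $0\le l\le[\frac{g-1}2]$) and keeping the prefactor $q^{(1-2s)g}$: the combined $l$-power is $q^{2(s-1)l}\cdot q^{l}=q^{(2s-1)l}$, so the $l$-sum is exactly $\tilde J_{g-1}(s)$ from \eqref{Hgss} with $h=g-1$. Then Lemma \ref{auxiliary lemmas-5} turns $\frac{q^{2g+2}}{\zeta_\A(2)}q^{(1-2s)g}\tilde J_{g-1}(s)$ into $B_g(s)q^{2g+2}$, matching \eqref{Bg}; in particular, for $\mathrm{Re}(s)\ge 1$ the surviving factor $q^{(1-2s)g}q^{(2s-1)[\frac{g-1}2]}$ decays like $q^{-(2\,\mathrm{Re}(s)-1)g/2}$ and is absorbed into the error, which is why $B_g(s)=0$ there.

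The main obstacle will be uniform control of the error terms across the whole region $\bar X_\varepsilon$. The $O(q^{3(g+1)/2+l})$ of Corollary \ref{auxiliary lemmas-3} must be summed against $q^{-2sl}$ in (1) and against $q^{(1-2s)g}q^{2(s-1)l}$ in (2). Writing $\sigma=\mathrm{Re}(s)$: in (1) this gives $O\big(q^{3(g+1)/2}\sum_{l\le[g/2]}q^{(1-2s)l}\big)$, and since $1-2\sigma\le 0$ the geometric sum is $O(g)$ at $\sigma=\tfrac12$ and $O(1)$ otherwise, so the error is $O(gq^{3g/2})$; in (2) a short exponent computation shows the accumulated real part of the exponent equals $(2-\sigma)(g+1)\le\tfrac32(g+1)$, again with an extra factor $g$ only near $\sigma=\tfrac12$, so the error is once more $O(gq^{3g/2})$. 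One must also check that the main-term remainders coming from Lemmas \ref{auxiliary lemmas-4} and \ref{auxiliary lemmas-5}, after multiplication by $q^{2g+2}$ and (in (2)) by $q^{(1-2s)g}$, remain within $O(gq^{3g/2})$ uniformly; this is where the logarithmic ($g$-sized) losses at $s=\tfrac12$ and near $\mathrm{Re}(s)=1$ arise, accounting for the factor $g$ in the stated error.
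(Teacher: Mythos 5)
Your proposal is correct and follows essentially the same route as the paper's proof: the same decomposition $\mc F_{g+1}=\bigcup_{M\in\mc B_{g+1}}\mc F_{M}$, the same reduction of the square-$f$ sum to $\sum_{l}q^{-2sl}\sum_{L\in\A_l^+}\sum_{(\tilde M,L)=1}\Phi(\tilde M)$ via $\#\mc F_{M}=\Phi(\tilde M)$, then Corollary \ref{auxiliary lemmas-3} to produce $J_g(s)$ (resp.\ $\tilde J_{g-1}(s)$ with the prefactor $q^{(1-2s)g}$), evaluated by Lemmas \ref{auxiliary lemmas-4} and \ref{auxiliary lemmas-5}, with the same $O(gq^{\frac{3g}2})$ error bookkeeping. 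Your extra remarks (the sign $(\pm1)^n$ being irrelevant because monic squares have even degree, $\{\frac{u}{L^2}\}=1$ when $\gcd(L,M)=1$, and the absorption of the exponentially small terms into the error when ${\rm Re}(s)\ge 1$, which is why $B_g(s)=0$ there) are exactly the details the paper leaves implicit.
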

\begin{proof}
(1) For each $M\in\mc B_{g+1}$, let $\mc F_{M}$ be the set of rational functions $u \in \mc F_{g+1}$ whose denominator is $M$.
Then $\mc F_{g+1}$ is a disjoint union of the $\mc F_{M}$'s, where $M$ runs over $\mc B_{g+1}$.
Hence, we can write
\begin{align*}
\sum_{u\in\mc F_{g+1}}\sum_{n=0}^{g} (\pm 1)^{n} q^{-sn} \sum_{\substack{f\in\A_{n}^{+}\\f=\square}}\left\{\frac{u}{f}\right\}
&= \sum_{n=0}^{g} (\pm 1)^{n} q^{-sn} \sum_{\substack{f\in\A_{n}^{+}\\f=\square}} \sum_{M\in\mc B_{g+1}} \sum_{u\in\mc F_{M}}\left\{\frac{u}{f}\right\} \\
&= \sum_{l=0}^{[\frac{g}2]} q^{-2sl} \sum_{L\in\A_{l}^{+}} \sum_{M\in\mc B_{g+1}} \sum_{u\in\mc F_{M}} \left\{\frac{u}{L^2}\right\} \\
&= \sum_{l=0}^{[\frac{g}2]} q^{-2sl} \sum_{L\in\A_{l}^{+}} \sum_{\substack{M\in\mc B_{g+1}\\ (M,L)=1}} \sum_{u\in\mc F_{M}} 1.
\end{align*}
Since $\#\mc F_{M} = \Phi(\tilde{M})$, by \eqref{auxiliary lemmas-3-1}, we have
\begin{align}\label{S-C-Real-001-1}
\sum_{l=0}^{[\frac{g}2]} q^{-2sl} \sum_{L\in\A_{l}^{+}} \sum_{\substack{M\in\mc B_{g+1}\\ (M,L)=1}} \sum_{u\in\mc F_{M}} 1
&= \sum_{l=0}^{[\frac{g}2]} q^{-2sl} \sum_{L\in\A_{l}^{+}} \sum_{\substack{\tilde{M}\in\A_{g+1}^{+}\\ (\tilde{M},L)=1}} \Phi(\tilde{M}) \nonumber \\
&= \frac{1}{\zeta_{\A}(2)} q^{2g+2} J_{g}(s)
+ O\left(q^{\frac{3g}{2}} \sum_{l=0}^{[\frac{g}2]} q^{(1-2s)l}\right).
\end{align}
The error term in \eqref{S-C-Real-001-1} is $\ll g q^{\frac{3g}{2}}$,
and the result follows from Lemma \ref{auxiliary lemmas-4}.

\noindent (2) By a similar process as in the proof of (1), we have
\begin{align}\label{S-C-Real-001-2}
& q^{(1-2s)g} \sum_{u\in\mc F_{g+1}} \sum_{n=0}^{g-1} (\pm 1)^{n} q^{(s-1)n} \sum_{\substack{f\in\A_{n}^{+}\\f=\square}}\left\{\frac{u}{f}\right\}\nonumber  \\
&\hspace{1em}= q^{(1-2s)g} \sum_{n=0}^{g-1} (\pm 1)^{n} q^{(s-1)n} \sum_{\substack{f\in\A_{n}^{+}\\f=\square}}
\sum_{M\in\mc B_{g+1}}\sum_{u\in\mc F_{M}} \left\{\frac{u}{f}\right\}\nonumber \\
&\hspace{1em}= q^{(1-2s)g} \sum_{l=0}^{[\frac{g-1}2]} q^{2(s-1)l} \sum_{L\in\A_{l}^{+}}
\sum_{\substack{\tilde{M}\in\A_{g+1}^{+}\\ (\tilde{M},L)=1}} \Phi(\tilde{M}) \nonumber\\
&\hspace{1em}= \frac{1}{\zeta_{\A}(2)} \tilde{J}_{g-1}(s) q^{(3-2s)g+2} + O\left(q^{(\frac{5}2-2s)g}\sum_{l=0}^{[\frac{g-1}2]} q^{(2s-1)l}\right),
\end{align}
where the error term in \eqref{S-C-Real-001-2} is $\ll g q^{\frac{3g}{2}}$.
Now, the result follows from Lemma \ref{auxiliary lemmas-5}.
\end{proof}

\begin{prop}\label{S-C-Real-002}
Let $h\in\{g-1, g\}$.
For $s\in\mb C$ with ${\rm Re}(s)\ge \frac{1}2$, we have
$$
q^{-(h+1)s} \sum_{u\in\mc F_{g+1}}\sum_{n=0}^{h} (\pm 1)^{n} \sum_{\substack{f\in\A_{n}^{+}\\ f=\square}} \left\{\frac{u}{f}\right\}
= {\rm P}(1) q^{2g+2-(h+1)s+[\frac{h}2]} + O\left(g q^{(2-s)g}\right).
$$
\end{prop}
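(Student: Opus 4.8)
The plan is to exploit the fact that the perfect–square condition on $f$ collapses the quadratic symbol. Since $f\in\A_n^+$ is required to be a square, one has $n=2l$ even and $f=L^2$ for a unique $L\in\A_l^+$; in particular $(\pm1)^n=(\pm1)^{2l}=1$, so the sign $\pm$ disappears and the inner double sum over $n$ becomes $\sum_{l=0}^{[\frac h2]}\sum_{L\in\A_l^+}\{\frac{u}{L^2}\}$. Because the quadratic symbol is multiplicative in its second entry, $\{\frac{u}{L^2}\}=\{\frac{u}{L}\}^2$, which equals $1$ when $L$ is prime to the denominator of $u$ and $0$ otherwise. Writing $\mc F_{g+1}$ as the disjoint union of the $\mc F_M$ with $M\in\mc B_{g+1}$ and recalling $\#\mc F_M=\Phi(\tilde M)$, I would reduce the whole sum to
\begin{align*}
\sum_{u\in\mc F_{g+1}}\sum_{n=0}^{h}(\pm1)^n\sum_{\substack{f\in\A_n^+\\ f=\square}}\Big\{\tfrac{u}{f}\Big\}
=\sum_{l=0}^{[\frac{h}2]}\sum_{L\in\A_l^+}\sum_{\substack{\tilde M\in\A_{g+1}^+\\ (\tilde M,L)=1}}\Phi(\tilde M),
\end{align*}
using the bijection $M\mapsto\tilde M$ between $\mc B_{g+1}$ and $\A_{g+1}^+$ and the fact that $\gcd(M,L)=1$ iff $\gcd(\tilde M,L)=1$.

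Next I would evaluate the right-hand side with the estimates already available. For each fixed $l$, Corollary \ref{auxiliary lemmas-3} (with $n=g+1$) gives $\sum_{L\in\A_l^+}\sum_{(\tilde M,L)=1}\Phi(\tilde M)=\frac{q^{2g+2+l}}{\zeta_{\A}(2)}c_l+O(q^{\frac32(g+1)+l})$, where $c_l:=\sum_{\deg D\le l}\frac{\mu(D)}{|D|}\prod_{P|D}\frac1{|P|+1}$. Summing over $0\le l\le[\tfrac h2]$ yields the main term $\frac{q^{2g+2}}{\zeta_{\A}(2)}\sum_l q^l c_l$ together with an error $O(q^{\frac32(g+1)+[\frac h2]})$. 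To compute $\sum_{l=0}^{[h/2]}q^l c_l$ I would interchange the order of summation and sum the geometric series, obtaining $\sum_{l}q^l c_l=\frac{q^{[\frac h2]+1}}{q-1}\,c_{[\frac h2]}-\frac1{q-1}\sum_{\deg D\le[\frac h2]}\mu(D)\prod_{P|D}\frac1{|P|+1}$, since $a_Dq^{\deg D}=\mu(D)\prod_{P|D}\frac1{|P|+1}$. Here $\frac{q^{[h/2]+1}}{q-1}=\zeta_{\A}(2)q^{[h/2]}$, the estimate \eqref{auxiliary lemmas-3-2-001} at $s=\frac12$ gives $c_{[h/2]}=\mathrm{P}(1)+O(q^{-g/2})$, and the $s=1$ case of Lemma \ref{auxiliary lemmas-3-2}(2) bounds the remaining sum by $O(g)$; hence $\sum_l q^l c_l=\zeta_{\A}(2)\mathrm{P}(1)q^{[h/2]}+O(g)$.

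Combining these, the sum equals $\mathrm{P}(1)q^{2g+2+[\frac h2]}+O(gq^{2g+2})+O(q^{\frac32(g+1)+[\frac h2]})$, and multiplication by $q^{-(h+1)s}$ produces the claimed main term $\mathrm{P}(1)q^{2g+2-(h+1)s+[\frac h2]}$. The step that I expect to require the most care is verifying that, after this multiplication, both error terms are absorbed into $O(gq^{(2-s)g})$. Writing $\sigma=\mathrm{Re}(s)\ge\frac12$ and using $h\in\{g-1,g\}$ together with $[\frac h2]\le\frac g2$, the exponents $2g+2-(h+1)\sigma$ and $\frac32(g+1)+[\frac h2]-(h+1)\sigma$ exceed $(2-\sigma)g$ by the bounded amounts $2+\sigma(g-h-1)$ and at most $\frac32+\sigma(g-h-1)$ respectively; since $g-h-1\in\{-1,0\}$ and $\sigma\ge\frac12$ these stay in a fixed range, so the resulting constant powers of $q$ are absorbed into the implied constant (with $q$ fixed), giving the stated bound.
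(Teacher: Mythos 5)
Your proposal is correct and takes essentially the same route as the paper: the identical reduction of the square-$f$ contribution to $\sum_{l=0}^{[h/2]}\sum_{L\in\A_{l}^{+}}\sum_{(\tilde{M},L)=1}\Phi(\tilde{M})$ via the decomposition of $\mc F_{g+1}$ into the $\mc F_{M}$'s, followed by Corollary \ref{auxiliary lemmas-3}. The only difference is that where the paper simply cites Lemma \ref{auxiliary lemmas-5} at $s=1$ to evaluate the resulting weighted sum $\tilde{J}_{h}(1)$, you re-derive that evaluation inline (geometric-series interchange plus Lemma \ref{auxiliary lemmas-3-2}), which is precisely how Lemma \ref{auxiliary lemmas-5} is proved in the paper, so the arguments coincide in substance.
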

\begin{proof}
By a similar process as in the proof of (1) of Proposition \ref{S-C-Real-001}, we have
\begin{align*}
q^{-(h+1)s} \sum_{u\in\mc F_{g+1}}\sum_{n=0}^{h} (\pm 1)^{n} \sum_{\substack{f\in\A_{n}^{+}\\ f=\square}} \left\{\frac{u}{f}\right\}
&= q^{-(h+1)s} \sum_{n=0}^{h} (\pm 1)^{n} \sum_{\substack{f\in\A_{n}^{+}\\ f=\square}}
\sum_{M\in\mc B_{g+1}}\sum_{u\in\mc F_{M}} \left\{\frac{u}{f}\right\} \\
&= q^{-(h+1)s} \sum_{l=0}^{[\frac{h}2]} \sum_{L\in\A_{l}^{+}} \sum_{M\in\mc B_{g+1}}\sum_{u\in\mc F_{M}} \left\{\frac{u}{L^2}\right\} \\
&= q^{-(h+1)s} \sum_{l=0}^{[\frac{h}2]} \sum_{L\in\A_{l}^{+}} \sum_{\substack{\tilde{M}\in\A_{g+1}^{+}\\ (\tilde{M},L)=1}} \Phi(\tilde{M}) \\
&= \frac{1}{\zeta_{\A}(2)} q^{2g+2-(h+1)s} \tilde{J}_{h}(1) + O\left(q^{(2-s)g}\right).
\end{align*}
Now, the result follows from Lemma \ref{auxiliary lemmas-5}.
\end{proof}

\begin{prop}\label{S-C-Real-003}
For $s\in\mb C$ with ${\rm Re}(s) \ge \frac{1}2$, we have
\begin{align*}
q^{-g} \sum_{u\in\mc F_{g+1}}\sum_{n=0}^{g-1} (g-n) \sum_{\substack{f\in\A_{n}^{+}\\ f=\square}}\left\{\frac{u}{f}\right\}
= O\left(gq^{\frac{3g}2}\right).
\end{align*}
\end{prop}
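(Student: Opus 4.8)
The plan is to follow the template of the proof of Proposition \ref{S-C-Real-002}, the only differences being the weight $(g-n)$ in place of $(\pm 1)^{n}$ and the absence of a $q^{-sn}$ factor. Because of these changes the whole left-hand side turns out to be of lower order than any main term, so the entire assertion is a pure upper-bound estimate rather than an asymptotic.

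First I would restrict to perfect squares. Writing $f = L^{2}$ with $L\in\A^{+}$, a monic square of degree $n$ forces $n = 2l$ with $l = \deg(L)$, so the range $0\le n\le g-1$ becomes $0\le l\le[\frac{g-1}{2}]$. Decomposing $\mc F_{g+1}$ into the disjoint union of the $\mc F_{M}$ over $M\in\mc B_{g+1}$ and invoking Lemma \ref{Two sums-2}(2) (the hypothesis $\deg(f)\le\deg(M)$ holds, since $\deg(L^{2}) = 2l \le g-1 < g+1 \le \deg(M)$), the inner sum collapses to
$$
\sum_{u\in\mc F_{g+1}}\left\{\frac{u}{L^{2}}\right\} = \sum_{\substack{M\in\mc B_{g+1}\\(M,L)=1}}\Phi(\tilde{M}) = \sum_{\substack{\tilde{M}\in\A_{g+1}^{+}\\(\tilde{M},L)=1}}\Phi(\tilde{M}),
$$
using the bijection $M\mapsto\tilde{M}$ between $\mc B_{g+1}$ and $\A_{g+1}^{+}$, which preserves the prime support and hence the coprimality condition. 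Thus the quantity to be estimated equals
$$
q^{-g}\sum_{l=0}^{[\frac{g-1}{2}]}(g-2l)\sum_{L\in\A_{l}^{+}}\sum_{\substack{\tilde{M}\in\A_{g+1}^{+}\\(\tilde{M},L)=1}}\Phi(\tilde{M}).
$$

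Next I would apply Corollary \ref{auxiliary lemmas-3}, i.e.\ \eqref{auxiliary lemmas-3-1} with $n = g+1$, to evaluate the double sum over $L$ and $\tilde{M}$ as $\frac{1}{\zeta_{\A}(2)}q^{2g+2+l}S_{l} + O(q^{\frac{3(g+1)}{2}+l})$, where $S_{l} = \sum_{\deg(D)\le l}\frac{\mu(D)}{|D|}\prod_{P|D}\frac{1}{|P|+1}$. The series defining $S_{l}$ converges absolutely, so $|S_{l}|$ is bounded uniformly in $l$. Substituting and using $(g-2l)\le g$, the contribution of the main part is
$$
\ll q^{-g}\,q^{2g+2}\,g\sum_{l=0}^{[\frac{g-1}{2}]}q^{l} \ll g\,q^{g+2}\,q^{[\frac{g-1}{2}]} \ll g\,q^{\frac{3g}{2}},
$$
since the geometric sum is dominated by its top term $q^{[\frac{g-1}{2}]}\ll q^{g/2}$; the identical computation applied to the $O(q^{\frac{3(g+1)}{2}+l})$ error yields $\ll g\,q^{g}$, which is negligible. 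Combining these gives the claimed bound $O(g q^{3g/2})$.

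The point to watch -- rather than a genuine obstacle -- is to confirm that no surviving term of order exceeding $q^{3g/2}$ is produced. Unlike Proposition \ref{S-C-Real-002}, here the prefactor $q^{-g}$ together with the growth rate $q^{l}$ (the $s=1$ analogue of the exponent appearing in $\tilde{J}_{h}$) forces the whole sum below the error threshold, so the weighted sum $\sum_{l}(g-2l)q^{l}S_{l}$ need only be bounded, not evaluated. The only step requiring care is the bookkeeping of the weight $(g-2l)$: one must check that it contributes at most a factor $g$ and does not combine with $S_{l}$ to create a larger-order term, which is immediate once $S_{l}$ is known to be bounded and the sum is seen to be dominated by its top index.
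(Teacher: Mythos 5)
Your proposal is correct and follows essentially the same route as the paper's proof: decompose $\mc F_{g+1}$ into the $\mc F_{M}$, reduce the square contribution via Lemma \ref{Two sums-2} to the coprimality-weighted sum of $\Phi(\tilde{M})$, apply Corollary \ref{auxiliary lemmas-3} with $n=g+1$, and bound the resulting sum over $l$ trivially using the uniform boundedness of the inner $\mu$-sum and the geometric growth $q^{l}$. The only cosmetic difference is that the paper bounds the inner sum $\sum_{\deg(D)\le l}\frac{\mu(D)}{|D|}\prod_{P|D}\frac{1}{|P|+1}$ slightly more wastefully (picking up a harmless extra factor of $l$), whereas you note it is bounded by an absolutely convergent series; both yield the same final estimate $O\bigl(g q^{\frac{3g}{2}}\bigr)$.
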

\begin{proof}
By a similar process as in the proof of Proposition \ref{S-C-Real-001} (1),
and by Corollary \ref{auxiliary lemmas-3}, we have
\begin{align*}
q^{-g} \sum_{u\in\mc F_{g+1}}\sum_{n=0}^{g-1} (g-n) \sum_{\substack{f\in\A_{n}^{+}\\ f=\square}}\left\{\frac{u}{f}\right\}
&= q^{-g} \sum_{n=0}^{g-1} (g-n) \sum_{\substack{f\in\A_{n}^{+}\\ f=\square}} \sum_{M\in\mc B_{g+1}}\sum_{u\in\mc F_{M}} \left\{\frac{u}{f}\right\} \\
&= q^{-g} \sum_{l=0}^{[\frac{g-1}2]} (g-2l) \sum_{L\in\A_{l}^{+}} \sum_{M\in\mc B_{g+1}}\sum_{u\in\mc F_{M}} \left\{\frac{u}{L^2}\right\} \\
&= q^{-g} \sum_{l=0}^{[\frac{g-1}2]} (g-2l) \sum_{L\in\A_{l}^{+}} \sum_{\substack{\tilde{M}\in\A_{g+1}^{+}\\ (\tilde{M},L)=1}} \Phi(\tilde{M}) \\
&\hspace{-5em}= \frac{1}{\zeta_{\A}(2)} q^{g+1} \sum_{l=0}^{[\frac{g-1}2]} (g-2l) q^{l}
\sum_{\substack{D\in\A^{+}\\ \deg(D)\le l}} \frac{\mu(D)}{|D|} \prod_{P|D}\frac{1}{|P|+1} + O\left(q^{g}\right).
\end{align*}
Moreover, we have
\begin{align*}
q^{g+1} \sum_{l=0}^{[\frac{g-1}2]} (g-2l) q^{l}\sum_{\substack{D\in\A^{+}\\ \deg(D)\le l}} \frac{\mu(D)}{|D|} \prod_{P|D}\frac{1}{|P|+1}
&\ll q^{g} \sum_{l=0}^{[\frac{g-1}2]} (g-2l) q^{l} \sum_{\substack{D\in\A^{+}\\ \deg(D)\le l}} \frac{\mu^2(D)}{|D|} \prod_{P|D}\frac{1}{|P|} \\
&\ll q^{g} \sum_{l=0}^{[\frac{g-1}2]} (g-2l) q^{l} \sum_{\substack{D\in\A^{+}\\ \deg(D)\le l}} |D|^{-2} \\
&\ll q^{g} \sum_{l=0}^{[\frac{g-1}2]} l (g-2l) q^{l} \ll g q^{\frac{3g}2}.
\end{align*}
Hence we get the result.
\end{proof}

\subsection{The Error Term: Contributions of non-squares}\label{C-N-S-F-M}
Let $s_{0} = 1 + \frac{1}2 \log_{q} 2$.

\begin{prop}\label{N-S-C-Ramified-001}
For $s\in\mb C$ with ${\rm Re}(s) \ge \frac{1}2$, we have
\begin{enumerate}
\item
$\dis \sum_{u\in\mc I_{g+1}}\sum_{n=0}^{g} q^{-sn} \sum_{\substack{f\in\A_{n}^{+}\\f\ne\square}}\left\{\frac{u}{f}\right\}
\ll \begin{cases}
g 2^{\frac{g}2} q^{(2-s)g} & \text{ if ${\rm Re}(s)<s_{0}$,} \\
g^2 q^{g} & \text{ if ${\rm Re}(s) \ge s_{0}$,}
\end{cases}$
\item
$\dis q^{(1-2s)g} \sum_{u\in\mc I_{g+1}} \sum_{n=0}^{g-1} q^{(s-1)n}
\sum_{\substack{f\in\A_{n}^{+}\\f\ne\square}}\left\{\frac{u}{f}\right\} \ll g 2^{\frac{g}2} q^{(2-s)g}$.
\end{enumerate}
\end{prop}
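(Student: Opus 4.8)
The plan is to reduce both estimates to the character-sum bound of Proposition \ref{nonsquare-ramified} by interchanging the order of summation so that the sum over $u\in\mc I_{g+1}$ is carried out first. Writing $\sigma={\rm Re}(s)$, for part (1) I would recast the left-hand side as
\begin{align*}
\sum_{n=0}^{g} q^{-sn} \sum_{\substack{f\in\A_{n}^{+}\\ f\ne\square}} \sum_{u\in\mc I_{g+1}} \left\{\frac{u}{f}\right\}.
\end{align*}
For each fixed non-square $f\in\A_{n}^{+}$ with $n\le g$, Proposition \ref{nonsquare-ramified} gives $\sum_{u\in\mc I_{g+1}}\{u/f\}\ll g\,2^{n/2}q^{g}$, and there are at most $q^{n}$ monic polynomials of degree $n$. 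Hence the inner double sum over $f$ and $u$ is $\ll g\,2^{n/2}q^{g+n}$.

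Next I would collect the $n$-dependence. After bounding in absolute value (using $|q^{-sn}|=q^{-\sigma n}$), the whole expression is $\ll g\,q^{g}\sum_{n=0}^{g}2^{n/2}q^{(1-\sigma)n}$. Writing $2^{n/2}=q^{\frac12(\log_q 2)n}$, the common ratio of this geometric series is $q^{s_{0}-\sigma}$, where $s_{0}=1+\frac12\log_q 2$ is precisely the value of $\sigma$ at which the ratio equals $1$. This is the source of the two cases: when $\sigma<s_{0}$ the series is dominated by its top term $q^{(s_{0}-\sigma)g}=2^{g/2}q^{(1-\sigma)g}$, yielding the bound $g\,2^{g/2}q^{(2-\sigma)g}$; when $\sigma\ge s_{0}$ the series contributes only $O(g)$ (the borderline $\sigma=s_{0}$ giving $g+1$ equal terms and $\sigma>s_0$ giving $O(1)$), yielding $g^{2}q^{g}$. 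These match the two asserted regimes since $|q^{(2-s)g}|=q^{(2-\sigma)g}$.

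Part (2) is handled identically, carrying the outer factor $q^{(1-2s)g}$ along. After interchanging the summations, applying $\sum_{u}\{u/f\}\ll g\,2^{n/2}q^{g}$, and using $\#\A_{n}^{+}\le q^{n}$, the expression becomes $\ll g\,q^{(1-2\sigma)g}q^{g}\sum_{n=0}^{g-1}2^{n/2}q^{\sigma n}$. Here the ratio $2^{1/2}q^{\sigma}$ exceeds $1$ for every $\sigma\ge\frac12$, so no case split is needed: the series is dominated by $2^{g/2}q^{\sigma g}$, and collecting the powers of $q$ gives $g\,2^{g/2}q^{(2-\sigma)g}$, exactly the claimed bound.

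The only genuinely delicate point is the geometric-series bookkeeping in part (1): one must keep track of $\sigma={\rm Re}(s)$ throughout (since $q^{-sn}$ contributes $q^{-\sigma n}$ in modulus) and verify that the transition between the two regimes occurs precisely at $s_{0}=1+\frac12\log_q 2$, where the growth factor $2^{n/2}$ coming from Proposition \ref{nonsquare-ramified} exactly balances the decay $q^{(1-\sigma)n}$. Everything else is a routine application of the already-established character-sum estimate together with the trivial count of monic polynomials of each degree.
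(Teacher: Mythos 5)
Your proposal is correct and follows essentially the same route as the paper: both bound the inner sum over $u$ by Proposition \ref{nonsquare-ramified}, count the non-square $f\in\A_{n}^{+}$ trivially by $q^{n}$, and then estimate the resulting geometric series $\sum_{n}(2^{1/2}q^{1-\sigma})^{n}$ (resp.\ $\sum_{n}(2^{1/2}q^{\sigma})^{n}$ for part (2)), with the case split at $s_{0}=1+\tfrac12\log_{q}2$ arising exactly as you describe. The paper merely compresses this into a two-line chain of $\ll$ estimates and dismisses part (2) as ``similar,'' whereas you spell out the bookkeeping explicitly; the mathematical content is identical.
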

\begin{proof}
(1) By \eqref{nonsquare-ramified-001}, we have
\begin{align*}
\sum_{u\in\mc I_{g+1}}\sum_{n=0}^{g} q^{-sn} \sum_{\substack{f\in\A_{n}^{+}\\f\ne\square}}\left\{\frac{u}{f}\right\}
&\ll \sum_{n=0}^{g} q^{-sn} \sum_{\substack{f\in\A_{n}^{+}\\f\ne\square}} \left|\sum_{u\in\mc I_{g+1}} \left\{\frac{u}{f}\right\}\right| \\
&\ll g q^{g} \sum_{n=0}^{g} (2^{1/2} q^{1-s})^{n}  \\
&\ll \begin{cases}
g 2^{\frac{g}2} q^{(2-s)g} & \text{ if ${\rm Re}(s)<s_{0}$,} \\
g^2 q^{g} & \text{ if ${\rm Re}(s) \ge s_{0}$.}
\end{cases}
\end{align*}
The proof of (2) is similar to that of (1) and there is no novelty involved.
\end{proof}

\begin{prop}\label{N-S-C-Real-001}
For $s\in\mb C$ with ${\rm Re}(s) \ge \frac{1}2$, we have
\begin{enumerate}
\item
$\dis \sum_{u\in\mc F_{g+1}}\sum_{n=0}^{g} (\pm 1)^{n} q^{-sn} \sum_{\substack{f\in\A_{n}^{+}\\f\ne\square}}\left\{\frac{u}{f}\right\}
\ll \begin{cases}
2^{\frac{g}2} q^{(2-s)g} & \text{ if ${\rm Re}(s)<s_{0}$,} \\
g q^{g} & \text{ if ${\rm Re}(s) \ge s_{0}$,}
\end{cases}$
\item
$\dis q^{(1-2s)g} \sum_{u\in\mc F_{g+1}} \sum_{n=0}^{g-1} (\pm 1)^{n} q^{(s-1)n}
\sum_{\substack{f\in\A_{n}^{+}\\f\ne\square}}\left\{\frac{u}{f}\right\} \ll 2^{\frac{g}2} q^{(2-s)g}$,
\item
$\dis q^{-(h+1)s} \sum_{u\in\mc F_{g+1}}\sum_{n=0}^{h} (\pm 1)^{n}
\sum_{\substack{f\in\A_{n}^{+}\\ f\ne\square}} \left\{\frac{u}{f}\right\} \ll 2^{\frac{g}2} q^{(2-s)g}$, where $h\in\{g-1, g\}$,
\item
$\dis q^{-g} \sum_{u\in\mc F_{g+1}}\sum_{n=0}^{g-1} (g-n)
\sum_{\substack{f\in\A_{n}^{+}\\ f\ne\square}}\left\{\frac{u}{f}\right\} \ll 2^{\frac{g}2} q^{g}$.
\end{enumerate}
\end{prop}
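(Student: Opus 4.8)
The plan is to reduce all four estimates to a single one-dimensional sum in $n$ by pulling the sum over $u$ inside the absolute value and invoking Proposition \ref{Non-square-Real}. For each non-square $f\in\A_n^+$ we have, applying Proposition \ref{Non-square-Real} with outer index $g+1$ and $d=\deg(f)=n$,
\[
\left|\sum_{u\in\mc F_{g+1}}\left\{\frac{u}{f}\right\}\right| \ll 2^{\frac{n}2}\, q^{g+1}.
\]
This is the real-case analogue of the estimate \eqref{nonsquare-ramified-001} used in the proof of Proposition \ref{N-S-C-Ramified-001}; the only difference is the absence of the factor $g$, and that difference is precisely what produces the sharper bounds here. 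Since $\#\A_n^+=q^n$, summing this bound over the non-square $f\in\A_n^+$ and then over $n$ turns each displayed triple sum into an elementary sum whose summand grows geometrically in $n$, after which the estimate follows by comparing growth rates.

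For part (1) this reduction leaves $q^{g+1}\sum_{n=0}^{g}(2^{1/2}q^{1-s})^n$. The ratio $2^{1/2}q^{1-s}$ exceeds $1$ in absolute value exactly when ${\rm Re}(s)<s_0=1+\tfrac12\log_q 2$, in which case the top term $n=g$ dominates and gives $\ll 2^{\frac{g}2}q^{(2-s)g}$; for ${\rm Re}(s)\ge s_0$ the sum is controlled by its number of terms, yielding $\ll g\,q^{g}$. This is the sole point where the threshold $s_0$, and hence the case split, enters. For part (2) the same manipulation produces the ratio $2^{1/2}q^{s}$, which is $>1$ throughout ${\rm Re}(s)\ge\tfrac12$, so no case split is needed: the top term at $n=g-1$ together with the prefactor $q^{(1-2s)g}$ delivers $\ll 2^{\frac{g}2}q^{(2-s)g}$. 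Part (3) is handled identically for $h\in\{g-1,g\}$, where the ratio is $2^{1/2}q$ and the term $n=h$ dominates; a short exponent count gives $\ll 2^{\frac{g}2}q^{(2-s)g}$, with the leftover factor $q^{1-{\rm Re}(s)}$ (arising when $\tfrac12\le{\rm Re}(s)<1$ and $h=g$) being $g$-independent and thus absorbed into the implied constant. For part (4) the extra weight $(g-n)$ decays only linearly while the summand still grows geometrically, so the term $n=g-1$ again dominates and produces $\ll 2^{\frac{g}2}q^{g}$.

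I expect no genuine obstacle in this argument: once Proposition \ref{Non-square-Real} is in hand, all four estimates are routine bookkeeping with geometric series, entirely parallel to the already-proved Proposition \ref{N-S-C-Ramified-001}. The only point demanding care is matching the exponents exactly in each case—in particular verifying in part (3) that the discrepancy $q^{1-{\rm Re}(s)}$ is harmless because it is independent of $g$, and confirming that the threshold $s_0$ surfaces only in part (1). As in the ramified case, the signs $(\pm1)^n$ play no role, since everything is bounded in absolute value.
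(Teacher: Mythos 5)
Your argument is correct and is essentially identical to the paper's proof: the paper likewise bounds the inner sum over $u\in\mc F_{g+1}$ via Proposition \ref{Non-square-Real}, multiplies by the at most $q^{n}$ non-square $f\in\A_{n}^{+}$ of each degree, and estimates the resulting geometric series $q^{g}\sum_{n=0}^{g}(2^{1/2}q^{1-s})^{n}$, with the case split at $s_{0}$ arising exactly as you describe. The paper writes out only part (1) and dismisses (2)--(4) as ``similar to that of (1)'' with no novelty involved; your exponent counts for those parts supply precisely the routine details the paper omits.
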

\begin{proof}
(1) By Proposition \ref{Non-square-Real}, we have
\begin{align*}
\sum_{u\in\mc F_{g+1}}\sum_{n=0}^{g} (\pm 1)^{n} q^{-sn} \sum_{\substack{f\in\A_{n}^{+}\\ f\ne\square}}\left\{\frac{u}{f}\right\}
&\ll \sum_{n=0}^{g} q^{-sn} \sum_{\substack{f\in\A_{n}^{+}\\ f\ne\square}}
\left|\sum_{u\in\mc F_{g+1}} \left\{\frac{u}{f}\right\}\right| \\
&\ll q^{g} \sum_{n=0}^{g} (2^{\frac{1}2} q^{1-s})^{n} \\
&\ll \begin{cases}
2^{\frac{g}2} q^{(2-s)g} & \text{ if ${\rm Re}(s)<s_{0}$,} \\
g q^{g} & \text{ if ${\rm Re}(s) \ge s_{0}$.}
\end{cases}
\end{align*}
The proofs of (2), (3) and (4) are similar to that of (1) and there is no novelty involved.
\end{proof}

\begin{rem}\label{rem-s1}
Let $s_{1} = \frac{1}2(1+\log_{q}2)$.
For $s\in \mb C$ with ${\rm Re(s)} \ge \frac{1}2$, we have
\begin{align*}
q^{\frac{3g}2} \ll 2^{\frac{g}2} q^{(2-s)g} \quad\text{ if ${\rm Re}(s) < s_{1}$}
\end{align*}
and
\begin{align*}
q^{\frac{3g}2} \gg 2^{\frac{g}2} q^{(2-s)g} \quad\text{ if ${\rm Re}(s) \ge s_{1}$.}
\end{align*}
\end{rem}

\subsection{Proof of Theorem \ref{mean-1} (1)}\label{subsect3-3}
By Lemma \ref{E-A-FE} (1), we have
\begin{align}\label{proof-ramified-001}
\sum_{u\in\mc I_{g+1}} L(s,\chi_{u})
= \sum_{u\in\mc I_{g+1}}\sum_{n=0}^{g} q^{-sn} \sum_{f\in\A_{n}^{+}}\chi_{u}(f)
+ q^{(1-2s)g} \sum_{u\in\mc I_{g+1}} \sum_{n=0}^{g-1} q^{(s-1)n} \sum_{f\in\A_{n}^{+}}\chi_{u}(f).
\end{align}
For a small $\varepsilon>0$ and $s\in\bar{X}_{\varepsilon}$, by Propositions \ref{S-C-Ramified-001}, \ref{N-S-C-Ramified-001} and Remark \ref{rem-s1}, we have
\begin{align}\label{proof-ramified-002}
\sum_{u\in\mc I_{g+1}}\sum_{n=0}^{g} q^{-sn} \sum_{f\in\A_{n}^{+}}\chi_{u}(f)
&= \sum_{u\in\mc I_{g+1}}\sum_{n=0}^{g} q^{-sn} \sum_{\substack{f\in\A_{n}^{+}\\ f = \square}}\chi_{u}(f)
+ \sum_{u\in\mc I_{g+1}}\sum_{n=0}^{g} q^{-sn} \sum_{\substack{f\in\A_{n}^{+}\\ f \ne \square}} \chi_{u}(f) \nonumber \\
&= 2 A_{g}(s) q^{2g+1} + \begin{cases}
O(g 2^{\frac{g}2} q^{(2-s)g}) & \text{ if ${\rm Re}(s) < s_{1}$,} \\
O(g q^{\frac{3g}2}) & \text{ if ${\rm Re}(s) \ge s_{1}$,}
\end{cases}
\end{align}
and
\begin{align}\label{proof-ramified-003}
q^{(1-2s)g} \sum_{u\in\mc I_{g+1}} \sum_{n=0}^{g-1} q^{(s-1)n} \sum_{f\in\A_{n}^{+}}\chi_{u}(f)
&= q^{(1-2s)g} \sum_{u\in\mc I_{g+1}} \sum_{n=0}^{g-1} q^{(s-1)n} \sum_{\substack{f\in\A_{n}^{+}\\ f = \square}}\chi_{u}(f) \nonumber \\
&\hspace{2em}+ q^{(1-2s)g} \sum_{u\in\mc I_{g+1}} \sum_{n=0}^{g-1} q^{(s-1)n} \sum_{\substack{f\in\A_{n}^{+}\\ f \ne \square}}\chi_{u}(f) \nonumber \\
&\hspace{-6em}= 2 B_{g}(s) q^{2g+1} + \begin{cases}
O(g 2^{\frac{g}2} q^{(2-s)g}) & \text{ if ${\rm Re}(s) < s_{1}$,} \\
O(g q^{\frac{3g}2}) & \text{ if ${\rm Re}(s) \ge s_{1}$.}
\end{cases}
\end{align}
By inserting \eqref{proof-ramified-002} and \eqref{proof-ramified-003} into \eqref{proof-ramified-001}, we have
\begin{align*}
\sum_{u\in\mc I_{g+1}} L(s,\chi_{u}) = 2 (A_{g}(s)+B_{g}(s)) q^{2g+1} + \begin{cases}
O(g 2^{\frac{g}2} q^{(2-s)g}) & \text{ if ${\rm Re}(s) < s_{1}$,} \\
O(g q^{\frac{3g}2}) & \text{ if ${\rm Re}(s) \ge s_{1}$,}
\end{cases}
\end{align*}
where $A_{g}(s)+B_{g}(s)$ equals to
\begin{align*}
\alpha_{g}(s) = \begin{cases}
\frac{{\rm P}(1)}{\zeta_{\A}(2)}\big(g+1+\frac{2}{\log q} \frac{{\rm P}'}{\rm P}(1)\big) & \text{ if $s=\frac{1}2$,} \vspace{0.5em} \\
\frac{\zeta_{\A}(2s)}{\zeta_{\A}(2)} \big\{{\rm P}(2s) - q^{(1-2s)(g+1)} {\rm P}(2-2s)   \\
 \hspace{3.5em} + {\rm P}(1)\big(q^{(1-2s)(g-[\frac{g-1}2])} - q^{(1-2s)([\frac{g}2]+1)}\big)\big\}
& \text{ if $s\in X_{\varepsilon}$,}\vspace{0.5em} \\
\frac{\zeta_{\A}(2s)}{\zeta_{\A}(2)} {\rm P}(2s) & \text{ if ${\rm Re}(s) \ge 1$.}
\end{cases}
\end{align*}

\subsection{Proof of Theorem \ref{mean-1} (2)}\label{subsect3-4}
By Lemma \ref{E-A-FE} (2), we have
\begin{align}\label{proof-real-001}
\sum_{u\in\mc F_{g+1}}L(s,\chi_{u})
&= \sum_{u\in\mc F_{g+1}}\sum_{n=0}^{g} q^{-sn} \sum_{f\in\A_{n}^{+}}\chi_{u}(f) \nonumber \\
&\hspace{2em}- q^{-(g+1)s} \sum_{u\in\mc F_{g+1}}\sum_{n=0}^{g} \sum_{f\in\A_{n}^{+}}\chi_{u}(f) + \sum_{u\in\mc F_{g+1}} H_{u}(s),
\end{align}
where
$$
\sum_{u\in\mc F_{g+1}} H_{u}(1) = \zeta_{\A}(2)^{-1} q^{-g} \sum_{u\in\mc F_{g+1}} \sum_{n=0}^{g-1} \left(g-n\right)
\sum_{f\in\A_{n}^{+}}\left\{\frac{u}{f}\right\}
$$
and, for $s\ne 1$,
\begin{align*}
\sum_{u\in\mc F_{g+1}} H_{u}(s)
&= q^{(1-2s)g} \eta(s) \sum_{u\in\mc F_{g+1}} \sum_{n=0}^{g-1} q^{(s-1)n}
\sum_{f\in\A_{n}^{+}}\left\{\frac{u}{f}\right\} - q^{-s g} \eta(s) \sum_{u\in\mc F_{g+1}} \sum_{n=0}^{g-1}
\sum_{f\in\A_{n}^{+}}\left\{\frac{u}{f}\right\}
\end{align*}
with $\eta(s) = \frac{\zeta_{\A}(2-s)}{\zeta_{\A}(1+s)}$.
For a small $\varepsilon>0$ and $s\in\bar{X}_{\varepsilon}$, by Propositions \ref{S-C-Real-001}, \ref{S-C-Real-002}, \ref{N-S-C-Real-001}
and Remark \ref{rem-s1}, we have
\begin{align}\label{proof-real-002}
\sum_{u\in\mc F_{g+1}}\sum_{n=0}^{g} q^{-sn} \sum_{f\in\A_{n}^{+}}\chi_{u}(f)
&= \sum_{u\in\mc F_{g+1}}\sum_{n=0}^{g} q^{-sn} \sum_{\substack{f\in\A_{n}^{+}\\ f = \square}} \chi_{u}(f)
+ \sum_{u\in\mc F_{g+1}}\sum_{n=0}^{g} q^{-sn} \sum_{\substack{f\in\A_{n}^{+}\\ f \ne \square}} \chi_{u}(f) \nonumber \\
&= A_{g}(s) q^{2g+2} + \begin{cases}
O(2^{\frac{g}2} q^{(2-s)g}) & \text{ if ${\rm Re}(s) < s_{1}$,} \\
O(g q^{\frac{3g}2}) & \text{ if ${\rm Re}(s) \ge s_{1}$,}
\end{cases}
\end{align}
and
\begin{align}\label{proof-real-003}
q^{-(g+1)s} \sum_{u\in\mc F_{g+1}}\sum_{n=0}^{g} \sum_{f\in\A_{n}^{+}}\chi_{u}(f)
&= q^{-(g+1)s} \sum_{u\in\mc F_{g+1}}\sum_{n=0}^{g} \sum_{\substack{f\in\A_{n}^{+}\\ f = \square}} \chi_{u}(f)  \nonumber  \\
&\hspace{2em}+ q^{-(g+1)s} \sum_{u\in\mc F_{g+1}}\sum_{n=0}^{g} \sum_{\substack{f\in\A_{n}^{+}\\ f \ne \square}}\chi_{u}(f) \nonumber  \\
&\hspace{-5em}= {\rm P}(1) q^{(2-s)g+[\frac{g}2]+2-s} + \begin{cases}
O(2^{\frac{g}2} q^{(2-s)g}) & \text{ if ${\rm Re}(s) < s_{1}$,} \\
O(g q^{\frac{3g}2}) & \text{ if ${\rm Re}(s) \ge s_{1}$.}
\end{cases}
\end{align}
If $s=1$, by Propositions \ref{S-C-Real-003} and \ref{N-S-C-Real-001} (4), we have
\begin{align}\label{proof-real-004}
\sum_{u\in\mc F_{g+1}} H_{u}(1)
&= \zeta_{\A}(2)^{-1} q^{-g} \sum_{u\in\mc F_{g+1}} \sum_{n=0}^{g-1} (g-n) \sum_{\substack{f\in\A_{n}^{+}\\ f = \square}}\chi_{u}(f) \nonumber \\
&\hspace{2em}+ \zeta_{\A}(2)^{-1} q^{-g} \sum_{u\in\mc F_{g+1}} \sum_{n=0}^{g-1} (g-n) \sum_{\substack{f\in\A_{n}^{+}\\ f \ne \square}}\chi_{u}(f)
= O\left(g q^{\frac{3g}2}\right).
\end{align}
By inserting \eqref{proof-real-002}, \eqref{proof-real-003} with $s=1$ and \eqref{proof-real-004} into \eqref{proof-real-001}, we have
\begin{align}
\sum_{u\in\mc F_{g+1}}L(1,\chi_{u}) &= {\rm P}(2) q^{2g+2}+ O\left(g q^{\frac{3g}2}\right).
\end{align}
Now, consider the case $s\ne 1$.
Let $\delta>0$ be arbitrary small.
For $s\in \bar{X}_{\varepsilon}$ with $|s-1|>\delta$, since $\eta(s)$ is bounded, by Propositions \ref{S-C-Real-001} and \ref{N-S-C-Real-001}, we have
\begin{align}\label{proof-real-005}
q^{(1-2s)g} \eta(s) \sum_{u\in\mc F_{g+1}} \sum_{n=0}^{g-1} q^{(s-1)n} \sum_{f\in\A_{n}^{+}}\left\{\frac{u}{f}\right\}
&= q^{(1-2s)g} \eta(s) \sum_{u\in\mc F_{g+1}} \sum_{n=0}^{g-1} q^{(s-1)n} \sum_{\substack{f\in\A_{n}^{+}\\ f = \square}}\left\{\frac{u}{f}\right\} \nonumber\\
&\hspace{2em}+ q^{(1-2s)g} \eta(s) \sum_{u\in\mc F_{g+1}} \sum_{n=0}^{g-1} q^{(s-1)n} \sum_{\substack{f\in\A_{n}^{+}\\ f \ne \square}}\left\{\frac{u}{f}\right\}\nonumber\\
&\hspace{-8em}= \eta(s) {B}_{g}(s) q^{2g+2} + \begin{cases}
O(2^{\frac{g}2} q^{(2-s)g}) & \text{ if ${\rm Re}(s) < s_{1}$,} \\
O(g q^{\frac{3g}2}) & \text{ if ${\rm Re}(s) \ge s_{1}$,}
\end{cases}
\end{align}
and, by Propositions \ref{S-C-Real-002} and \ref{N-S-C-Real-001},
\begin{align}\label{proof-real-006}
q^{-s g} \eta(s) \sum_{u\in\mc F_{g+1}} \sum_{n=0}^{g-1} \sum_{f\in\A_{n}^{+}}\left\{\frac{u}{f}\right\}
&= q^{-s g} \eta(s) \sum_{u\in\mc F_{g+1}} \sum_{n=0}^{g-1} \sum_{\substack{f\in\A_{n}^{+}\\ f = \square}}\left\{\frac{u}{f}\right\}  \nonumber\\
&\hspace{2em}+ q^{-s g} \eta(s) \sum_{u\in\mc F_{g+1}} \sum_{n=0}^{g-1} \sum_{\substack{f\in\A_{n}^{+}\\ f \ne \square}}\left\{\frac{u}{f}\right\}  \nonumber\\
&\hspace{-8em}= \eta(s) {\rm P}(1) q^{(2-s)g+[\frac{g-1}2]+2} + \begin{cases}
O(2^{\frac{g}2} q^{(2-s)g}) & \text{ if ${\rm Re}(s) < s_{1}$,} \\
O(g q^{\frac{3g}2}) & \text{ if ${\rm Re}(s) \ge s_{1}$.}
\end{cases}
\end{align}
By inserting \eqref{proof-real-002}, \eqref{proof-real-003}, \eqref{proof-real-005} and \eqref{proof-real-006}
into \eqref{proof-real-001}, we have
\begin{align}
\sum_{u\in\mc F_{g+1}}L(s,\chi_{u})
&= \beta_{g}(s) q^{2g+2} + \begin{cases}
O(2^{\frac{g}2} q^{(2-s)g}) & \text{ if ${\rm Re}(s) < s_{1}$,} \\
O(g q^{\frac{3g}2}) & \text{ if ${\rm Re}(s) \ge s_{1}$.}
\end{cases}
\end{align}

\subsection{Proof of Theorem \ref{mean-1} (3)}\label{subsect3-5}
Note that $\chi_{u}(f) = (-1)^{\deg(f)} \chi_{v}(f)$ for $u = v+\xi \in \mc F'_{g+1}$ with $v\in\mc F_{g+1}$.
Then, by Lemma \ref{E-A-FE} (3), we have
\begin{align}\label{proof-inert-001}
\sum_{u\in\mc F'_{g+1}} L(s,\chi_{u})
&= \sum_{u\in\mc F_{g+1}}\sum_{n=0}^{g} (-1)^{n} q^{-sn} \sum_{f\in\A_{n}^{+}}\chi_{u}(f)
+ (-1)^{g} q^{-(g+1)s} \sum_{u\in\mc F_{g+1}}\sum_{n=0}^{g} \sum_{f\in\A_{n}^{+}}\chi_{u}(f) \nonumber  \\
&\hspace{1em}+ \nu(s) q^{(1-2s)g} \sum_{u\in\mc F_{g+1}}
\sum_{n=0}^{g-1} (-1)^{n} q^{(s-1)n}\sum_{f\in\A_{n}^{+}}\chi_{u}(f) \nonumber  \\
&\hspace{1em}+ (-1)^{g+1} \nu(s) q^{-sg} \sum_{u\in\mc F_{g+1}}\sum_{n=0}^{g-1} \sum_{f\in\A_{n}^{+}}\chi_{u}(f)
\end{align}
with $\nu(s) = \frac{1+q^{-s}}{1+q^{s-1}}$.
For a small $\varepsilon>0$ and $s\in\bar{X}_{\varepsilon}$, by Propositions \ref{S-C-Real-001} and \ref{N-S-C-Real-001}, we have
\begin{align}\label{proof-inert-002}
\sum_{u\in\mc F_{g+1}}\sum_{n=0}^{g} (-1)^{n} q^{-sn} \sum_{f\in\A_{n}^{+}}\chi_{u}(f)
&= \sum_{u\in\mc F_{g+1}}\sum_{n=0}^{g} (-1)^{n} q^{-sn} \sum_{\substack{f\in\A_{n}^{+}\\ f = \square}}\chi_{u}(f)  \nonumber \\
&\hspace{2em}+ \sum_{u\in\mc F_{g+1}}\sum_{n=0}^{g} (-1)^{n} q^{-sn} \sum_{\substack{f\in\A_{n}^{+}\\ f \ne \square}} \chi_{u}(f) \nonumber \\
&= {A}_{g}(s) q^{2g+2}+ \begin{cases}
O(2^{\frac{g}2} q^{(2-s)g}) & \text{ if ${\rm Re}(s) < s_{1}$,} \\
O(g q^{\frac{3g}2}) & \text{ if ${\rm Re}(s) \ge s_{1}$,}
\end{cases}
\end{align}
and
\begin{align}\label{proof-inert-003}
& \nu(s) q^{(1-2s)g} \sum_{u\in\mc F_{g+1}} \sum_{n=0}^{g-1} (-1)^{n} q^{(s-1)n}\sum_{f\in\A_{n}^{+}}\chi_{u}(f) \nonumber \\
&\hspace{1em}= \nu(s) q^{(1-2s)g} \sum_{u\in\mc F_{g+1}} \sum_{n=0}^{g-1} (-1)^{n} q^{(s-1)n}\sum_{\substack{f\in\A_{n}^{+}\\ f = \square}}\chi_{u}(f) \nonumber \\
&\hspace{3em}+ \nu(s) q^{(1-2s)g} \sum_{u\in\mc F_{g+1}} \sum_{n=0}^{g-1} (-1)^{n} q^{(s-1)n}\sum_{\substack{f\in\A_{n}^{+}\\ f \ne \square}}\chi_{u}(f) \nonumber \\
&\hspace{1em}= \nu(s) {B}_{g}(s) q^{2g+2} + \begin{cases}
O(2^{\frac{g}2} q^{(2-s)g}) & \text{ if ${\rm Re}(s) < s_{1}$,} \\
O(g q^{\frac{3g}2}) & \text{ if ${\rm Re}(s) \ge s_{1}$.}
\end{cases}
\end{align}
By Propositions \ref{S-C-Real-002} and \ref{N-S-C-Real-001}, we have
\begin{align}\label{proof-inert-004}
(-1)^{g} q^{-(g+1)s} \sum_{u\in\mc F_{g+1}}\sum_{n=0}^{g} \sum_{f\in\A_{n}^{+}}\chi_{u}(f)
&= (-1)^{g} q^{-(g+1)s} \sum_{u\in\mc F_{g+1}}\sum_{n=0}^{g} \sum_{\substack{f\in\A_{n}^{+}\\ f = \square}}\chi_{u}(f) \nonumber \\
&\hspace{2em}+ (-1)^{g} q^{-(g+1)s} \sum_{u\in\mc F_{g+1}}\sum_{n=0}^{g} \sum_{\substack{f\in\A_{n}^{+}\\ f \ne \square}}\chi_{u}(f) \nonumber \\
&\hspace{-8em}= (-1)^{g} {\rm P}(1) q^{(2-s)g+[\frac{g}2]+2-s} + \begin{cases}
O(2^{\frac{g}2} q^{(2-s)g}) & \text{ if ${\rm Re}(s) < s_{1}$,} \\
O(g q^{\frac{3g}2}) & \text{ if ${\rm Re}(s) \ge s_{1}$,}
\end{cases}
\end{align}
and
\begin{align}\label{proof-inert-005}
(-1)^{g+1} \nu(s) q^{-sg} \sum_{u\in\mc F_{g+1}}\sum_{n=0}^{g-1} \sum_{f\in\A_{n}^{+}}\chi_{u}(f)
&= (-1)^{g+1} \nu(s) q^{-sg} \sum_{u\in\mc F_{g+1}}\sum_{n=0}^{g-1} \sum_{\substack{f\in\A_{n}^{+}\\ f = \square}}\chi_{u}(f) \nonumber \\
&\hspace{2em}+ (-1)^{g+1} \nu(s) q^{-sg} \sum_{u\in\mc F_{g+1}}\sum_{n=0}^{g-1} \sum_{\substack{f\in\A_{n}^{+}\\ f = \square}}\chi_{u}(f)  \nonumber \\
&\hspace{-12em}= (-1)^{g+1} \nu(s) {\rm P}(1) q^{(2-s)g+[\frac{g-1}2]+2} + \begin{cases}
O(g 2^{\frac{g}2} q^{(2-s)g}) & \text{ if ${\rm Re}(s) < s_{1}$,} \\
O(g q^{\frac{3g}2}) & \text{ if ${\rm Re}(s) \ge s_{1}$.}
\end{cases}
\end{align}
By inserting \eqref{proof-inert-002}, \eqref{proof-inert-003}, \eqref{proof-inert-004}
and \eqref{proof-inert-005} into \eqref{proof-inert-001}, we have
\begin{align*}
\sum_{u\in\mc F'_{g+1}}L(s,\chi_{u}) &=  \gamma_{g}(s) q^{2g+2} + \begin{cases}
O(2^{\frac{g}2} q^{(2-s)g}) & \text{ if ${\rm Re}(s) < s_{1}$,} \\
O(g q^{\frac{3g}2}) & \text{ if ${\rm Re}(s) \ge s_{1}$.}
\end{cases}
\end{align*}

\begin{rem}
With the results in \S\ref{S3-1} and \S\ref{S3-2}, we can modify the proofs in \S\ref{S-4}
by replacing $u\in \mc I_{g+1}$ (resp. $\mc F_{g+1}$, $\mc F'_{g+1}$) by $D\in\mc H_{2g+1}$ (resp. $\mc H_{2g+2}$, $\gamma \mc H_{2g+2}$),
and $\{\frac{u}{f}\}$ by $(\frac{D}{f})$ to show that Theorem \ref{mean-1} is also true in odd characteristic.
Here $\mc H_{n}$ is the set of all monic square-free polynomials of degree $n$, and $\gamma$ is a generator of $\bF_{q}^{\times}$.
\end{rem}

\begin{bibdiv}
\begin{biblist}

\bib{An12}{article}{
   author={Andrade, Julio},
   title={A note on the mean value of $L$-functions in function fields},
   journal={Int. J. Number Theory},
   volume={8},
   date={2012},
   number={7},
   pages={1725--1740},
}

\bib{AK12}{article}{
   author={Andrade, J. C.},
   author={Keating, J. P.},
   title={The mean value of $L(\frac12,\chi)$ in the hyperelliptic ensemble},
   journal={J. Number Theory},
   volume={132},
   date={2012},
   number={12},
   pages={2793--2816},
}

\bib{AK13}{article}{
   author={Andrade, Julio C.},
   author={Keating, Jonathan P.},
   title={Mean value theorems for $L$-functions over prime polynomials for
   the rational function field},
   journal={Acta Arith.},
   volume={161},
   date={2013},
   number={4},
   pages={371--385},
}

\bib{AK14}{article}{
   author={Andrade, J. C.},
   author={Keating, J. P.},
   title={Conjectures for the integral moments and ratios of $L$-functions
   over function fields},
   journal={J. Number Theory},
   volume={142},
   date={2014},
   pages={102--148},
}

\bib{Ch08}{article}{
   author={Chen, Yen-Mei J.},
   title={Average values of $L$-functions in characteristic two},
   journal={J. Number Theory},
   volume={128},
   date={2008},
   number={7},
   pages={2138--2158},
}

\bib{CY08}{article}{
   author={Chen, Yen-Mei J.},
   author={Yu, Jing},
   title={On class number relations in characteristic two},
   journal={Math. Z.},
   volume={259},
   date={2008},
   number={1},
   pages={197--216},
}

\bib{Gauss}{book}{
   author={Gauss, C. F.},
   title={Disquisitiones Arithmeticae},
   publisher={Yale University Press},
   date={1966},
}

\bib{GH85}{article}{
   author={Goldfeld, Dorian},
   author={Hoffstein, Jeffrey},
   title={Eisenstein series of ${1\over 2}$-integral weight and the mean value of real Dirichlet $L$-series},
   journal={Invent. Math.},
   volume={80},
   date={1985},
   number={2},
   pages={185--208},
}

\bib{Ha34}{article}{
   author={Hasse, Helmut},
   title={Theorie der relativ-zyklischen algebraischen Funktionenk\"orper, inbesondre bei endlichen Konstantenk\"orper},
   journal={J. Reine Angew. Math.},
   volume={172},
   date={1934},
   pages={37--54},
}

\bib{HR92}{article}{
   author={Hoffstein, Jeffrey},
   author={Rosen, Michael},
   title={Average values of $L$-series in function fields},
   journal={J. Reine Angew. Math.},
   volume={426},
   date={1992},
   pages={117--150},
}

\bib{HL10}{article}{
   author={Hu, Su},
   author={Li, Yan},
   title={The genus fields of Artin-Schreier extensions},
   journal={Finite Fields Appl.},
   volume={16},
   date={2010},
   number={4},
   pages={255--264},
}

\bib{Ju13}{article}{
   author={Jung, Hwanyup},
   title={Note on the mean value of $L(\frac{1}2,\chi)$ in the hyperelliptic
   ensemble},
   journal={J. Number Theory},
   volume={133},
   date={2013},
   number={8},
   pages={2706--2714},
}

\bib{Ju14}{article}{
   author={Jung, Hwanyup},
   title={A note on the mean value of $L(1,\chi)$ in the hyperelliptic
   ensemble},
   journal={Int. J. Number Theory},
   volume={10},
   date={2014},
   number={4},
   pages={859--874},
}

\bib{Ju81}{article}{
   author={Jutila, M.},
   title={On the mean value of $L({1\over 2},\,\chi )$ for real characters},
   journal={Analysis},
   volume={1},
   date={1981},
   number={2},
   pages={149--161},
}

\bib{Ro02}{book}{
   author={Rosen, Michael},
   title={Number theory in function fields},
   series={Graduate Texts in Mathematics},
   volume={210},
   publisher={Springer-Verlag},
   place={New York},
   date={2002},
   pages={xii+358},
}

\bib{RW15}{article}{
   author={Rubinstein, Michael O.},
   author={Wu, Kaiyu},
   title={Moments of zeta functions associated to hyperelliptic curves over finite fields},
   journal={Phil. Trans. R. Soc.},
   volume={373},
   date={2015},
   pages={20140307},
}

\bib{Si44}{article}{
   author={Siegel, C. L.},
   title={The average measure of quadratic forms with given determinant and signature},
   journal={Ann. Math.},
   volume={45},
   date={1944},
   pages={667--685},
}

\bib{St93}{book}{
   author={Stichtenoth, Henning},
   title={Algebraic function fields and codes},
   series={Universitext},
   publisher={Springer-Verlag, Berlin},
   date={1993},
}

\end{biblist}
\end{bibdiv}

\end{document}